\newtheorem{theorem}{Theorem}[section]
\newtheorem{lemma}[theorem]{Lemma}
\newtheorem{prop}[theorem]{Proposition}
\newtheorem{cor}[theorem]{Corollary}
\newtheorem{remark}[theorem]{Remark}
\newtheorem{rp}[theorem]{Remark and Problem}
\newtheorem{re}[theorem]{Remark and Example}
\newtheorem{defi}[theorem]{Definition}
\newtheorem{problem}[theorem]{Problem}
\newtheorem{pe}[theorem]{Problem and Example}
\numberwithin{equation}{section}
\newcommand{\EE}{\mathbb{E}}
\newcommand{\R}{\mathbb{R}}
\newcommand{\Z}{\mathbb{Z}}
\newcommand{\N}{\mathbb{N}}
\newcommand{\PP}{\mathbb{P}}
\newcommand{\wom}{\widetilde{\om}}
\renewcommand{\tilde}{\widetilde}
\newcommand{\M}{\mathcal{M}}
\newcommand{\F}{\mathcal{F}}
\newcommand{\la} {\lambda}
\newcommand{\si}{\sigma}
\newcommand{\al}{\alpha}
\newcommand{\ga}{\gamma}
\newcommand{\om}{\omega}
\newcommand{\Om}{\Omega}
\newcommand{\eps}{\varepsilon}
\renewcommand{\epsilon}{\varepsilon}
\newcommand{\1}{\mathbf{1}}
\newcommand{\won}{{\boldsymbol 1}}
\newcommand{\cal}{\mathcal}
\newcounter{constante}
\newcommand{\con}[1]{
\immediate\write 1{\noexpand\newlabel{#1}{{\theconstante}{\theconstante}}}
                    c_{\theconstante}
                    \stepcounter{constante}
                   }
\begin{document}

\setcounter{page}{1}

\title[Excited random walks] {Excited random walks:\\ results, methods, open problems}

\author{Elena Kosygina and Martin P.W.\ Zerner} 

\thanks{\textit{2000
Mathematics Subject Classification.} 60K35, 60K37,
  60J80.}
\thanks{\textit{Key words:}\quad 
excited random
walk, cookie walk, recurrence, transience, zero-one laws, law of large
numbers, limit theorems, random environment, regeneration structure.}

\begin{verse}
  \makebox[5cm]{\ }\hfill\textit{Dedicated to Professor S.R.S.\
  Varadhan}\\ \makebox[53.5mm]{\ }\textit{on the occasion of his 70-th birthday.}
\end{verse}\vspace*{10mm}

\begin{abstract}
  We consider a class of self-interacting random walks in
  deterministic or random environments, known as excited random walks
  or cookie walks, on the $d$-dimensional integer lattice. The main
  purpose of this paper is two-fold: to give a survey of known
  results and some of the methods and  to present several new
  results. The latter include functional limit theorems for transient
  one-dimensional excited random walks in bounded i.i.d.\ cookie
  environments as well as some zero-one laws. Several open problems
  are stated.
\end{abstract}
\maketitle

\pagestyle{myheadings}
\markboth{Excited random walks}{E.\ Kosygina, M.P.W.\ Zerner}

\section{Model description}

Random walks (RWs) and their scaling limits are probably the most widely
known and frequently used stochastic processes in probability theory,
mathematical physics, and applications. Studies of a RW  in a
random medium are an attempt to understand which macroscopic effects
can be seen and modeled by subjecting the RW's dynamics on a
microscopic level to various kinds of noise, for example, allowing it
to interact with a random environment or its own history (through path
restrictions, reinforcement, excitation etc.).

The Markov, or memory-less, property of simple  RWs lies at the
heart of the classical approach to these and much more general
processes. But it also imposes a restriction on the applicability of
such models, as many real life processes certainly have memory. Over
the last decades several types of non-markovian RWs appeared
in the literature and became active areas of research; for example,
self-avoiding RWs, edge or vertex reinforced RWs, and
excited RWs (ERWs), also known as ``cookie
walks''.\footnote{According to Itai Benjamini [personal
  communication], the name ``excited random walk'' for the model
  studied in the seminal paper \cite{BW03} was suggested by Oded
  Schramm. The notion of ``cookies'' in this context was introduced
  later in \cite{Zer05}.}  The reader interested in the first two
types of models is referred to the surveys \cite{T99} and \cite{P07}. At the
time when \cite{P07} was written, ERWs had just appeared and for
this reason were only briefly mentioned \cite[p.\,51]{P07}.

The main purpose of this article is two-fold. First, we give a survey of
results and some of the methods concerning ERWs. Second, we include
several new theorems, see e.g.\ Theorems \ref{R2}, \ref{kal2},
\ref{1to2}, and \ref{2up}.  We also state several open problems. While
we aim at presenting all major results known for ERWs on $\Z^d, d\ge
1,$ the choice of methods explained in some detail has been influenced
by our personal preferences and contributions to this area.

Primarily, we shall be concerned with discrete time ERWs on $\Z^d$,
$d\ge 1,$ even though the construction can be readily extended to
other graphs (see \cite{V03,DGDMP08,BS09} for trees, \cite{Zer06,
  Do11} for strips) or continuous time-space processes (see
\cite{RS11, RS} for so-called excited Brownian motion). Broadly
speaking, one considers a certain underlying and presumably
well-understood process and modifies its dynamics for the first few
visits to each site. These modifications can be thought of as
stacks of {\em cookies} placed on each site of the lattice. Each
cookie encodes a probability distribution on $\Z^d$. The walker
consumes a cookie at his current location and makes a move according
to the distribution prescribed by that cookie. Upon reaching a site
where all cookies have already been eaten or were not there to begin
with, the walker makes a move in accordance with the original underlying
dynamics.  Below we consider processes whose underlying dynamics is
the simple symmetric RW and for which the cookie stacks are
random themselves (i.i.d.\ or stationary and ergodic) and induce
transitions between nearest neighbors. (For a model with cookies
inducing (long-range) non-nearest neighbor transitions see
\cite{KRS}.)

We start with the description of a relatively general model which
allows infinite cookie stacks.  Let $\mathcal E:=\{\pm e_j\mid
j\in\{1,2,\dots,d\}\}$ be the set of unit coordinate vectors in $\Z^d$
and denote by ${\cal M}_{\mathcal E}$ the set of probability measures
on $\mathcal E$, i.e.\ vectors with $2d$ non-negative entries which sum
up to 1. Such vectors are called \textit{cookies}.
The set of cookie environments is denoted by 
\[  \Omega:=\cal M_{\mathcal E}^{ \Z^d\times \N}.\]
(Here $\N=\{1,2\ldots\}$.)  The elements of $\Omega$ are written as
$\omega=(\omega(z,e,i))_{z\in\Z^d,e\in \mathcal E,i\in\N}$ with
$(\omega(z,e,i))_{e\in \mathcal E}$ being the $i$-th cookie at $z$. It
is consumed by the walker upon the $i$-th visit to $z$, if there is
such a visit, and provides the walker with the transition
probabilities from $z$ to $z+e$ in the next step.  More precisely, for
fixed $\omega\in\Omega$ and $x\in\Z^d$ an ERW starting at $x$ in the
environment $\omega$ is a process $X:=(X_n)_{n\ge 0}$ on a suitable
probability space $(\Om',\F',P_{x,\omega})$ which satisfies
\begin{eqnarray}\label{top}
P_{x,\omega}[X_0=x]&=&1 \quad\mbox{and}\\
P_{x,\omega}\left[X_{n+1}=X_n+e \mid (X_{i})_{0\le i\le n}\right]&=&
\omega\left(X_n,e,\#\{i\in \{0,1,\dots,n\} \mid X_i=X_n\}\right)
\nonumber
\end{eqnarray}
for all $n\in\N_0=\N\cup\{0\}$ and $e\in \mathcal E$. Here $\#A$
denotes the cardinality of the set $A$.

The cookie environment $\omega$ may be chosen at random itself
according to a probability measure $\PP$ on $(\Omega,\F)$, where $\F$
is the canonical product Borel $\si$-algebra.  Averaging the so-called
{\em quenched} measure $P_{x,\om}$ over the environment $\om$ we
obtain the {\em averaged} (often also called {\em annealed}) measure
$P_x[\cdot]:=\EE[P_{x,\om}[\cdot]]$ on $\Om\times\Om'$.  The
expectation operators corresponding to $P_{x,\om}, \PP,$ and $P_x$ are
denoted by $E_{x,\om}, \EE,$ and $E_x$, respectively.

A common assumption about $\PP$ will be:
\begin{equation*}
\mbox{the family $(\omega(z,\cdot,\cdot))_{z\in\Z^d}$ of cookie stacks is i.i.d.\ under
$\PP$.}
\tag{IID}
\end{equation*}
A weaker condition is:
\begin{equation*}
\mbox{\begin{tabular}{l}the family $(\omega(z,\cdot,\cdot))_{z\in\Z^d}$ is under
$\PP$ stationary \\
 and ergodic\footnotemark\ with respect to the shifts on $\Z^d$.
\end{tabular}}\tag{SE}
\end{equation*}
\footnotetext{i.e.\ every $A\in\F$ which is invariant under all shifts on $\Z^d$ satisfies $\PP[A]\in\{0,1\}$.}

To escape degenerate situations we shall often assume one of the
following \textit{ellipticity conditions}, called weak ellipticity,
ellipticity, and uniform ellipticity.
\begin{equation*}
 \mbox{For all $z\in\Z^d, e\in\mathcal E$:\ }\  
\PP\left[\forall i\in\N:\ \om(z,e,i)>0\right]>0.\tag{WEL}
\end{equation*}
\[  \mbox{For all $z\in\Z^d, e\in\mathcal E$ and $i\in\N:\ \PP$-a.s.\ $\om(z,e,i)>0$.}\tag{EL}
\]
\begin{equation*}
  \mbox{There is $\kappa>0$ such that for all $z\in\Z^d, e\in\mathcal E$ and $i\in\N:\ \PP$-a.s.\ $\om(z,e,i)\ge \kappa$.}\tag{UEL}
\end{equation*}
Obviously, (UEL)$\Rightarrow$(EL)$\Rightarrow$(WEL).

If $\omega \in\Omega$ is such that $\omega(z,e,i)$ does not depend on
$z,\ e$, and $i$ then necessarily $\omega(z,e,i)=1/(2d)$ and hence $X$
is, under $P_{x,\omega}$, the simple symmetric RW on $\Z^d$ starting
at $x$. Since the consumption of a cookie $(\om(z,e,i))_{e\in \mathcal
  E}=(1/(2d))_{e\in \mathcal E}$ does not change the dynamics of the
simple symmetric RW, which is our underlying process, such a cookie
will be called a \textit{placebo cookie}.

If we allow dependence of $\om(z,e,i)$ on $e$ but neither
on $z$ nor $i$ we obtain all  nearest neighbor random
walks on $\Z^d$ starting at $x$  whose increments are i.i.d..

If we let $\omega(z,e,i)$ depend on $e$ and $z$ but still not on $i$
then we get all nearest neighbor Markov chains on $\Z^d$. Choosing
$\omega\in\Omega$ at random (assuming usually (IID) or (SE)) yields
all nearest neighbor \textit{RWs in random environments} (RWRE) on
$\Z^d$, see e.g.\ \cite{Zei04, S04}. (However, note that although RWRE
can be thought of as a special case of ERW, the present paper does not
attempt to survey RWRE results.)

All these processes are markovian.  However, as soon as $\om(z,e,i)$
starts depending on $i$, $X$ loses the Markov property. The walker's
behavior may depend on the number of visits to his current
location.
Such class of processes  seems to be too general to be
considered in a coherent way and might not be related anymore to any
well-understood underlying process. Thus one has to impose further
conditions on $\PP$.  

One way to re-establish the connection to the underlying process, in
our case to the simple symmetric RW, is to limit the number of
non-placebo cookies per site. To this purpose, we denote by
\begin{equation}\label{MM}
M(z):=\inf\{j\in\N_0\mid \forall e\in \mathcal E\ \forall i>j:
\omega(z,e,i)=1/(2d)\}
\end{equation}
the number of cookies at site $z\in\Z^d$, if we do not count placebo
cookies which are only followed by more placebo cookies.  (Here
$\inf\emptyset=\infty$.)  Note that under assumption (IID) (resp.\
(SE)) $M(z),\ z\in\Z^d,$ is an i.i.d.\ (resp.\ stationary and ergodic)
family of random variables. Again, if $M\equiv 0$ then $(X_n)_{n\ge
  0}$ is the simple symmetric RW, whereas for RWRE we have $M(z)\in\{0,\infty\}$.

 In the next four short subsections we present the most
commonly studied measures $\PP$ on $\Om$.

\subsection {The original ERW model  and a modification (no
  excitation after the first visit)}\label{ss11}
ERWs were introduced by Benjamini and Wilson in \cite{BW03} for $\Z^d,
d\ge 1$, with emphasis on $d\ge 2$. There the walker gets a bias in
the direction $e_1$ upon the first visit to a site, whereas upon
subsequent visits to the same site he jumps to a uniformly chosen
neighbor.  More precisely, in the notation introduced above, there is
a $p\in(1/2,1]$ such that $\PP=\delta_\om$, where
for all $z\in\Z^d$, 
\[\begin{array}{l}
  {\displaystyle \omega(z,e_1,1)=\frac{p}{d},\quad
  \omega(z,-e_1,1)=\frac{1-p}{d},\ \text{and }}\vspace*{1mm}\\
  {\displaystyle \omega(z,e,i)=\frac{1}{2d} \quad \text{if }\ i\in\N \text{ and
  }e\in{\cal E}\setminus\{e_1,-e_1\} \ \ \text{or \ \ if }\ i\ge
  2.}
\end{array}\tag{BW}
\]
In this special environment $\om$ one can compare the ERW with a
simple RW $(Y_n)_{n\ge 0}$ by coupling these processes so
that $(X_n-Y_n)\cdot e_1$ is non-decreasing in $n$ and $X_n\cdot
e_i=Y_n\cdot e_i$ for all $i=2,\ldots,d$ and $n\ge 0$.

A natural extension of this model is obtained by fixing an arbitrary
direction $\ell\in\R^d\backslash\{0\}$ and assuming that all the
cookies induce a drift in that direction.
In \cite[Th.\ 1.2]{MPRV} the setting is extended to measures $\PP$
which satisfy (IID), (UEL), and for which there is an
$\ell\in\R^d\backslash\{0\}$ such that
\[\begin{array}{l}
{\displaystyle \exists \la>0: 
\sum_{e\in{\cal E}}\omega(0,e,1)\ e\cdot \ell\ge \la\quad \mbox{$\PP$-a.s.\ and}}\\
{\displaystyle
   \omega(0,e,i)=\om(0,-e,i)\ \ \text{$\PP$-a.s.\ for all }e\in\mathcal E,\ i\ge 2.}
\end{array}
\tag{MPRV$_{\ell}$} 
\]

\subsection{Any number of $\ell$-positive
  cookies per site}\label{1.3}
One way to further generalize the models described in Section
\ref{ss11} is to lift the restriction on the number of non-placebo
cookies, still requiring each cookie to induce a non-negative drift in the
same general direction.  Such a model was introduced in \cite{Zer05}
for $\Z$ and in \cite{Zer06} for $\Z^d, d\ge 2$ (and for strips as a
graph in between $\Z$ and $\Z^2$). It is assumed that $\PP$ satisfies
(SE) if $d=1$ and (IID) and (UEL) if $d\ge 2$ and that
for some $\ell\in \R^d\backslash\{0\},$
\begin{equation*}
  \sum_{e\in\mathcal E}\omega(0,e,i)\ e\cdot\ell\ge 0\quad\mbox{ $\PP$-a.s.\ for all $i\in\N$.}\tag{POS$_\ell$}
\end{equation*}
A useful fact in this setting is that $(X_n\cdot\, \ell)_{n\ge 0}$ is
a $P_{0,\omega}$-submartingale. 

Cookies which satisfy the inequality
in (POS$_\ell$) are called ($\ell$-)\textit{positive}. (Negativity is
defined by the opposite inequality.)

\subsection{Boundedly many positive or negative
  cookies per site ($d=1$)}\label{1.4}
This model was introduced in \cite{BS08a, KZ08}. 
Assume (IID), (WEL), and 
that the number of non-placebo cookies and their positions within the cookie stacks are bounded, i.e.
\begin{equation*}
 \mbox{there is a deterministic $M\in\N$ such that for all $z\in\Z^d$: $\PP$-a.s.\ $M(z)\le M$.
}\tag{BD} 
\end{equation*}
This model is probably the most studied. Currently, there is a
rather complete picture, which includes  criteria for
recurrence and transience, laws of large numbers, ballisticity,
functional limit theorems, and large deviations.

This model can also be considered in higher dimensions, but so far
there has been little progress for $\Z^d$, $d\ge 2$.  
Practically all work that is done on this model for $d=1$ uses a
connection with branching processes with migration. This connection
allows to translate main questions for this non-markovian model into
questions about branching processes, which are markovian. The
branching process approach was also useful in considering ERW on trees
(see \cite{BS09}) but it completely breaks down on $\Z^d$,
$d\ge 2$.

\subsection{RW perturbed at extrema ($d=1$)}\label{ext}
One of the historically first studied non-markovian RWs which fits
into the above setting is the so-called {\em RW perturbed at
  extrema}, see e.g.\ \cite{D99} and the references therein. This is a
nearest-neighbor process $(Y_n)_{n\ge 0}$ on $\Z$ which starts at 0
and satisfies
\[P[Y_{n+1}=Y_n+1\mid (Y_i)_{0\le i\le n}]=\left\{\begin{array}{ll}
p&\mbox{if  $n\ge 1$ and $Y_n=\max_{i\le n}Y_i$,}\\
q&\mbox{if  $n\ge 1$ and $Y_n=\min_{i\le n}Y_i$,}\\
1/2 &\mbox{otherwise}
\end{array}\right.\]
for given $p,q\in(0,1]$.  It has been noted in \cite[Sec.\ 1]{BV} that
this walk can be viewed as an ERW under the averaged measure as
follows: Let $(M_z)_{z\in\Z}$ be independent random variables w.r.t.\
some probability measure $P$ such that $P[M_z=k]=p(1-p)^{k-1}$ for
$k,z\ge 1$, $P[M_z=k]=q(1-q)^{k-1}$ for $k\ge 1, z\le -1$ and
$M_0\equiv 0$.  Set for all $z\in\Z$ and $i\in\N$,
\[\wom(z,1,i)=\left\{\begin{array}{cl}
1&\mbox{if $z<0$ and $i<M_z$ or if $z>0$ and $i=M_z$,}\\
0&\mbox{if $z<0$ and $i=M_z$ or if $z>0$ and $i<M_z$,}\\
1/2&\mbox{if $i>M_z$}
\end{array}\right.
\]
and let $\PP$ be the distribution of $\wom$ on $\Omega$. (Note that
$(M_z)_{z\in\Z}$ and $(M(z))_{z\in\Z}$ as defined in (\ref{MM}) have
the same distribution unless $p=1/2$ or $q=1/2$.) Then the ERW $X$ has
under $P_0$ the same distribution as $(Y_n)_{n\ge 0}$. However, from
the point of view of ERW as introduced above this measure $\PP$ seems
a bit unnatural since it does not satisfy (SE). Therefore, this model
will not play an important role in this paper.
For a variant of this model which does satisfy (SE) see
\cite{P10} and Remark \ref{haveit}. 
\vspace*{5mm}

Let us describe how the present paper is
organized. At the end of this section we introduce some notation.  In
Section \ref{rara} we deal with the basic question when the range of
the ERW is finite and when it is infinite.  This will be useful in Section
\ref{srt}, which considers recurrence and transience. There we discuss
zero-one laws for (directional) recurrence and transience and collect
the known criteria which distinguish between these two cases.  In
Section \ref{slln} we establish strong laws of large numbers, i.e.\
the existence of a $P_0$-a.s.\ limit $v$ of $X_n/n$ as
$n\to\infty$. Section \ref{sbal} is devoted to the question whether
$v=0$ or $v\ne 0$. We also discuss in this section in more detail the
connection to branching processes with migration mentioned above,
which is a useful tool when $d=1$. In Section
\ref{flt} we consider one-dimensional and functional limit laws for
convergence in distribution. In the final section we quote some
results which did not fit into any of the previous sections.

Sections \ref{srt}-\ref{flt} are divided into two subsections
each. The first subsection deals with the one-dimensional situation, the
second one considers the multi-dimensional case.
\vspace*{5mm}

{\bf Notation.} For $k\in\Z$ we set
$T_k:=\inf\{n\ge 0\mid X_n=k\}$.
We need some notation for the environment which is
left over after the ERW has eaten some of its cookies. For any
$\om\in\Om$, $J\in\N_0\cup\{\infty\}$ and $(x_j)_{j<J}\in(\Z^d)^J$,
\[\psi(\om,(x_j)_{j< J})(z,e,i):=\om(z,e,i+\#\{0\le j<J \mid x_j=z\})\]
defines the environment $\psi(\om,(x_j)_{j< J})$ obtained from $\om$
by following the path $(x_j)_{j<J}$ and removing the currently first
cookie each time a site is visited. In this context we sometimes need  an independent copy of the ERW $X$, which we denote by $X'$.
 By $U\stackrel{\rm d}{=}V$ we
mean that $U$ and $V$ have the same distribution. Convergence in distribution is denoted by $\Rightarrow$. Saying that
$A\subseteq B$ $P$-a.s.\ means $P[A\backslash B]=0$. We write
$E[Z,A]:=E[Z\cdot \won_A]$ for random variables $Z$ and events $A$.
The integer part of $t\in\R$ is denoted by $[t]$.
\section{Finite or infinite range}\label{rara}
To the best of our knowledge the question whether the range $\{X_n\mid n\ge 0\}$ of the ERW is finite or infinite has not been considered in the literature yet.
In this section we give necessary and sufficient criteria for the range to be finite. In particular, under natural conditions the range is either $P_0$-a.s.\ finite or  $P_0$-a.s.\ infinite. This will be useful in Section \ref{srt}. First, we introduce some notation.
\begin{defi}\label{too}{\rm
For $x\in\Z^d$ and $e\in\mathcal E$ 
write
$x\stackrel{\om}{\to} x+e$ if and only if $\sum_{i\ge 1}\om(x,e,i)=\infty.$
Define $b_F:=
\PP[\forall e\in F: 0\not \stackrel{\om}{\to}e]$ for $F\subseteq\mathcal E$.
For $e\in\mathcal E$ write $b_e$ instead of $b_{\{e\}}$.
The transitive closure  in $\Z^d$ of the relation $\stackrel{\om}{\to}$ 
is denoted by $\stackrel{\om}{\to}$ as well. 
Moreover, $\mathcal C_x:=\{y\in\Z^d\mid x\stackrel{\om}{\to}y\}$. 
}
\end{defi}
The meaning of $x\stackrel{\om}{\to}y$ is illustrated by the following lemma, which follows from the Borel-Cantelli lemma.
\begin{lemma} \label{io}{\bf ($d\ge 1$)}
Let $\om\in\Om$ and  $x,y\in\Z^d$ with  $x\stackrel{\om}{\to}y$.
Then on the event that the ERW visits $x$ infinitely often, $y$ is $P_{0,\om}$-a.s.\ visited infinitely often as well.
\end{lemma}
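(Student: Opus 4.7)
The plan is to first reduce to the one-step case and then apply a conditional Borel-Cantelli argument along successive visits to $x$.

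First I would observe that it suffices to prove the lemma when $y=x+e$ for some $e\in\mathcal E$ with $\sum_{i\ge 1}\omega(x,e,i)=\infty$, i.e.\ the one-step relation. Indeed, by the definition of the transitive closure, there exist $x=z_0,z_1,\ldots,z_k=y$ in $\Z^d$ with $z_{j+1}-z_j\in\mathcal E$ and $\sum_{i\ge 1}\omega(z_j,z_{j+1}-z_j,i)=\infty$ for each $j$. Applying the one-step statement inductively, if $z_j$ is visited infinitely often ($P_{0,\omega}$-a.s.), so is $z_{j+1}$. After $k$ steps we reach $y$.

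Now for the one-step statement, fix $x,e$ with $\sum_i\omega(x,e,i)=\infty$ and let $\tau_i:=\inf\{n\ge 0\mid \#\{k\le n: X_k=x\}=i\}$ be the time of the $i$-th visit to $x$, with $\tau_i=\infty$ if there is no such visit. Each $\tau_i$ is a stopping time with respect to the natural filtration $(\mathcal F_n)_{n\ge 0}$ of $X$. Consider the events
\[ A_i:=\{\tau_i<\infty,\ X_{\tau_i+1}=x+e\}.\]
On $\{\tau_i<\infty\}$, equation (\ref{top}) gives $P_{0,\omega}[A_i\mid \mathcal F_{\tau_i}]=\omega(x,e,i)$, while on $\{\tau_i=\infty\}$ this conditional probability is zero.

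The key step is the conditional (L\'evy) extension of the second Borel-Cantelli lemma: $P_{0,\omega}$-a.s.,
\[ \{A_i\text{ occurs i.o.}\}\ =\ \Bigl\{\sum_{i\ge 1}P_{0,\omega}[A_i\mid \mathcal F_{\tau_i\wedge n}]\to\infty\text{ as }n\to\infty\Bigr\},\]
which, on the event $\{x\text{ visited i.o.}\}=\bigcap_i\{\tau_i<\infty\}$, reduces to the condition $\sum_i\omega(x,e,i)=\infty$. Since the latter holds by assumption, infinitely many $A_i$ occur $P_{0,\omega}$-a.s.\ on $\{x\text{ visited i.o.}\}$, and each $A_i$ forces a visit to $x+e$ at time $\tau_i+1$. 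Hence $x+e$ is visited infinitely often, finishing the one-step case.

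The only slightly delicate point is justifying the use of the conditional Borel-Cantelli lemma with stopping times that may be infinite; this is handled by intersecting with $\{x\text{ visited i.o.}\}$, on which all $\tau_i$ are finite, so the standard form (applied to the filtration $(\mathcal F_{\tau_i})_{i\ge 1}$) applies directly. The rest is bookkeeping, and the induction for the transitive closure is immediate.
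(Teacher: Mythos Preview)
Your proof is correct and takes essentially the same approach as the paper, which simply states that the lemma ``follows from the Borel-Cantelli lemma'' without further detail; you have supplied those details (reduction to the one-step case, then the conditional/L\'evy form of the second Borel-Cantelli lemma along successive visits). One cosmetic point: the displayed formula with $\mathcal F_{\tau_i\wedge n}$ and $n\to\infty$ is awkwardly stated---on $\{x\text{ visited i.o.}\}$ you can and do simply use the filtration $(\mathcal F_{\tau_i})_{i\ge 1}$ and the standard conditional Borel-Cantelli lemma, so that intermediate formulation is unnecessary.
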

\begin{theorem}\label{supa}{\bf ($d=1$, range)}
Assume {\rm (SE)} and {\rm (EL)}.
\begin{enumerate}
\item[(a)] If $b_1>0$ and $b_{-1}>0$ then 
the range is $P_0$-a.s.\ 
finite.
\item[(b)] If $b_1=0$ and $b_{-1}>0$ then $P_0$-a.s.\ $X_n\to+\infty$ as $n\to\infty$.
\item[(c)] If $b_1>0$ and $b_{-1}=0$ then $P_0$-a.s.\ $X_n\to-\infty$ as $n\to\infty$.
\item[(d)] If $b_1=0$ and $b_{-1}=0$ then the range is $P_0$-a.s.\ 
infinite.
\end{enumerate}
\end{theorem}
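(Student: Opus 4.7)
My approach rests on two basic tools. First, a conditional Borel--Cantelli argument at each site: for any $z$ with $\sum_i\om(z,1,i)<\infty$, the indicators $\I\{\text{the $i$-th step from $z$ is rightward}\}$ are, conditionally on $\om$ and the walk's trajectory through the $i$-th visit to $z$, independent Bernoullis with parameters $\om(z,1,i)$; hence $P_{0,\om}$-a.s.\ only finitely many rightward steps are taken at $z$ (regardless of how often $z$ is visited), and in particular the walk is eventually on one side of the bond $\{z,z{+}1\}$. The same computation also gives, under (EL), that the quenched probability of \emph{never} taking a rightward step from $z$ is at least $\prod_i(1-\om(z,1,i))>0$ whenever $\sum_i\om(z,1,i)<\infty$. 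The leftward analogs are identical. Second, in $\Z$ the directed cluster satisfies $\mathcal C_y=[\ell(y),r(y)]$, where $r(y)$ is the nearest right barrier $\ge y$ (or $+\infty$) and $\ell(y)$ the nearest left barrier $\le y$ (or $-\infty$). By Lemma~\ref{io}, whenever $y$ is visited infinitely often, so is every element of $\mathcal C_y$. Finally, under (SE) and ergodicity, $b_1>0$ yields $\PP$-a.s.\ infinitely many right barriers on each side of $0$, and analogously for $b_{-1}$.

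Case (d) is immediate: $\mathcal C_0=\Z$ $\PP$-a.s., so either Lemma~\ref{io} yields range $=\Z$ or the walk visits every site finitely often and hence visits infinitely many distinct sites. For case (b), $\mathcal C_y\supseteq[y,\infty)$ $\PP$-a.s., and I prove $X_n\to+\infty$ in two steps. Step 1, $\limsup X_n=+\infty$: if any $y$ is visited i.o., Lemma~\ref{io} gives $[y,\infty)$ visited i.o.; otherwise the walk is transient, and nearest-neighbour connectivity forces $X_n\to+\infty$ or $X_n\to-\infty$, the latter being excluded by the argument in the next paragraph. Step 2, $\liminf X_n=+\infty$: suppose $\liminf=M\in\Z$ on a positive-probability event; then $M$ is visited i.o., so by Lemma~\ref{io}$+$$b_1=0$ every $z\ge M$ is visited i.o.; pick a left barrier $z_-^*>M$ via (SE)$+$$b_{-1}>0$; the Borel--Cantelli fact forces the walk eventually into one of $[z_-^*,\infty)$ or $(-\infty,z_-^*{-}1]$, the latter contradicting Step 1 and the former giving $\liminf\ge z_-^*>M$, a contradiction. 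Case (c) is symmetric. For case (a), if any $y$ is visited i.o., then $\mathcal C_y$ is a bounded interval (both enclosing barriers exist $\PP$-a.s.), and the Borel--Cantelli fact applied at the two boundary bonds of $\mathcal C_y$ forces the walk to visit the exterior only finitely often, so the range is finite; the only remaining task is to rule out $X_n\to+\infty$ and $X_n\to-\infty$.

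The one genuinely technical step, common to cases (a), (b) and (c), is ruling out the ``wrong-direction'' transient behaviour: $X_n\to-\infty$ in case (b), $X_n\to+\infty$ in case (c), and both in case (a). In each such scenario the walk is forced to take a step in the ``wrong'' direction at each of infinitely many barriers of the relevant type; by (EL) each such crossing fails with positive quenched probability $\prod_i(1-\om(z,\mp1,i))>0$, and by (SE) and ergodicity infinitely many of these reflection probabilities are bounded away from $0$ by a common $\eps$, so iterating a regeneration-type argument at successive barrier-crossings (cleanest under (IID), but valid under (SE)$+$(EL) by a strong-Markov-type decomposition at the first arrival at each new barrier, whose environment there is still fresh) produces a combined probability of zero. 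I expect this barrier-iteration step, together with the appropriate independence-after-regeneration justification under (SE), to be the hardest part of the proof.
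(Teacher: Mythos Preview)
Your approach is correct in outline, but you have misidentified the hard step. The ``barrier-iteration'' you flag as the most technical part is actually immediate once you work \emph{quenched}: for fixed~$\om$, using the coin-tossing construction, the events $G_z:=\{\text{every step taken from }z\text{ is to the right}\}$ are independent across~$z$ under~$P_{0,\om}$ with $P_{0,\om}[G_z]=\pi_z:=\prod_{i\ge1}\om(z,1,i)$, and $\{\inf_n X_n=-\infty\}\subseteq\bigcap_{z\le 0}G_z^c$. Hence
\[
P_{0,\om}\Bigl[\inf_{n}X_n=-\infty\Bigr]\ \le\ \prod_{z\le0}(1-\pi_z)\ \le\ \exp\Bigl(-\sum_{z\le0}\pi_z\Bigr),
\]
and (SE) together with $\EE[\pi_0]>0$ (equivalent to $b_{-1}>0$ under (EL)) forces the exponent to $-\infty$ $\PP$-a.s. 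No regeneration, no strong-Markov decomposition, and no distinction between (IID) and (SE) is needed: the independence you want is quenched, across sites, and comes for free from the coin representation. This is exactly the paper's Lemma~\ref{4}\,\eqref{pos}, and it gives case~(a) in one line (apply it and its mirror image), making your case-(a) contradiction argument superfluous.

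For case~(b), the paper then combines $\inf_n X_n>-\infty$ with Lemma~\ref{io} and a L\'evy 0--1 argument (Lemma~\ref{4}\,\eqref{sub}): each fresh maximum gives, by (SE), a uniformly positive chance of never falling below it again, so $\{\sup_n X_n=\infty\}\subseteq\{X_n\to\infty\}$ $P_0$-a.s. Your Step~2 (find a left barrier $z_-^*>M$, apply Borel--Cantelli there) is a valid and arguably more elementary alternative to~\eqref{sub}; just be sure to also cover the subcase $\liminf_n X_n=-\infty$, which you omitted (same argument: combined with $\limsup=+\infty$ every site is visited i.o., and any left barrier then yields a contradiction). Your treatment of~(d) agrees with the paper's.
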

As we shall see in the proof in case (a) 
the walker eventually gets stuck between two essentially reflecting barriers of cookies. A barrier to his left is reflecting to the right and a barrier to his right is reflecting to the left. In case (b) there are only barriers reflecting to the right. They act like valves.  Once the walker has passed any of them from left to right he has a positive probability never to penetrate it in the opposite direction.
Case (d) is the richest case. 
\begin{lemma}\label{4} {\bf ($d=1$)} Assume {\rm (SE)}.
\begin{eqnarray}
&&\label{pos} \mbox{Let $b_{-1}>0$  and assume {\rm (EL)}. Then $P_0[\forall n\ge 0: X_n\ge 0]>0$}\\
&&\mbox{and  $P_0[\inf_{n\ge 0}X_n>-\infty]=1$.}\nonumber \\
&&\label{sub} \mbox{If $P_0[\forall n\ge 0: X_n\ge 0]>0$ then $P_0$-a.s.\ $\{\sup_{n\ge 0}X_n=\infty\}\subseteq \{X_n\to\infty\}$.}
\end{eqnarray}
\end{lemma}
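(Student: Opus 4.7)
The plan is to handle both parts by combining the strong Markov property for ERW (the future after any stopping time $\tau$ is an ERW in the leftover environment $\psi(\om,(X_j)_{j<\tau})$ started at $X_\tau$) with Birkhoff's ergodic theorem applied to the stationary cookie field. The key auxiliary quantity is $g_z(\om):=\prod_{i\ge 1}(1-\om(z,-1,i))$, which by (EL) is well defined in $[0,1]$ and, by coupling the successive steps from $z$ with an independent sequence of Bernoulli trials, is a lower bound for the conditional probability, given $\om$, that the ERW never takes a $-1$-step from $z$.

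For part (1), the event $\{\sum_{i\ge 1}\om(0,-1,i)<\infty\}$ has $\PP$-probability $b_{-1}>0$ by definition of $b_{-1}$, and on this event $g_0(\om)$ is strictly positive (since $\om(0,-1,i)\in(0,1)$ a.s.\ by (EL) and $1-x\le e^{-x}$), so $\EE[g_0]>0$. Since $\{X_n\ge 0\ \forall n\}$ coincides with the event that the walker never jumps from $0$ to $-1$, I obtain $P_0[X_n\ge 0\ \forall n]\ge\EE[g_0]>0$. For the a.s.\ boundedness of the infimum I iterate this coupling via the strong Markov property at successive first visits to $0,-1,-2,\ldots$: since $T_{-(n-1)}$ is the first visit to $-(n-1)$, the cookies at that site are still fresh, and the previous argument applied at $-(n-1)$ yields the inductive bound
\[P_{0,\om}[T_{-n}<\infty]\le\prod_{j=0}^{n-1}\bigl(1-g_{-j}(\om)\bigr).\]
By (SE) the variables $g_{-j}(\om)$ are stationary with mean $\EE[g_0]>0$, so Birkhoff gives $\sum_jg_{-j}(\om)=\infty$ $\PP$-a.s., hence the right-hand side tends to $0$ a.s., proving $P_0[\inf_nX_n=-\infty]=0$.

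For part (2), set $f(\om):=P_{0,\om}[X_n\ge 0\ \forall n]$ and, more generally, $f_k(\om):=P_{k,\om}[X_n\ge k\ \forall n]$. By translation invariance, $f_k(\om)\stackrel{\rm d}{=}f(\om)$ under $\PP$, and $\EE[f]>0$ by hypothesis; pick $\delta>0$ with $\PP[f\ge\delta]>0$. By (SE) and Birkhoff the set $\{k\ge 0:f_k(\om)\ge\delta\}$ is a.s.\ infinite, and I list its elements as $k_1<k_2<\cdots\to\infty$. Let $\tau_j:=T_{k_j}$. On $\{\sup_n X_n=\infty\}$ every $\tau_j$ is finite, and since the walk is nearest-neighbor and starts at $0\le k_j$, no site $\ge k_j$ has been visited before $\tau_j$, so the cookies there are untouched. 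By the strong Markov property the conditional probability of $B_j:=\{X_{\tau_j+n}\ge k_j\ \forall n\ge 0\}$ given $\mathcal F_{\tau_j}$ equals $f_{k_j}(\om)\ge\delta$ on $\{\tau_j<\infty\}$. The plan is to iterate this and prove
\[P_{0,\om}\Big[\{\sup X_n=\infty\}\cap\bigcap_{j=1}^{J}B_j^c\Big]\le\prod_{j=1}^{J}\bigl(1-f_{k_j}(\om)\bigr),\]
which tends to $0$ $\PP$-a.s.\ since $\sum_jf_{k_j}(\om)=\infty$. Consequently infinitely many $B_j$ occur $P_0$-a.s.\ on $\{\sup X_n=\infty\}$, which forces $\liminf X_n\ge k_j\to\infty$ and thus $X_n\to\infty$.

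The main subtlety is the product bound in part (2): the event $B_j$ is not $\mathcal F_{\tau_{j+1}}$-measurable, so the standard conditional Borel--Cantelli argument does not apply directly. One has to split $B_j^c$ into its $\mathcal F_{\tau_J}$-measurable piece (the walker dips below $k_j$ already before time $\tau_J$) and its tail piece; the latter forces the walker to go below $k_j$ after $\tau_J$, which---because $k_j<k_J$---is absorbed into $B_J^c$. Careful bookkeeping of this decomposition, combined with the strong Markov property applied at each $\tau_j$, delivers the displayed product. I expect this iteration to be the main technical step, whereas the Birkhoff applications, the Bernoulli-coupling inequality for $g_z$, and the strong Markov at a single stopping time are routine.
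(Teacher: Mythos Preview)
Your argument for (\ref{pos}) is correct and essentially identical to the paper's: your $g_z$ is the paper's $\pi_z$, and both proofs bound $P_{0,\om}[\inf_n X_n=-\infty]$ by $\prod_{z\le 0}(1-\pi_z)$ and use ergodicity to make this vanish. (One small slip: the inequality $1-x\le e^{-x}$ points the wrong way for showing $g_0>0$; what you actually need is that $\prod_i(1-a_i)>0$ whenever $a_i\in(0,1)$ and $\sum_i a_i<\infty$.)

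For (\ref{sub}) there is a genuine gap. The decomposition you describe---writing $B_j^c=C_j\cup D_j$ with $C_j\in\F_{\tau_J}$ and $D_j\subseteq B_J^c$---does not deliver the product. When you intersect $\bigcap_{j<J}(C_j\cup D_j)$ with $B_J^c$, the $D_j$ pieces do not disappear; they are subsets of $B_J^c$, so each factor $(C_j\cup D_j)\cap B_J^c$ still contains $D_j$, and no product structure emerges. In fact the bound itself is suspect: the events $B_1^c,\ldots,B_J^c$ can be highly positively correlated, since a \emph{single} deep excursion after time $\tau_J$ (dipping below $k_1$) realises all of them simultaneously, so there is no reason the probability should be as small as $\prod_j(1-f_{k_j}(\om))$. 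Your idea can be rescued by redefining the trials so that they become genuinely sequential---e.g.\ set $\rho_1=T_{k_1}$, let $\sigma_1$ be the first time after $\rho_1$ the walk drops below $k_1$, let $\rho_2$ be the first time after $\sigma_1$ the walk hits a fresh good level $k_j$, and so on---and then the strong Markov property does give $P_{0,\om}[\sigma_1<\infty,\ldots,\sigma_J<\infty]\le(1-\delta)^J$; but this is a different construction and still needs a further layer to upgrade $\liminf X_n>-\infty$ to $\liminf X_n=\infty$.

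The paper bypasses all of this with L\'evy's 0--1 law. For fixed $k$ and any $m\ge k$ one has, on $\{\sup_n X_n=\infty\}$,
\[
P_{0,\om}\bigl[\liminf_n X_n\ge k\,\big|\,\F_{T_m}\bigr]\ \ge\ P_{m,\om}[\forall n:\ X_n\ge m]=f_m(\om),
\]
and by ergodicity $f_m(\om)\ge\eps:=P_0[\forall n:X_n\ge 0]/2$ for infinitely many $m$. Since the conditional probability converges a.s.\ to $\won_{\{\liminf_n X_n\ge k\}}$, that indicator must equal $1$; letting $k\to\infty$ finishes. This single martingale-convergence step replaces your entire iteration and avoids the correlation issue completely.
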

\begin{proof}[Proof of Lemma \ref{4}]
Assume $b_{-1}>0$. If all the jumps from 0 go to 1 then $X_n\ge 0$ for all $n\ge 0$. Therefore, 
\[P_{0}[\forall n\ge 0:\ X_n\ge 0]\ \ge \ \EE[\pi_0],\quad\mbox{where}\quad
\pi_z:=\prod_{i\ge 1}\om(z,1,i)\ =\ \prod_{i\ge 1}(1-\om(z,-1,i)).
\]
By $b_{-1}>0$ and {\rm (EL)} we have $\EE[\pi_0]>0$, which yields the first statement.
Similarly, if $\inf_n X_n=-\infty$ then the  walk jumps for each $z\le 0$ at least once from $z$ to $z-1$. Therefore, for all $\om\in\Om$,
\begin{equation}\label{1st}
P_{0,\om}\left[\inf_{n\ge 0}X_n=-\infty\right]\ \le\ \prod_{z\le 0}(1-\pi_z)\ \le\ \exp\bigg(-\sum_{z\le 0}\pi_z\bigg).
\end{equation}
By assumption (SE) the sequence $(\pi_z)_{z\le 0}$ is stationary and ergodic.
Since $\EE[\pi_0]>0$ the right-hand side of (\ref{1st}) vanishes $\PP$-a.s.\ due to (SE). Hence $P_0$-a.s.\ $\inf_nX_n>-\infty$.

For the proof of (\ref{sub}) let $k,m\in\N$, $k\le m$ and let $(\F_n)_{n\ge 0}$ be the filtration generated by $X$. 
Then for all $\om\in\Om$, $P_{0,\om}$-a.s.\ on the event $\{\sup_nX_n=\infty\}$,
\begin{eqnarray}\nonumber
P_{0,\om}\left[\liminf_{n\to\infty} X_n\ge k\ \big|\  \F_{T_m}\right]&\ge&
 P_{0,\om}\left[\forall n\ge T_m:  X_n\ge m\ \big|\  \F_{T_m}\right]\\
&=&P_{m,\psi(\om,(X_n)_{n< T_m})}[\forall n\ge 0: X'_n\ge m]\label{vid} 
\end{eqnarray}
by the strong Markov property for the Markov chain $((X_i)_{0\le i\le
  n})_{n\ge 0}$.  However, since $\psi(\om,(X_n)_{n< T_m})(x)$ and
$\om(x)$ differ only at sites $x<m$ the expression in (\ref{vid}) is
equal to $P_{m,\om}[\forall n\ge 0: X_n\ge m]$ which is, due to (SE),
$\PP$-a.s.\ for infinitely many $m\in \N$ larger than
$\eps:=P_0[\forall n\ge 0: X_n\ge 0]/2$.  Hence, on the event
$\{\sup_nX_n=\infty\}$, $P_0$-a.s.,
\[\eps\le \liminf_{m\to\infty}P_{0,\om}\left[\liminf_{n\to\infty} X_n\ge k\ \big|\  \F_{T_m}\right]\ =\ \won_{\{\liminf_{n\to\infty} X_n\ge k\}}
\]
by Levy's 0-1 law.  Consequently, $P_0$-a.s.\
$\{\sup_nX_n=\infty\}\subseteq \{\liminf_{n\to\infty} X_n\ge
k\}$. Letting $k\to\infty$ then yields the claim.
\end{proof}
\begin{proof}[Proof of Theorem \ref{supa}]
  Statement (a) follows from (\ref{pos}) and its corresponding
  counterpart for the case $b_1>0$.  For the proof of (b) observe that
  by (\ref{pos}) $P_0$-a.s.\ $\inf_n X_n>-\infty$. Therefore,
  $P_0$-a.s.:
  \[\{X_n\to\infty\}^c\subseteq\bigcup_{z\in\Z}\{X_n=z\
  \mbox{i.o.}\}\stackrel{\rm L.\ \ref{io}}{\subseteq}\left\{\sup_{n\ge
      0}X_n=\infty\right\}
  \stackrel{(\ref{pos}),(\ref{sub})}{\subseteq}\{X_n\to\infty\},\]
  which implies claim (b). Claim (c) follows from (b) by symmetry.  In
  case (d), $P_0$-a.s.,
\[\left\{\sup_{n\ge 0}|X_n|<\infty\right\}\subseteq\bigcup_{z\in\Z}\{X_n=z\ \mbox{i.o.}\}\stackrel{\rm L.\ \ref{io}}{\subseteq}
\bigg\{\sup_{n_\ge 0}X_n=\infty\bigg\}\subseteq\bigg\{\sup_{n_\ge 0}|X_n|=\infty\bigg\},
\]
which yields the claim.
\end{proof}
If one strengthens the assumption (SE) to (IID) then a statement similar to Theorem \ref{supa} can be made also in higher dimensions. The following result  builds upon  \cite[Lem.\  2.2, 2.3]{HS11}, which deals with RWRE with possibly forbidden directions. It puts the example given  in \cite[Rem.\  1]{Zer06} in a general framework.
\begin{theorem}\label{R2}{\bf ($d\ge 1$, range)} Assume {\rm (IID)}  and {\rm (EL)}. If there is an orthogonal set $F\subset \mathcal E$ such that $b_F=0$ then the range is $P_0$-a.s.\ infinite. If there is no such set then the range is $P_0$-a.s.\ finite. 
\end{theorem}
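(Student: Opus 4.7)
For the first half of Theorem \ref{R2}, assume $F\subseteq\mathcal{E}$ is orthogonal with $b_F=0$. By translation invariance of $\PP$ (a consequence of (IID)), $\PP[\forall e\in F:\,z\not\stackrel\omega\to z+e]=b_F=0$ for every $z\in\Z^d$; intersecting over this countable family of sites shows that $\PP$-a.s.\ for every $z\in\Z^d$ some $e\in F$ satisfies $z\stackrel\omega\to z+e$. Pick one such $e$ deterministically, say the first in a fixed ordering of $F$, and denote it $f(z)$. Orthogonality of $F$ means its elements lie along distinct coordinate axes; in particular no two of them are negatives of each other. Hence the iterates $y_{k+1}:=y_k+f(y_k)$ strictly increase the quantity $\sum_{e\in F}y_k\cdot e$ by one at each step, so they are pairwise distinct and the orbit is infinite. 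If the range of $X$ were finite with positive $P_0$-probability, pigeonhole would provide a site $y_0$ visited infinitely often; applying Lemma~\ref{io} inductively along $(y_k)_{k\ge 0}$ would then put the infinite set $\{y_k:k\ge 0\}$ into the range, contradicting finiteness.

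For the converse, assume $b_F>0$ for every orthogonal $F\subseteq\mathcal{E}$ and aim to show $P_0$-a.s.\ finite range. Inspired by \cite[Lem.~2.2, 2.3]{HS11}, I would introduce the random set $\eta(z):=\{e\in\mathcal{E}:z\stackrel\omega\to z+e\}$, which is i.i.d.\ in $z$ under (IID), and call a finite set $A\subseteq\Z^d$ a \emph{staircase} if at every boundary site $z\in A$ (i.e., $z\in A$ with some neighbor $z+e\notin A$) the outward direction set $O_A(z):=\{e\in\mathcal{E}:z+e\notin A\}$ is orthogonal; in particular axis-parallel rectangles qualify. Call $A$ \emph{blocked} by $\omega$ if $\eta(z)\cap O_A(z)=\emptyset$ for every boundary $z$. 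By (IID) and the assumption, $\PP[A\text{ is blocked}]=\prod_{z\in\partial A}b_{O_A(z)}$, a finite product of strictly positive factors. On the event $\{A\text{ blocked}\}$, every exit edge $(z,z+e)$ of $A$ has $\sum_i\omega(z,e,i)<\infty$, and a conditional Borel-Cantelli argument mirroring that for~(\ref{1st}) in Lemma~\ref{4} shows that under $P_{0,\omega}$ the walker traverses each exit edge only finitely many times, hence exits $A$ only finitely often.

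The main obstacle is combining these local trapping statements into a global ``finite range'' conclusion. Even a blocked staircase $A$ containing $X_0=0$ permits the walker to exit $A$ and wander into $\Z^d\setminus A$ before all its exit cookies are exhausted, in which case one needs a new enclosing trap for the walker's current location. I would handle this inductively: by (IID), conditional on everything the walker has already seen, the environment in unexplored regions of $\Z^d$ is distributed as a fresh copy of $\PP$, and each fresh region supplies a uniformly positive chance (at bounded spatial scale) of producing a small blocked staircase that encloses the walker. A Borel-Cantelli argument on these (roughly) independent trapping attempts, together with an ergodic zero-one argument upgrading positive probability to full probability, then yields that the walker is $P_0$-a.s.\ eventually confined to a finite set. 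The orthogonality condition is used crucially throughout: every outward direction set $O_A(z)$ arising along a staircase boundary is itself orthogonal, and only for such $F$ does the hypothesis guarantee $b_F>0$, so that blocked staircases exist with positive probability. Without orthogonality the required positive-probability blocking would not follow from the hypothesis.
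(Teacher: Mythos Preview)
Your first half is correct and coincides with the paper's argument (which routes it through Lemma~\ref{hs}: the orbit you build is exactly the self-avoiding $F$-path showing $\#\mathcal C_0=\infty$).

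The second half has the right flavor but a genuine gap. Your ``blocked staircase'' event only says that each exit edge has $\sum_i\omega(z,e,i)<\infty$; this gives no \emph{uniform} lower bound on the chance of never exiting, so you cannot feed it into a Borel--Cantelli scheme. (The observation that the walker crosses each exit edge only finitely often is true but useless: after the last exit the walker is \emph{outside} $A$, not inside.) The paper fixes this by working not with arbitrary blocked sets but with a single deterministic shape $S$ satisfying $\PP[\mathcal C_0=S]>0$, and then restricting further to an event $B_x$ on which the total exit weight from $S+x$ is at most a constant $\gamma$ and each individual exit-step probability is at most $1-\gamma^{-1}$; this yields a uniform trapping probability $\ge e^{-c\gamma}$. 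The second missing ingredient is a precise renewal scheme. Your inductive sketch (``fresh region supplies a uniformly positive chance \ldots at bounded spatial scale'') does not say when and where the fresh trap is placed, nor why the trap sits entirely in unexplored territory. The paper's device is to stop at the successive $\|\cdot\|_\infty$-radii $k\gamma$ via $\sigma_k:=\inf\{n:\|X_n\|_\infty=k\gamma\}$ and, at the hitting point $x$, position the translate $S+x-s_e$ so that it lies in the annulus $\{k\gamma\le\|z\|_\infty<(k+1)\gamma\}$; this guarantees both independence from the past (by (IID)) and that trapping prevents $\sigma_{k+1}<\infty$, yielding the recursion $P_0[\sigma_{k+1}<\infty]\le(1-e^{-c\gamma}\PP[B_0])P_0[\sigma_k<\infty]$. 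No separate zero--one upgrade is needed, and indeed the event $\{\text{finite range}\}$ is not obviously shift-invariant, so your appeal to ergodicity at the end is unjustified.
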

\begin{lemma}\label{hs}{\bf ($d\ge 1$, \cite[Lem.\  2.3]{HS11})}
$\PP[\#\mathcal C_0=\infty]=1$ if and only if there is an orthogonal set $F\subset \mathcal E$ such that $b_F=0$.
\end{lemma}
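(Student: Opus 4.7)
The plan is to establish both directions of the equivalence by direct construction, using (IID) in an essential way.

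For the ``if'' direction, suppose an orthogonal set $F \subset \mathcal E$ satisfies $b_F = 0$. By translation invariance under (IID), $\PP[\forall e \in F:\ z \not\stackrel{\om}{\to} z+e] = b_F = 0$ for every $z \in \Z^d$, so a countable union yields that $\PP$-a.s.\ every site $z$ admits some $e(z) \in F$ with $z \stackrel{\om}{\to} z + e(z)$. Starting at $x_0 := 0$ and setting $x_{n+1} := x_n + e(x_n)$, I obtain a sequence with $x_n \in \mathcal C_0$ for $n \ge 1$ and every increment lying in $F$. Since $F$ is a non-empty orthogonal subset of $\{\pm e_1,\ldots,\pm e_d\}$, the vector $v := \sum_{e \in F} e$ is non-zero and satisfies $e \cdot v = 1$ for each $e \in F$; hence $x_n \cdot v = n$, the $x_n$ are pairwise distinct, and $\#\mathcal C_0 = \infty$.

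For the ``only if'' direction I argue by contrapositive: assume $b_F > 0$ for every orthogonal $F \subseteq \mathcal E$ and exhibit $\PP[\#\mathcal C_0 < \infty] > 0$. Fix $N \in \N$, set $S := \{-N,\ldots,N\}^d$, and for each $z \in S$ let $F_z := \{e \in \mathcal E :\ z + e \notin S\}$ be the set of outward exit directions. A short case check shows that each $F_z$ is orthogonal, since $e_j \in F_z$ forces $z_j = N$ while $-e_j \in F_z$ forces $z_j = -N$, two incompatible conditions, so $F_z$ contains at most one member of each pair $\{\pm e_j\}$. On the event $\bigcap_{z \in S,\, e \in F_z} \{z \not\stackrel{\om}{\to} z+e\}$ no one-step move from any $z \in S$ can exit $S$, so by transitivity $\mathcal C_0 \subseteq S$. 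By (IID) the blocking events at distinct $z$ are independent, whence
\[
\PP[\mathcal C_0 \subseteq S] \;\ge\; \prod_{z \in S} b_{F_z} \;>\; 0,
\]
because the product is finite, $b_\emptyset = 1$ at interior points, and $b_{F_z} > 0$ at boundary points by hypothesis. This forces $\PP[\#\mathcal C_0 = \infty] < 1$, as required.

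The only mildly subtle step is the combinatorial observation that the outward-edge set $F_z$ is orthogonal at every lattice point of the cube $S$ — including corners and edges, where multiple exit directions coexist; once this is in hand, everything else is a union-bound combined with (IID)-independence. Note that (EL) is not needed for the lemma itself, which is a purely graph-theoretic statement about the random directed graph encoded by $\om$; it only enters Theorem \ref{R2} via Lemma \ref{io}, where ``reachability in $\om$'' must be upgraded to ``actually visited by the ERW''.
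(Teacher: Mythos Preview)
Your proof is correct and follows essentially the same approach as the paper's: construct an $F$-directed self-avoiding path for the ``if'' direction, and for the converse trap $\mathcal C_0$ inside a cube by blocking all outward edges using (IID) and the orthogonality of the outward-direction sets. The only cosmetic difference is that the paper uses the minimal cube $\{0,1\}^d$ (where every vertex is a corner and each $F_x$ has exactly $d$ elements), whereas you use a larger centered cube $\{-N,\ldots,N\}^d$ with interior points carrying $F_z=\emptyset$; both choices work for the same reason.
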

For completeness we include a proof of this lemma.
\begin{proof}[Proof of Lemma \ref{hs}]
If there is an orthogonal set $F\subset \mathcal E$ with $b_F=0$ then there is $\PP$-a.s.\ a nearest-neighbor path $(y_n)_{n\ge 0}$ starting at $y_0=0$ with $y_n\stackrel{\om}{\to}y_{n+1}$ and $y_{n+1}-y_n\in F$ for all $n\ge 0$. Since $F$ is orthogonal this path is self-avoiding and hence $\{y_n\mid n\ge 0\}\subseteq \mathcal C_0$ is infinite.
Conversely, assume that there is no such set $F$. Then 
\[\left\{\#\mathcal C_0<\infty\right\}\supseteq\left\{\mathcal C_0\subseteq\{0,1\}^d\right\}\supseteq\left\{\forall x\in\{0,1\}^d\ \forall e\in F_x: x\not\stackrel{\om}{\to}x+e\right\},\]
where $F_x:=\{(-1)^{x_i+1}e_i\mid i=1,\ldots,d\}$ is the (orthogonal) set of directions pointing from $x$ towards the complement of $\{0,1\}^d$. By assumption $b_{F_x}>0$ for all $x\in\{0,1\}^d$ and therefore, by independence, $P[\#\mathcal C_0=\infty]<1$.
\end{proof}
\begin{proof}[Proof of Theorem \ref{R2}]
Following the idea behind \cite[Lem.\  2.2]{HS11} we shall show that the range is $P_0$-a.s.\ infinite if $\PP$-a.s.\ $\#\mathcal C_0=\infty$ and $P_0$-a.s.\ finite otherwise. 
The claim of Theorem \ref{R2} then follows from Lemma \ref{hs}. 

To prove the first implication we assume that $\mathcal C_0$ and hence all $\mathcal C_x$, $x\in\Z^d$, are $\PP$-a.s.\ infinite. Then $P_0$-a.s.,
\begin{eqnarray*}
\left\{\#\{X_n\mid n\ge 0\}<\infty\right\}&\subseteq&\bigcup_{x\in\Z^d}\left\{X_n=x\ \mbox{i.o.}\right\}
\stackrel{\rm L.\ \ref{io}}{\subseteq}\bigcup_{x\in\Z^d}\left\{C_x\subseteq\{X_n\mid n\ge 0\}\right\}\\
&\subseteq&\left\{\#\{X_n\mid n\ge 0\}=\infty\right\}.
\end{eqnarray*}
This proves the first implication.
For the opposite implication assume $\PP[\#\mathcal C_0<\infty]>0$. Choose
 $S\subset \Z^d$ finite such that $\PP[\mathcal C_0=S]>0$. By (EL) and Definition \ref{too} there is $\gamma\in\N$, strictly larger than the $\|\cdot\|_\infty$-diameter of $S$, such that 
$\PP[B_0]>0$, where for $x\in\Z^d$,
\begin{eqnarray*}
B_x&:=&\bigg\{\mathcal C_x=S+x,\ \sum_{y\in\mathcal C_x, i\in\N, e\in\mathcal E: y+e\notin \mathcal C_x}
\om(y,y+e,i)\le \ga,\\
&&\quad \forall y\in\mathcal C_x, i\in\N: \sum_{e\in\mathcal E: y+e\notin \mathcal C_x}
\om(y,y+e,i)\le 1-\ga^{-1}\bigg\}.
\end{eqnarray*}
Choose for all $e\in\mathcal E$ some $s_e\in S$ which minimizes $S\ni x\mapsto x\cdot e$. Furthermore, define the increasing sequence of stopping times $\si_{k}:=\inf\{n\in\N\mid \|X_n\|_{\infty}=k\ga\}\le \infty$, $k\in\N$. Then for all $k\ge 0$, by partitioning,
\begin{eqnarray}\label{11}
P_0[\si_{k+1}<\infty]&=&\sum_{e\in\mathcal E, x:\|x\|_\infty=x\cdot e=k\ga}P_0\left[\si_{k+1}<\infty, X_{\si_k}=x, B_{x-s_{e}}\right]\\
&&+ \sum_{e\in\mathcal E, x:\|x\|_\infty=x\cdot e=k\ga} P_0\left[\si_{k+1}<\infty, X_{\si_k}=x, B^c_{x-s_{e}}\right]\label{22}
\end{eqnarray}
First observe that for all  $e\in\mathcal E$ and  $x\in\Z^d$ with $\|x\|_\infty=x\cdot e=k\ga$,
\begin{equation}\label{cho}
k\ga\le \|z\|_\infty< (k+1)\ga\quad \mbox{for all $z\in S+x-s_{e}$.}
\end{equation}
The summands in (\ref{22}) are easy to handle:
\[P_0\left[\si_{k+1}<\infty, X_{\si_k}=x, B^c_{x-s_{e}}\right]\le 
\EE\left[P_{0,\om}[\si_{k}<\infty, X_{\si_k}=x], B_{x-s_{e}}^c\right].
\]
Observe that $P_{0,\om}[\si_{k}<\infty, X_{\si_k}=x]$ is $\si(\om(z,\cdot,\cdot); \|z\|_\infty<k\ga)$-measurable, whereas $B_{x-s_{e}}^c\in\si(\om(z,\cdot,\cdot); \|z\|_\infty\ge k\ga)$ by the first inequality in (\ref{cho}). Therefore, by (IID), the expression in (\ref{22}) can be estimated from above by
\begin{equation}\label{piano}
 \sum_{e\in\mathcal E, x:\|x\|_\infty=x\cdot e=k\ga}\EE\left[P_{0,\om}[\si_{k}<\infty, X_{\si_k}=x]\right]\ \PP\left[B_{x-s_{e}}^c\right]=P_0[\si_k<\infty]\ \PP[B^c_0].
\end{equation}
We now turn to the right-hand side of (\ref{11}). Due to the second inequality in (\ref{cho}) its summands can be estimated from above by
\begin{eqnarray}\label{bell}
&&P_0\left[\si_k<\infty, X_{\si_k}=x,  B_{x-s_{e}}, \{X_n: n\ge \si_k\}\not\subseteq S+x-s_{e}\right]\\
&=&E_0\left[ P_{x,\psi(\om,(X_n)_{n<\si_k})}\left[\{X'_n: n\ge 0\}\not\subseteq S+x-s_{e}\right],\si_k<\infty, X_{\si_k}=x,  B_{x-s_{e}}\right],\nonumber
\end{eqnarray}
where we used  the strong Markov property. By the first inequality in (\ref{cho}) we may replace in the last expression $\psi(\om,(X_n)_{n<\si_k})$ by $\om$ and get that the quantity on the right-hand side of (\ref{bell}) is equal to
\begin{eqnarray}\nonumber
&&\EE\left[P_{0,\om}\left[\si_k<\infty, X_{\si_k}=x\right]  P_{x,\om}\left[\{X_n: n\ge 0\}\not\subseteq S+x-s_{e}\right],  B_{x-s_{e}}\right]\\
&\stackrel{\rm (IID)}{=}&
P_0\left[\si_k<\infty, X_{\si_k}=x\right]\  \EE\left[P_{0,\om}\left[\{X_n: n\ge 0\}\not\subseteq S-s_{e}\right],  B_{-s_{e}}\right].\label{ping}
\end{eqnarray}
On the event $B_{-s_{e}}$ any walker who starts inside the cluster  $S-s_e=\mathcal C_{-s_e}$ at 0 and eventually leaves this cluster has to cross a bond $(y,y+f)$ with $y\in \mathcal C_{-s_e}$ and $y+f\notin \mathcal C_{-s_e}$. Therefore, on $B_{-s_{e}}$,
\begin{eqnarray*}
\lefteqn{P_{0,\om}\left[\{X_n\mid n\ge 0\}\not\subseteq S-s_{e}\right]\le
1-\prod_{y\in \mathcal C_{-s_e}, i\in\N}\Bigg(1-\sum_{f\in\mathcal E: y+f\notin C_{-s_e}}\om(y,y+f,i)
\Bigg)}\\
&\le& 1-\exp\Bigg(-c\sum_{y\in \mathcal C_{-s_e}, i\in\N, f\in\mathcal E: y+f\notin C_{-s_e}}\om(y,y+f,i)\Bigg)\ \le\ 1-e^{-c\ga},
\end{eqnarray*}
where  $c$ is a finite constant such that $\ln(1-x)\ge -cx$ for all $0\le x\le 1-\ga^{-1}$.
Summarizing, we get from (\ref{bell}) and (\ref{ping}) that the sum in (\ref{11}) is at most
$P_0\left[\si_k<\infty\right]\left(
1-e^{-c\ga}\right)\PP[B_0]$.
Together with (\ref{piano}) this implies by induction
\[P_0[\si_{k+1}<\infty]\le (1-e^{-c\ga}\PP[B_0])P_0[\si_k<\infty]\le (1-e^{-c\ga}\PP[B_0])^k.\]
Hence there is $P_0$-a.s.\ some $k$ with $\si_k=\infty$, which implies the finiteness of the range.
\end{proof}

\section{Recurrence and  transience}\label{srt}
Since ERW is not a Markov chain the standard definitions of recurrence and transience do not apply right away  in this setting.
\begin{defi}\label{d1}
{\rm
We call an ERW $X$ satisfying (SE) \textit{recurrent} if 
it visits $P_0$-a.s.\ every site $z\in\Z^d$ infinitely often. 
It is called \textit{transient} if $P_0$-a.s.\ no site is visited infinitely often, i.e.\ if
$P_0$-a.s.\ $|X_n|\to\infty$ as $n\to\infty$. For any direction $\ell\in\R^d\backslash\{0\}$ we say that the ERW is \textit{transient in direction $\ell$} if $P_0$-a.s.\ $X_n\cdot\ell\to\infty$ as $n\to\infty$. 
We set 
\[A_\ell:=\left\{\lim_{n\to\infty} X_n\cdot\ell=\infty\right\}.\]
 In $d=1$ transience in direction 1 (resp.\ -1) is also called \textit{transience to the right (resp.\ left)}.
}
\end{defi}
\subsection{Results for $d=1$}
\begin{theorem}\label{01a} {\bf ($d=1$)}
Assume {\rm (SE)} and {\rm (EL)}. \\
Then 
$ P_0[|X_n|\to\infty]=P_0[A_1\cup A_{-1}]\in\{0,1\}.$
\end{theorem}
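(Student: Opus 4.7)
The plan splits into two parts: proving the identity $\{|X_n|\to\infty\}=A_1\cup A_{-1}$ $P_0$-a.s., and then proving that this common probability lies in $\{0,1\}$.

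For the identity, the inclusion $A_1\cup A_{-1}\subseteq\{|X_n|\to\infty\}$ is immediate. Conversely, if $|X_n|\to\infty$ then $X_n\neq 0$ for all sufficiently large $n$; since $X$ is nearest-neighbor with increments $\pm 1$, its sign cannot change without passing through $0$, so $X_n$ has eventually constant sign, which combined with $|X_n|\to\infty$ gives $X_n\to+\infty$ or $X_n\to-\infty$.

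For the zero-one property I invoke Theorem \ref{supa}. Cases (a), (b), (c) directly give the probability $0$, $1$, $1$ respectively, matching the claim, so the only nontrivial case is (d), where $b_1=b_{-1}=0$. Under (EL) and (SE) this forces $\PP$-a.s.\ $z\stackrel{\om}{\to}z\pm 1$ for every $z\in\Z$, and hence $\mathcal{C}_x=\Z$ $\PP$-a.s.\ for every $x$. Combined with Lemma \ref{io}, visiting any single site infinitely often forces every site to be visited infinitely often $P_0$-a.s.; thus $P_0$-a.s.
\[\{|X_n|\to\infty\}^c \ =\ \{X_n=0 \text{ infinitely often}\} \ =:\ R.\]
It remains to show $P_0[R]\in\{0,1\}$, which I would obtain via a L\'evy-type $0$-$1$ argument in the spirit of the proof of Lemma \ref{4}: the strong Markov property at $T_m$ rewrites $P_0[R\mid\F_{T_m}]$ on $\{T_m<\infty\}$ as a quenched probability starting from $m$ in the partially-eaten environment $\psi(\om,(X_i)_{i<T_m})$. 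An appeal to (SE) and to ergodicity of $\PP$ along the shifts $\theta_m$ should then force the limit, which by L\'evy's $0$-$1$ law equals $\I_R$, to be $\PP$-essentially deterministic; taking expectations yields $P_0[R]\in\{0,1\}$.

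The main obstacle is exactly this last step. The environment $\psi(\om,(X_i)_{i<T_m})$ agrees with $\om$ only at sites not yet visited by the walk, so it is not a clean shift of $\om$, and a naive stationarity appeal fails. To overcome this I would use that in case (d) the cookies at each site are abundant enough ($\sum_i\om(z,\pm1,i)=\infty$ $\PP$-a.s.) that eating finitely many of them does not change the tail behavior; together with the fact that $\theta_m\om$ has the same law as $\om$ under (SE), this should produce a stationary ergodic sequence to which Birkhoff's theorem applies, yielding a deterministic limit and completing the $0$-$1$ law.
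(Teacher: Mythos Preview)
Your reduction to case (d) of Theorem~\ref{supa} and your identification $\{|X_n|\to\infty\}^c=R:=\{X_n=0\ \text{i.o.}\}$ $P_0$-a.s.\ (via $b_1=b_{-1}=0$ and Lemma~\ref{io}) are correct and match the paper. The gap is exactly where you locate it: the direct L\'evy/ergodicity argument for $P_0[R]\in\{0,1\}$ does not go through. The conditional probability $P_{0,\om}[R\mid\F_{T_m}]=P_{m,\psi(\om,(X_i)_{i<T_m})}[X'_n=0\ \text{i.o.}]$ depends on the whole random path up to $T_m$, not just on $\theta_m\om$, so there is no stationary ergodic sequence in sight to which Birkhoff applies. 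The appeal to ``abundance of cookies'' does not help: even with $\sum_i\om(z,\pm1,i)=\infty$, removing finitely many cookies at a site can certainly change the quenched probability of any event, and you would need a quantitative statement (uniform over all histories) to push the argument through.

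The paper avoids this difficulty entirely by a different route. Assuming $P_0[|X_n|\to\infty]>0$, say $P_0[A_1]>0$, it invokes Lemma~\ref{D} to obtain $P_0[\forall n\ge 0:\ X_n\ge 0]>0$, and then applies (\ref{sub}) of Lemma~\ref{4} to conclude that $P_0$-a.s.\ $\{\sup_n X_n=\infty\}\subseteq A_1$. Combining this with Lemma~\ref{io} (since $b_1=0$) gives the chain
\[
\{|X_n|\to\infty\}^c\subseteq\bigcup_{z}\{X_n=z\ \text{i.o.}\}\subseteq\Big\{\sup_n X_n=\infty\Big\}\subseteq A_1\subseteq\{|X_n|\to\infty\},
\]
forcing $P_0[|X_n|\to\infty]=1$. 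The key ingredient you are missing is Lemma~\ref{D}: from $P_0[A_1]>0$ one first manufactures a positive probability of never going negative (by erasing the finitely many left-excursions from $0$), and it is \emph{this} quantity, not $P_0[R]$, to which the ergodic L\'evy-type argument of (\ref{sub}) applies cleanly, because the relevant quenched probability $P_{m,\om}[\forall n:\ X_n\ge m]$ depends only on $\om$ at sites $\ge m$ and is therefore genuinely stationary in $m$.
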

The proof uses the following lemma. This type of lemma is standard, see e.g.\ \cite[Lem.\  9]{Zer06} for references.
\begin{lemma}\label{D}{\rm ($d=1$)}
Assume {\rm (SE), (WEL),} and $P_0[A_1]>0$. Then also 
\begin{equation}\label{lamp}
P_0[\{\forall n\ge 0: X_n\ge 0\}\cap A_1]>0.
\end{equation}
\end{lemma}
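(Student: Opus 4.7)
The plan is to argue by contradiction, combining a decomposition of $A_1$ by the running minimum with a ``last-visit'' trick and the shift-invariance coming from (SE). Suppose for contradiction that $g(\omega):=P_{0,\omega}[\{X_n\ge 0\ \forall n\}\cap A_1]$ vanishes $\PP$-almost surely. Since $M_*:=\inf_n X_n$ is $P_0$-a.s.\ finite on $A_1$ and $\{M_*=0\}=\{X_n\ge 0\ \forall n\}$ (because $X_0=0$), this forces $P_0[A_1,M_*=-K]>0$ for some smallest $K\ge 1$. The goal is then to transfer positivity ``down'' from level $-K$ to level $0$, using only shift-invariance and the local structure of the walk.

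For this I would use a last-visit decomposition: on $\{A_1,M_*=-K\}$ the time $\sigma:=\sup\{n:X_n=-K\}$ is finite, $X_\sigma=-K$, and the nearest-neighbor constraint in $d=1$ together with $M_*=-K$ forces $X_{\sigma+1}=-K+1$ and $X_n\ge -K+1$ for all $n>\sigma$. Decomposing over $\sigma=m$ and applying the Markov property at the fixed time $m$, one obtains
\[
P_0[A_1,M_*=-K,\,\sigma=m]\ \le\ E_0\Big[\won_{\{X_m=-K,\ X_k\ge -K\ \forall k\le m\}}\ g\big(\theta_{-K}\psi(\omega,(X_0,\dots,X_m))\big)\Big],
\]
where $\theta_a\omega(y):=\omega(y+a)$ denotes the spatial shift. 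On the indicated event the walk stays $\ge -K$ up to time $m$, so the shifted residual environment $\theta_{-K}\psi(\omega,\dots)$ agrees with $\theta_{-K}\omega$ at all sites $<0$, differing from it only at finitely many sites $\ge 0$ where a bounded number of initial cookies have been eaten. By (SE), $\theta_{-K}\omega$ has the distribution of $\omega$, and if $g$ vanishes $\PP$-a.s.\ also on such ``finitely modified'' environments, summing over $m$ and $K$ yields $P_0[A_1]=0$, contradicting the hypothesis.

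The main obstacle is exactly this last invariance: the modified environment is not distributed as $\PP$, so a direct appeal to ``$g=0$ $\PP$-a.s.'' fails. This is where (WEL) enters crucially. Under (WEL), each single site has positive $\PP$-probability that \emph{all} cookies (at every depth and both directions) are strictly positive; consuming a bounded number of initial cookies from such a stack leaves it qualitatively unchanged. Combined with the stationarity and ergodicity provided by (SE), this should allow one to argue that the $\PP$-null event $\{g>0\}$ remains $\PP$-null after any modification affecting only finitely many sites by a finite amount of cookie-consumption. Establishing this invariance carefully---bootstrapping from single-site positivity to a finite-site statement that accommodates the path-dependent modifications---is the technical heart of the proof.
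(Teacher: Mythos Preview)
Your approach has a genuine gap at exactly the point you flag as the ``technical heart.'' You need that if $g(\omega):=P_{0,\omega}[\{X_n\ge 0\ \forall n\}\cap A_1]$ vanishes $\PP$-a.s., then $g\big(\theta_{-K}\psi(\omega,(X_0,\dots,X_m))\big)$ also vanishes a.s.\ on the relevant event. Stationarity only tells you that $\theta_{-K}\omega$ has law $\PP$; under (SE) there is no absolute continuity between the law of the cookie-modified environment and $\PP$, and (WEL) does not supply one. Nothing you have written rules out that removing a few initial cookies at finitely many sites (cookies which might, say, have forced the first step to the left) turns $g$ from $0$ into a positive number. The phrase ``qualitatively unchanged'' does not close this: you would need to prove that the event $\{g>0\}$ is invariant under the specific random cookie-consumption produced by the walk, and there is no obvious mechanism for that under (SE) and (WEL) alone.

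The paper sidesteps this entirely by working \emph{quenched}. It first proves, for any fixed environment $\omega$ and any site $z$ with $\omega(z,1,i)>0$ for all $i$, the implication
\[
P_{z,\omega}[A_1]>0\ \Longrightarrow\ P_{z,\omega}[\{X_n\ge z\ \forall n\}\cap A_1]>0,
\]
by a path-surgery argument: take a path in $A_1$, delete its finitely many excursions from $z$ to the left, and observe that the surviving path still has positive $P_{z,\omega}$-probability precisely because every cookie at $z$ allows a step to the right. This is a pointwise statement with no distributional hypothesis on $\omega$ whatsoever. Then (SE) and (WEL) are used only to locate some $z\ge 0$ with $\omega(z,1,\cdot)>0$ everywhere and $P_{0,\omega}[A_1,\,T_z<\infty]>0$; one applies the strong Markov property at $T_z$ (where the environment at $z$ is still untouched) and invokes the quenched implication in the residual environment. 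Because the hard step is proved for each fixed $\omega$, the question of whether modified environments fall in a $\PP$-null set never arises. Your last-visit decomposition is correct as far as it goes, but the missing ingredient is precisely this quenched path-surgery lemma; without it the argument does not close.
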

\begin{proof}[Sketch of the proof of Lemma \ref{D}] 
\begin{figure}[t]   
    \psfrag{n}{$n$}
    \psfrag{x}{$X_n$}
    \psfrag{z}{$z$}
    \epsfig{figure=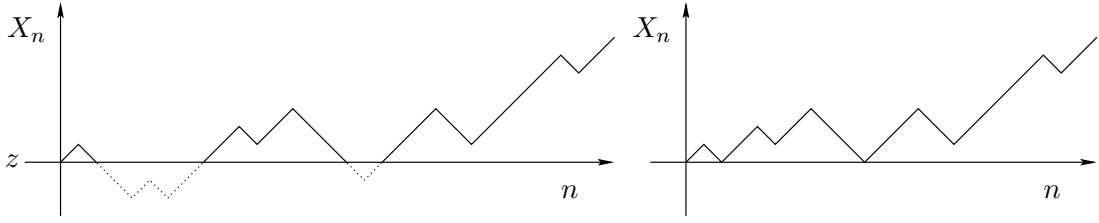,height=29mm}
    \caption{For the proof of (\ref{hm})}
    \label{D0}
  \end{figure}
  First, one shows that for all $z\in\Z$ and $\om\in\Om$ such that
  $\om(z,1,i)>0$ for all $i\ge 1$ and $P_{z,\om}[A_1]>0$ one also has
\begin{equation}\label{hm}
P_{z,\om}[\{\forall n\ge 0: X_n\ge z\}\cap A_1]>0.
\end{equation}
The proof of this can be done along the lines of the proof of
\cite[Lem.\ 8]{Zer05}.  One simply erases the (finitely many)
excursions from $z$ to the left of $z$ and visits the sites to the
right of $z$ in the same order as before, see Figure \ref{D0}.
For the proof of (\ref{lamp}) note that by assumptions (SE) and (WEL)
there is $\PP$-a.s.\ some $z\ge 0$ such that
\begin{eqnarray*}
  0&<&P_0[A_1,\ \forall i\in\N: \om(z,1,i)>0]\\
  &=&E_0\left[P_{z,\psi(\om,(X_n)_{n<T_z})}[A_1],\ T_z<\infty, \ \forall i\in\N: \om(z,1,i)>0\right].
\end{eqnarray*}
 Therefore, by (\ref{hm}),
\begin{eqnarray}\nonumber
0&<& E_0\left[P_{z,\psi(\om,(X_n)_{n<T_z})}[\{\forall n\ge 0: X'_n\ge z\}\cap A_1],\ T_z<\infty\right]
\\
&\le& P_z[\{\forall n\ge 0: X_n\ge z\}\cap A_1]\label{xx}
\end{eqnarray}
since $\psi(\om,(X_n)_{n<T_z})(x,\cdot,\cdot)=\om(x,\cdot,\cdot)$ for $x\ge z$. Due to (SE) the right-hand side of (\ref{xx}) is equal to $P_0[\{\forall n\ge 0: X_n\ge 0\}\cap A_1]$.
\end{proof}
\begin{proof}[Proof of Theorem \ref{01a}] Since the ERW has bounded jumps we have $P_0$-a.s.\ $\{|X_n|\to\infty\}=A_1\cup A_{-1}$. Now  we consider the four cases of Theorem \ref{supa}. In cases (a), (b) and (c) the 0-1 statement  is obvious due to Theorem \ref{supa}.  For case (d) we assume $b_1=b_{-1}=0$ and $P_0[|X_n|\to\infty]>0$. Without loss of generality we assume $P_0[A_1]>0$.
 By Lemma \ref{D} and (EL), $P_0[\forall n\ge 0: X_n\ge 0]>0$.
 Therefore, $P_0$-a.s.,
\[
\{|X_n|\to\infty\}^c\subseteq \bigcup_{z\in\Z}\{X_n=z\ \mbox{i.o.}\}\stackrel{L.\ \ref{io}}{\subseteq}
\left\{\sup_{n\ge 0}X_n=\infty\right\}\stackrel{(\ref{sub})}{\subseteq}
A_1\subseteq\{|X_n|\to\infty\}.
\]
This implies $P_0[|X_n|\to\infty]=1$.
\end{proof}
\begin{cor} {\rm ($d=1$)}
Assume {\rm (SE)} and {\rm (EL)}. Then the ERW is either recurrent or transient or has $P_0$-a.s.\ finite range.
\end{cor}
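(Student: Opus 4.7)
The plan is to reduce the corollary to the four-case dichotomy of Theorem \ref{supa} and then handle the only nontrivial case, case (d), by combining the 0-1 law of Theorem \ref{01a} with Lemma \ref{io}. Three of the four cases are immediate: if $b_1>0$ and $b_{-1}>0$, Theorem \ref{supa}(a) gives finite range $P_0$-a.s.; if exactly one of $b_1,b_{-1}$ vanishes, Theorem \ref{supa}(b) or (c) yields $X_n\to\pm\infty$ $P_0$-a.s., and hence $|X_n|\to\infty$, which is transience by Definition \ref{d1}.

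The substantive case is (d), where $b_1=b_{-1}=0$. Here I would first use stationarity to spread $b_1=b_{-1}=0$ from the origin to all of $\Z$: for each fixed $z\in\Z$, (SE) gives $\PP[z\not\stackrel{\om}{\to}z+1]=b_1=0$ and $\PP[z\not\stackrel{\om}{\to}z-1]=b_{-1}=0$, and a countable union of $\PP$-null sets is still null. Hence $\PP$-a.s.\ $z\stackrel{\om}{\to}z\pm 1$ for every $z\in\Z$ simultaneously, and taking transitive closures yields $\mathcal{C}_x=\Z$ for every $x\in\Z$, $\PP$-a.s.

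Applying Lemma \ref{io} with $\mathcal{C}_x=\Z$ gives the pathwise dichotomy: $P_0$-a.s., either no site is visited infinitely often, or every site is. In the first alternative, for each $M\in\N$ the finite window $\{-M,\dots,M\}$ contributes only finitely many visits (as a finite union of finitely-visited sites), which forces $|X_n|\to\infty$; in the second alternative the walk is recurrent in the sense of Definition \ref{d1}. Theorem \ref{01a} then promotes this pathwise dichotomy into an almost sure statement, since $P_0[|X_n|\to\infty]\in\{0,1\}$ makes exactly one of the two alternatives occur with probability one.

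The only mildly delicate step is the passage from $b_{\pm 1}=0$ (a single-site statement) to $\mathcal{C}_x=\Z$ for all $x$ simultaneously (a global statement), which rests on the countable-union argument enabled by (SE). Everything else is a direct invocation of Theorem \ref{supa}, Theorem \ref{01a}, Lemma \ref{io}, and the definitions.
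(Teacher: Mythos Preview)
Your proof is correct and follows essentially the same approach as the paper's: both split into the four cases of Theorem \ref{supa}, dispose of (a)--(c) immediately, and in case (d) combine Theorem \ref{01a} with Lemma \ref{io} to conclude. You spell out more explicitly the passage from $b_{\pm 1}=0$ to $\mathcal C_x=\Z$ for all $x$ (which the paper uses implicitly when invoking Lemma \ref{io}), and you present the dichotomy-then-0-1-law in the reverse order from the paper, but these are purely cosmetic differences.
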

\begin{proof}
We consider again the four cases of Theorem \ref{supa}. In  cases (a)-(c) the claim of the corollary is obviously true. For case (d) assume $b_1=0=b_{-1}$.  According to Theorem \ref{01a}, $P_0[|X_n|\to\infty]$ is either 0 or 1. In the latter case the walk is transient. In the former case there is $P_0$-a.s.\ some $z\in\Z$ which is visited infinitely often. By Lemma \ref{io} all other sites are $P_0$-a.s.\ visited infinitely often as well, i.e.\ the walk is recurrent. 
\end{proof}
\begin{problem}\label{op1}{\bf ($d=1$)} {\rm 
The event in Theorem \ref{supa}  that the range is infinite can be rephrased as $\{\sup_n|X_n|=\infty\}$. Dropping there and in Theorem \ref{01a} the absolute values raises the following problem: find conditions which imply
\begin{eqnarray}\label{si}
 P_0\left[\sup_{n\ge 0}X_n=\infty\right],\ P_0\left[\inf_{n\ge 0}X_n=-\infty\right]&\in&\{0,1\}\qquad\mbox{and/or}\\
\label{01e}
P_0[X_n\to\infty],\ P_0[X_n\to-\infty]&\in&\{0,1\}.
\end{eqnarray}
Two such sets of conditions are given in Theorem \ref{rt} below.  It
is not difficult to see, using Theorems \ref{01a} and \ref{supa} and
Lemma \ref{io}, that under the assumptions {\rm (SE)} and {\rm (EL)}
the two 0-1 statements (\ref{si}) and (\ref{01e}) are equivalent. See
also Problem \ref{arx}.  }
\end{problem}
Next we present some criteria for recurrence or transience.  
First we consider the case $M(z)\le 1$, $z\in\Z,$ where the walk is not getting excited about the sites which it has visited before.
\begin{prop}\label{m1} {\bf $(d=1)$}
Assume {\rm (SE)}, {\rm (WEL)} and $\PP$-a.s.\  $M(0)\le 1$. Then
the ERW is recurrent.
\end{prop}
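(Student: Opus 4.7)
The plan is to show that the ERW visits every site infinitely often $P_0$-a.s. As a reduction step, since $M(z)\le 1$ forces $\om(z,e,i)=1/2$ for all $i\ge 2$, we have $\sum_{i\ge 1}\om(z,e,i)=\infty$, so $z\stackrel{\om}{\to}z\pm 1$ $\PP$-a.s.\ and by transitivity $\mathcal{C}_0=\Z$ $\PP$-a.s. Lemma~\ref{io} then forces the dichotomy that either every site is visited infinitely often (recurrence), or no site is, in which case $|X_n|\to\infty$ and the $\pm 1$-step structure of $X$ forces $X_n\to+\infty$ or $X_n\to-\infty$. Since (SE), (WEL) and $M\le 1$ are invariant under the reflection $z\mapsto -z$ (applied simultaneously to positions and directions), it suffices to show $P_0[X_n\to+\infty]=0$.

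Suppose for contradiction that $P_0[X_n\to+\infty]>0$, and for $k\in\Z$ let $D_k$ denote the number of down-crossings of the edge $(k,k+1)$. On $\{X_n\to+\infty\}$ one has $D_k<\infty$ for every $k$, and for $k\ge 0$ the number of visits to site $k+1$ equals $N_{k+1}=D_k+D_{k+1}+1$ (since the number of up-crossings of $(k,k+1)$ equals $D_k+1$). Because $M(z)\le 1$, the first visit to $k+1$ uses the cookie there (going down with probability $q_{k+1}:=\om(k+1,-1,1)$) and the subsequent $D_k+D_{k+1}$ visits are simple random walk steps, so one obtains the quenched almost-sure identity $D_k = B_{k+1}+\sum_{j=1}^{D_k+D_{k+1}} C^{(k+1)}_j$, where $B_{k+1}\sim\mathrm{Bern}(q_{k+1})$ and the $C^{(k+1)}_j$ are iid $\mathrm{Bern}(1/2)$, all independent given $\om$. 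Splitting the fair-coin sum into an initial block of size $D_k$ and a terminal block of size $D_{k+1}$ and using the complement symmetry $D_k-\mathrm{Bin}(D_k,1/2)\stackrel{d}{=}\mathrm{Bin}(D_k,1/2)$, one extracts the quenched distributional identity
\[
W_k\;\stackrel{d}{=}\;B_{k+1}+W_{k+1},\qquad W_k:=\mathrm{Bin}(D_k,1/2),
\]
with $B_{k+1}$ and $W_{k+1}$ independent given $\om$.

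Iterating gives $W_k\stackrel{d}{=}\sum_{j=k+1}^{m} B_j + W_m$ quenched for every $m>k$, with all summands independent given $\om$; since $W_m\ge 0$, stochastic domination yields $P_{0,\om}[W_k\ge N]\ge P_{0,\om}[\sum_{j=k+1}^{m} B_j\ge N]$ for every $m$ and $N$. Now (WEL) together with $M(0)\le 1$ reads exactly $\PP[q_0>0]>0$, hence $\EE[q_0]>0$; by Birkhoff's ergodic theorem under (SE), $\sum_{j>k}q_j=\infty$ $\PP$-a.s., and the second Borel-Cantelli lemma applied to the (conditionally independent given $\om$) Bernoullis $B_j$ yields $\sum_{j>k}B_j=\infty$ $P_{0,\om}$-a.s. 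Letting $m\to\infty$ we conclude $P_{0,\om}[W_k=\infty]=1$, whence $P_{0,\om}[D_k=\infty]=1$ for $\PP$-a.s.\ $\om$. This contradicts the hypothesis $P_0[X_n\to+\infty]>0$, because on $\{X_n\to+\infty\}$ one has $D_k<\infty$. The main technical delicacy I anticipate is the passage from the self-referential a.s.\ identity, in which $D_k$ appears on both sides as the upper index of a random sum, to the clean quenched distributional identity for $W_k$: one must verify that the partition of the SRW Bernoullis into random sub-blocks of sizes $D_k$ and $D_{k+1}$ combined with the complement symmetry genuinely yields the asserted independence of $B_{k+1}$ and $W_{k+1}$, and that the iteration preserves joint conditional independence across the sites $k+1,\dots,m$.
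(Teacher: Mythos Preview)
Your reduction at the start is correct: under $M\le 1$ we have $\mathcal C_0=\Z$ a.s., so Lemma~\ref{io} gives the recurrence/transience dichotomy, and it remains to rule out $P_0[X_n\to+\infty]>0$.

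The error is in the passage from the pathwise identity $D_k=B_{k+1}+\sum_{j=1}^{D_k+D_{k+1}}C_j^{(k+1)}$ to the distributional identity $W_k\stackrel{d}{=}B_{k+1}+W_{k+1}$. You split the fair-coin sum into blocks of sizes $D_k$ and $D_{k+1}$ and treat each block as an honest binomial, but $D_k$ is \emph{determined by} these very coins (via $D_k=F^{(k+1)}_{D_{k+1}+1}$, the number of failures before the $(D_{k+1}+1)$-th success at site $k+1$), so the first block $\sum_{j=1}^{D_k}C_j^{(k+1)}$ is not conditionally $\mathrm{Bin}(D_k,1/2)$ given $D_k$, and the complement symmetry does not apply. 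The identity is in fact false: take $q_{k+1}=0$ (so $B_{k+1}\equiv 0$) and condition on $D_{k+1}=1$. Then $D_k\sim\mathrm{Geom}(1/2)$ on $\{0,1,2,\dots\}$, and an auxiliary $W_k=\mathrm{Bin}(D_k,1/2)$ satisfies $P[W_k=0]=\sum_{j\ge 0}2^{-(j+1)}2^{-j}=2/3$, whereas $W_{k+1}=\mathrm{Bin}(1,1/2)$ has $P[W_{k+1}=0]=1/2$. So $W_k\not\stackrel{d}{=}W_{k+1}$, and the iteration breaks down at step one. The correct conditional law is $D_k\mid D_{k+1},B_{k+1}\sim B_{k+1}+\mathrm{NegBin}(D_{k+1}+B_{k+1},1/2)$, which does give $E[D_k\mid D_{k+1}]=D_{k+1}+2q_{k+1}$, but this mean relation alone, iterated, only shows $E_{0,\om}[J^\downarrow_{n,0}]\to\infty$, which does not by itself contradict $P_0[X_n\to+\infty]>0$.

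The paper's argument avoids the branching-process route entirely. It writes the escape probability $P_{k,\om'_k}[T_{k+1}<T_0]$ explicitly via the gambler's ruin problem for simple random walk (legitimate because after the single cookie at $k$ is eaten the walk on $\{0,\dots,k\}$ is SRW), obtaining $1-2\om(k,-1,1)/(k+1)$. The product over $k\ge L$ then vanishes because $\sum_k \om(k,-1,1)/k=\infty$ $\PP$-a.s., which is precisely Lemma~\ref{cos} applied to the stationary ergodic sequence $Z_k=2\om(k-1,-1,1)$.
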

One could prove this statement by combining a monotonicity argument as in Proposition \ref{mono} below (see e.g.\ \cite[Th.\  16]{Zer05})  with Theorem \ref{rt} (\ref{r1}) below.
However, we present here a more direct proof in the spirit of  \cite[Section 2]{BW03}. 
\begin{lemma}\label{cos}
Let $(Z_n)_{n\in\N}$ be a stationary and ergodic sequence of non-negative random variables with $E[Z_1]>0$. Then $\sum_nZ_n/n=\infty$ almost surely.
\end{lemma}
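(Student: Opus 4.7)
The plan is to reduce to a binary (indicator) sequence and then extract the divergence by a soft partial-summation argument after invoking Birkhoff's ergodic theorem.

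First, I would use $E[Z_1]>0$ to choose $c>0$ with $p:=P[Z_1\ge c]>0$; such a $c$ exists because otherwise $Z_1=0$ almost surely, contradicting the positive expectation. Setting $A_n:=\mathbf{1}_{\{Z_n\ge c\}}$, the sequence $(A_n)_{n\ge 1}$ inherits stationarity and ergodicity from $(Z_n)_{n\ge 1}$ (it is a measurable factor), and $E[A_1]=p>0$. Since $Z_n\ge c\,A_n$ pointwise, it suffices to show $\sum_{n\ge 1}A_n/n=\infty$ almost surely.

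Next, I would apply Birkhoff's pointwise ergodic theorem to $(A_n)_{n\ge 1}$: setting $S_n:=\sum_{k=1}^n A_k$, we obtain $S_n/n\to p$ almost surely, and in particular $S_n\ge (p/2)n$ for all $n$ beyond a random but almost surely finite threshold $N$. It is essential here that the sequence is ergodic and not merely stationary, as stationarity alone would give convergence to a random limit $Y$ with $E[Y]=p$, which could vanish on an event of positive measure.

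Finally, I would use Abel summation to conclude: writing
\[
\sum_{k=1}^n \frac{A_k}{k}\;=\;\frac{S_n}{n}+\sum_{k=1}^{n-1}\frac{S_k}{k(k+1)},
\]
the tail of the right-hand side is bounded below by $\sum_{k>N}\frac{p/2}{k+1}$, which diverges. Combining the three steps yields $\sum_{n\ge 1}Z_n/n\ge c\sum_{n\ge 1}A_n/n=\infty$ almost surely. There is no genuine obstacle here; the only point requiring care is the invocation of ergodicity (rather than mere stationarity) to upgrade a positive-measure conclusion to an almost sure one.
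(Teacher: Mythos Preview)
Your proof is correct and takes a genuinely different route from the paper's. The paper also invokes the ergodic theorem, but instead of Abel summation it builds geometrically growing blocks: it chooses integers $n_k\ge s_{k-1}:=n_1+\cdots+n_{k-1}$ so that, by stationarity and convergence in probability, $P\big[\tfrac{1}{n_k}\sum_{n=s_{k-1}+1}^{s_k}Z_n<E[Z_1]/2\big]\le k^{-2}$, and then uses Borel--Cantelli to conclude that eventually every block contributes at least $E[Z_1]/4$ to $\sum_n Z_n/n$. Your argument is more streamlined: the reduction to indicators $A_n=\mathbf{1}_{\{Z_n\ge c\}}$ followed by Birkhoff and the identity $\sum_{k\le n}A_k/k=S_n/n+\sum_{k<n}S_k/(k(k+1))$ avoids both the block construction and Borel--Cantelli. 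One incidental advantage of your truncation step is that it handles the case $E[Z_1]=\infty$ without modification, whereas the paper's proof is phrased using $E[Z_1]/2$ and $E[Z_1]/4$ and would need a cosmetic adjustment in that case (though in the paper's application the $Z_n$ are bounded).
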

\begin{proof}[Proof of Lemma \ref{cos}]
By the ergodic theorem one can recursively choose positive integers $(n_k)_{k\in\N}$ such that for all $k\ge 1$, $n_{k}\ge s_{k-1}:=n_1+\ldots+n_{k-1}$ and 
\[P[B_k^c]\le k^{-2},\quad\mbox{where}\quad 
B_k:=\bigg\{\frac{1}{n_k}\sum_{n=s_{k-1}+1}^{s_k} Z_n\ge E[Z_1]/2\bigg\}.\]
By the Borel-Cantelli lemma there is a random $K\in\N$ such that almost surely all events $B_k,\ k\ge K,$ 
occur. Hence, since $Z_n\ge 0$ almost surely,
\[
\sum_{n\ge 1}\frac{Z_n}{n}\ \ge\ \sum_{k\ge K}\sum_{n=s_{k-1}+1}^{s_{k}}\frac{Z_n}{n}\ \ge\ 
\sum_{k\ge K}\frac{n_{k}}{s_{k}}\ \frac{1}{n_{k}}\sum_{n=s_{k-1}+1}^{s_{k}}Z_n
\ \ge\ \sum_{k\ge K}\frac{E[Z_1]}{4}\ =\ \infty.
\]
\end{proof}
\begin{proof}[Proof of Proposition \ref{m1}]
By Theorem \ref{supa} (d), $P_0$-a.s.\ $\sup_{n\ge 0}|X_n|=\infty$. Therefore,
\begin{eqnarray}\label{ben}
P_0[\{X_n=0\ \mbox{i.o.}\}^c]&\le& P_0\left[\left\{X_n=0\ \mbox{i.o.}\right\}^c\cap\left\{\sup_{n\ge 0}X_n=\infty\right\}\right]\\
&&+\ P_0\left[\left\{X_n=0\ \mbox{i.o.}\right\}^c\cap\left\{\inf_{n\ge 0}X_n=-\infty\right\}\right].\label{ben2}
\end{eqnarray}
The term on the right-hand side of (\ref{ben})  is equal to  
\begin{eqnarray}\nonumber
&&P_0\left[\bigcup_{L\in\N}\{\forall k\ge L: T_k<\infty\}\cap\{\forall n\ge T_L: X_n>0\}\right]\\
&=&\lim_{L\to\infty}\EE\Bigg[P_{0,\om}[T_L<\infty]\prod_{k\ge L}P_{k,\om'_k}[T_{k+1}<T_0]\Bigg],
\label{we}
\end{eqnarray}
where $\om'_k:=\psi(\om,(0,1,\ldots,k-1))$.
Here we used that $P_{0,\om}$-a.s.\ $P_{k,\psi(\om,(X_n)_{n<T_k})}[T_{k+1}<T_0]=P_{k,\om'_k}[T_{k+1}<T_0]$. 
Observe that 
\[P_{k,\om'_k}[T_{k+1}<T_0]
=1-\om(k,-1,1)P_{k-1,\om'_{k+1}}[T_{k+1}>T_0]=1-\frac{2\om(k,-1,1)}{k+1}\]
by the gambler's ruin problem for the simple symmetric RW on $\Z$. Therefore, Lemma \ref{cos} applied to $Z_{n+1}:=2\om(n,-1,1)$ yields that the infinite product in (\ref{we}) $\PP$-a.s.\ vanishes. Therefore, the term on the right-hand side of (\ref{ben}) is zero. By symmetry the same holds for the expression in (\ref{ben2}). Consequently, by Lemma \ref{io}, any $z\in\Z$ is $P_0$-a.s.\ visited infinitely often. 
\end{proof}
\begin{remark}{\bf ($d=1$, first return time to the origin)} {\rm
\cite[Section 3.3]{AR05} deals with model (BW) for $d=1$ and any $p\in (0,1)$ and employs a physical approach to show  that $P_1[T_0>n]\sim n^{p-1}$ as $n\to\infty$. }
\end{remark}
If one allows more than just one cookie per site then the walk may become transient. Whether this  happens or not depends under certain conditions on the parameter 
\[\delta:=\EE\Bigg[\sum_{e\in\mathcal E, i\ge 1}\omega(0,e,i)e\Bigg]\]
if it exists as an element of $(\R\cup\{\pm\infty\})^d$. (We shall see in Sections \ref{sbal} and \ref{flt} that for $d=1$ the
parameter $\delta$ characterizes other phase transitions as well.) To
interpret $\delta$, observe that after consuming a cookie
$\om(z,\cdot,i)$ the walk is displaced on average by
$\sum_e\om(z,e,i)e$, which we call the \textit{drift} stored in that
cookie. Thus the parameter $\delta$ is, when it exists, the expected
total drift stored in a cookie stack.
\begin{remark}\label{d0}{\bf ($d=1$, w.l.o.g.\ $\delta\ge 0$)} {\rm 
Observe that for $d=1$ replacing $\omega$ by $1-\omega$ switches the sign of $\delta$  and that for each $\om$  the distribution of $X$ under $P_{0,1-\om}$ is the same as of $(-X_n)_{n\ge 0}$ under $P_{0,\om}$. Hence when investigating the behavior of $X$ one may restrict oneself to the case $\delta\ge 0$. 
}
\end{remark}
\begin{theorem}\label{rt}{\bf ($d=1$, recurrence, transience \cite[Th.\ 12]{Zer05} \cite[Th.\ 1]{KZ08})}\\
  Assume either
\begin{equation}\label{r1}
\mbox{{\rm (SE), (POS$_1$)} and
  $\PP[\om(0,1,\cdot)=(1,1/2,1/2,\ldots)]<1$} \qquad \text{or}
\end{equation}
\begin{equation}\label{r2}
\mbox{{\rm (IID), (BD),} and {\rm (WEL).}} 
\end{equation}
Then the ERW is recurrent if  
$\delta\in[-1,1]$, transient to the right if $\delta>1$
and transient to the left if $\delta<-1$.
\end{theorem}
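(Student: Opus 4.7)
By Remark \ref{d0} I may assume $\delta \geq 0$, after which it suffices to establish recurrence when $\delta \in [0,1]$ and transience to the right when $\delta > 1$. In both settings (r1) and (r2), the corollary above and Theorem \ref{supa}(d) already reduce matters to distinguishing recurrence from transience: under (POS$_1$) the submartingale property below rules out escape to $-\infty$ when $\delta\ge 0$, and in case (r2) the same is built into the branching-process encoding below. So the whole content of the theorem is a sharp determination of the critical threshold $\delta=1$, with the parameter $\delta$ interpreted as the expected total ``drift budget'' stored in a single stack of cookies.

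\emph{Case (r1), via the submartingale structure (following \cite[Th.\ 12]{Zer05}).} Under (POS$_1$), $(X_n)_{n\ge 0}$ is a $P_{0,\om}$-submartingale, so the Doob decomposition gives $X_n = M_n + A_n$ with $A_n$ increasing and adapted. I would analyze the exit probabilities $P_0[T_N < T_{-N}]$ as $N\to\infty$. The compensator $A_{T_N\wedge T_{-N}}$ is the sum over visited sites $z\in(-N,N)$ of the drifts stored in the cookies consumed at $z$, and a stationary-ergodic averaging shows that once a site has been visited often its cumulative contribution approaches the expected drift $\delta$. When $\delta > 1$ this compensator grows strictly faster than the diffusive fluctuations of $M_n$ can offset, driving $P_0[T_N<T_{-N}]\to 1$ and hence transience; when $\delta \le 1$ a careful comparison with simple-random-walk excursions shows that the walker returns to every level infinitely often. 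The auxiliary hypothesis $\PP[\om(0,1,\cdot)=(1,1/2,1/2,\ldots)]<1$ is precisely what excludes the degenerate case of a deterministic always-right first cookie that would trivially force transience.

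\emph{Case (r2), via branching processes with migration (following \cite[Th.\ 1]{KZ08}).} I would encode the walk by its down-crossings of edges: let $V_k$ denote the number of jumps from $k+1$ to $k$ performed during an excursion of the walk from $0$. Under (IID) and (BD), $(V_k)_{k\ge 0}$ is a Markov chain of branching-process-with-migration type. Each of the $V_{k-1}$ arrivals at site $k$ generates a critical geometric number of offspring (mean $1$) from the placebo part of the stack, while the first $M$ non-placebo cookies contribute a bounded additive migration term whose expectation, by a direct computation using $\sum_e e\,\om(k,e,i)$, is precisely $\delta$. Transience of the ERW to the right is equivalent to $V_k\to\infty$, and recurrence to $V_k$ hitting $0$ infinitely often; classical sharp theorems for critical BPMs with bounded migration then give the threshold at mean migration $=1$, i.e.\ at $\delta=1$.

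\textbf{Main obstacle.} In both arguments the delicate point is the boundary case $\delta=1$, where the compensator/migration is asymptotically balanced against the diffusive fluctuations of the underlying critical object (martingale in (r1), critical Galton--Watson in (r2)). In (r2) the (BD) hypothesis is essential for a sharp critical BPM dichotomy, since unbounded migration would allow much more intricate behavior at $\delta=1$; in (r1) the analogous challenge is to handle the compensator exactly at the threshold using only the stationary-ergodic structure, which is why the non-degeneracy hypothesis on $\om(0,1,\cdot)$ cannot be dispensed with.
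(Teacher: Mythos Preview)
Your overall strategy---Doob decomposition for (\ref{r1}), branching processes with migration for (\ref{r2})---matches the paper's, but in each case the key mechanism is misidentified.

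\textbf{Case (\ref{r1}).} The paper does not work with symmetric exit probabilities $P_0[T_N<T_{-N}]$ or compare the compensator against ``diffusive fluctuations of $M_n$.'' Instead it stops the martingale $M_n=X_n-D_n$ (where $D_n$ is the cumulative consumed drift) at the \emph{one-sided} hitting time $T_k$, so that optional stopping yields the exact identity $E_{0,\omega}[D_{T_k}]=E_{0,\omega}[X_{T_k}]=k$. This is then compared with the expected total drift $\approx k\delta$ stored in the cookie stacks at sites $0,\dots,k-1$. If $\delta<1$ and the walk were transient to the right, it would consume only finitely much drift from $(-\infty,0)$, so the available supply $k\delta+O(1)$ could not meet the demand $k$ for large $k$. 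If $\delta>1$ and the walk were recurrent, it would eventually eat essentially all cookies in $[0,k)$ before $T_k$, overshooting the budget $k$. The sharpness at $\delta=1$ comes from this exact accounting identity; your symmetric-exit approach produces no such identity (you do not know $E[X_{T_N\wedge T_{-N}}]$ without already knowing the exit probability), and the appeal to ``sites visited often contribute $\approx\delta$'' presupposes the recurrence you are trying to decide.

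\textbf{Case (\ref{r2}).} The branching-process idea is right, but your dichotomy is inverted. With your $V_k$ counting down-crossings $k+1\to k$ during an excursion from $0$: if the walk is recurrent, the excursion is finite and $V_k=0$ beyond its maximum, so $V$ dies out; if the walk is transient to the right, each level is visited only finitely often, so the $V_k$ are finite but there is no reason for $V_k\to\infty$ (for a ballistic walk they stay bounded). In the encoding actually used (see the sketch after Theorem~\ref{bal} and \cite{KZ08}), the process satisfies $V_0=0$, $V_{k+1}=F^{(k)}_{V_k+1}$ with drift $1-\delta$ on large values, and the recurrence/transience question for the ERW translates into a survival/extinction question for a related branching process with migration---transience to the right corresponds to extinction of the relevant crossing-count process, not explosion.
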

Note that in the case $\PP[\om(0,1)=(1,1/2,1/2,\ldots)]=1$ one has trivially $P_0$-a.s.\ $X_n=n$, which makes the walk transient to the right although $\delta=1$.
Observe also that the above criterion for recurrence/transience of ERW is quite different from the one for one-dimensional RWRE due to Solomon, see e.g.\ \cite[Th.\ 2.1.2]{Zei04}. 
\begin{proof}[Idea of the proof of Theorem \ref{rt} in case (\ref{r1})] 
Let $\delta\ge 0$ and assume that the walk is either recurrent or transient to the right.
Consider
\[M_n:=X_n-D_n,\quad\mbox{where}\quad D_n:=\sum_{m=0}^{n-1}\left(2\om\left(X_m,1,\#\{k\le m\mid X_k=X_m\}\right)-1\right)
\]
is the total drift stored in the cookies which have been consumed by the walker before time $n$. 
By the Doob-Meyer decomposition of the submartingale $(X_n)_{n \ge 0}$ the process $(M_n)_{n\ge 0}$ is a $P_{0,\om}$-martingale w.r.t.\ its canonical filtration.
A naive application of the optional stopping theorem  yields that the expected drift $E_{0,\om}[D_{T_k}]$ stored in the cookies which have been consumed by the walk by time $T_k$, $k\ge 0$, is equal to $E_{0,\om}[X_{T_k}]=k$. We now compare this quantity to the expected total drift $k\delta$ stored in all the cookies $\om(z,\cdot,i), i\ge 1, 0\le z<k$.
 
If the walk is transient to the right then it consumes only  finitely many cookies to the left of 0. Hence, if $\delta<1$ then even consuming all the cookies between 0 and $k$ before time $T_k$ would not be enough, for $k$ large, to satisfy the required total demand $E_{0,\om}[D_{T_k}]=k$. Hence the walk cannot be transient to the right if $\delta<1$. 

Conversely, if $\delta>1$ then the walk cannot afford to return too often to 0 before time $T_k$ because otherwise it would eventually eat most of the cookies between 0 and $k-1$ and thus exceed its dietary restriction $E_{0,\om}[D_{T_k}]=k$. This indicates that the walk cannot be recurrent in this case. 

This argument can be made precise in case (\ref{r1}). An even less formal argument is given in \cite[p.\ 2569]{AR05}.
\end{proof}
The proof of Theorem \ref{rt} in case (\ref{r2}) relies on branching processes similar to the ones used in the proof of Theorem \ref{bal}, which is sketched below.
\begin{problem}{\rm
Can one replace in (\ref{r2}) the assumption (IID) by (SE)?
}
\end{problem}
\begin{problem}{\rm Compute in the transient case the probability $P_0[\forall n\ge 1 : X_n\ne 0]$ never to return to the starting point. See \cite[Th.\ 18]{Zer05} for an example.
}
\end{problem}

\begin{remark}{\bf (Strips)} {\rm
A result similar to Theorem \ref{rt} has been shown in \cite[Th.\ 2]{Zer06} for ERWs on strips $\Z\times\{0,1,\ldots,L-1\}, L\in\N,$ under assumptions similar to (\ref{r1}).}
\end{remark}

\begin{remark}\label{haveit} {\bf ($d=1$, RW in ``have your cookie and eat it" environments)} {\rm    
A self-interacting RW $Y=(Y_n)_{n\ge 0}$ on $\Z$ called RW in a ``have your cookie and eat it" environment has been introduced in \cite{P10}. There, ``at each site $x$, the probability of jumping to the right is $\om(x)\in[1/2,1)$, until the first time the process jumps to the left from site $x$, from which time onward the probability of jumping to the right" from that site is 1/2. Here the sequence  $(\om(x))_{x\in\Z}$ (with values in $[1/2,1)^\Z$) is assumed to be stationary and ergodic. 

Note that  if $\om(x)=q$ for $x<0$ the RW $Y$ behaves on the negative integers in the same way as the RW perturbed at extrema, which we  described in Section \ref{ext}. There we also showed how a RW perturbed at extrema can be viewed as an ERW. The same applies to $Y$ with the difference that the measure $\PP$ which provides the environment for $Y$  does not lack spatial homogeneity but satisfies (SE). 
Note however that  neither (POS$_1$) nor  (BD) nor (WEL) are fulfilled by this $\PP$. Nevertheless, although $\PP$ does not meet the requirements of Theorem \ref{rt} the conclusion of Theorem \ref{rt} is still true for this $\PP$ as stated in \cite[Th.\ 2]{P10}.
 See also Remark \ref{p2}.
}
\end{remark}

\begin{remark}{\bf ($d=1$, RWRE as underlying process)}
{\rm In \cite{B} the simple symmetric RW as underlying process is replaced by a RWRE which is transient, say, to the left. On the first $M_z\ge 0$ visits to $z\in\Z$ the ERW is deterministically pushed to $z+1$, only on later visits to that site the RWRE environment takes effect. Here the random environment and $(M_z)_{z\in\Z}$ are i.i.d.\ with $\PP[M_0=0]>0$. Sufficient criteria for transience to the left, recurrence or transience to the right of the resulting ERW are given in terms of the tail of the distribution of $M_0$. }
\end{remark}
\subsection{Results for $d\ge 2$}
The following result is the multidimensional analogue to Theorem \ref{01a}. It  generalizes Kalikow's zero-one law  for directional transience of  multidimensional RWRE as stated in \cite[Prop.\  3]{ZM01}, see also \cite[Th.\ 3.1.2]{Zei04} and \cite[Th.\ 1.3]{HS11} for versions with different hypotheses.
\begin{theorem}\label{kal2} {\bf ($d\ge 1$, Kalikow-type zero-one law)}\\
Assume {\rm (IID)}, {\rm (EL)} and let $\ell\in\R^d\backslash\{0\}$. Then 
$ P_0[|X_n\cdot\ell|\to\infty]=P_0[A_\ell\cup A_{-\ell}]\in\{0,1\}.$
\end{theorem}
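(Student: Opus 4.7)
I treat the set identity and the zero-one law separately. For the identity $\{|X_n\cdot\ell|\to\infty\}=A_\ell\cup A_{-\ell}$, the walk's nearest-neighbour steps give $|X_{n+1}\cdot\ell-X_n\cdot\ell|\le\|\ell\|_1$, so on $\{|X_n\cdot\ell|\to\infty\}$ the sign of $X_n\cdot\ell$ eventually stabilises and $X_n\cdot\ell$ converges to $+\infty$ or $-\infty$; the reverse inclusion is trivial.

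For the zero-one law, write $Y_n:=X_n\cdot\ell$. Replacing $\ell$ by $-\ell$ if necessary I assume $P_0[A_\ell]>0$ and aim to show $P_0[A_\ell\cup A_{-\ell}]=1$. The scheme mirrors the proof of Theorem~\ref{01a}, with each one-dimensional ingredient replaced by a multidimensional analog. The crucial technical step is a multidimensional version of Lemma~\ref{D}, namely
\[
\alpha := P_0\bigl[A_\ell\cap\{Y_n\ge 0\text{ for all }n\ge 0\}\bigr] > 0.
\]
Since $\inf_n Y_n>-\infty$ $P_0$-a.s.\ on $A_\ell$, there exists $L\ge 0$ with $P_0[A_\ell,\,\inf_n Y_n\ge -L]>0$. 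Applying the strong Markov property at the first hitting time of the hyperplane $\{y\cdot\ell=-L\}$ and then an (IID)-shift sending this hyperplane to $\{y\cdot\ell=0\}$, in the spirit of~(\ref{xx}) in the proof of Lemma~\ref{D}, should transport this positivity to $\alpha>0$: after the shift the post-hit walker is fresh and the relevant upper half-space environment has the original IID law.

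Next comes the record-high argument. Let $\sigma_k$ denote the $k$-th record-high time of $Y_n$. On $\{\sigma_k<\infty\}$ the walker arrives at $X_{\sigma_k}$ for the first time and every prior position lies in $\{y\cdot\ell<Y_{\sigma_k}\}$, so $\psi(\omega,(X_i)_{i<\sigma_k})$ coincides with $\omega$ on $H_k:=\{y\cdot\ell\ge Y_{\sigma_k}\}$; by~(IID) this restriction is independent of $\F_{\sigma_k}$ and carries the original IID law. Combining the strong Markov property, an IID-shift sending $X_{\sigma_k}$ to the origin, and the previous display yields
\[
P_0[A_\ell\mid\F_{\sigma_k}]\ge\alpha\quad\text{on }\{\sigma_k<\infty\}.
\]
On $\{\sup_n Y_n=\infty\}$ every $\sigma_k$ is finite, so $\F_{\sigma_k}\uparrow\F_\infty$; L\'evy's 0-1 law forces $\won_{A_\ell}\ge\alpha$, and hence $\won_{A_\ell}=1$. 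Therefore $\{\sup_n Y_n=\infty\}\subseteq A_\ell$ $P_0$-a.s., and symmetrically $\{\inf_n Y_n=-\infty\}\subseteq A_{-\ell}$ $P_0$-a.s.\ whenever $P_0[A_{-\ell}]>0$.

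The residual event $E:=\{Y_n\text{ bounded}\}\cap(A_\ell\cup A_{-\ell})^c$ is what I expect to be the main obstacle. By Theorem~\ref{R2} the hypothesis $P_0[A_\ell]>0$ excludes a $P_0$-a.s.\ finite range, so the range is $P_0$-a.s.\ infinite; hence on $E$ the walker makes infinitely many first-visits to fresh sites inside a random $\ell$-slab. The difficulty is that these fresh visits are not $\ell$-records, so the upper half-space above a generic fresh site is only partially fresh and the clean record-high argument does not transfer verbatim. I would resolve this either by exploiting that for $d\ge 2$ an infinite range contained in a slab forces $\sup_n X_n\cdot\ell'=\infty$ for some $\ell'\perp\ell$ and then running a two-direction record argument combining an $\ell'$-record with a local $\ell$-escape via~(EL), or by a stochastic-monotonicity coupling showing that the escape probability $\alpha$ of the previous step survives the partial consumption of cookies in the upper half-space above a generic fresh site; either way this is the only point where I anticipate nontrivial work beyond routine multidimensional adaptation.
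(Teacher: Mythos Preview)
Your plan follows the paper's closely: both use Lemma~\ref{D2} plus an $\ell$-record argument to get $\{\sup_n Y_n=\infty\}\subseteq A_\ell$ a.s., and both identify the remaining obstacle as the event that $Y_n$ stays bounded above yet returns to a fixed slab infinitely often. There is, however, a real gap.

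Your residual event is too small. You only treat $E=\{Y_n\text{ bounded}\}$, tacitly using $\{\inf_n Y_n=-\infty\}\subseteq A_{-\ell}$; but that inclusion comes from the record argument in direction $-\ell$, which needs $P_0[A_{-\ell}]>0$. When $P_0[A_{-\ell}]=0$ the complement of $A_\ell\cup A_{-\ell}$ can contain the event $\{\sup_n Y_n<\infty,\ \inf_n Y_n=-\infty,\ \limsup_n Y_n>-\infty\}$: the walk makes excursions arbitrarily far in the $-\ell$ direction while returning infinitely often to a fixed slab $S_{[u,w]}$, possibly visiting only \emph{finitely many} distinct sites of that slab. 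In this scenario the range is not contained in any slab, so your ``$\ell'\perp\ell$-record'' idea does not apply, and there is no infinite supply of first visits inside $S_{[u,w]}$ to drive a local $\ell$-escape. The paper's Lemma~\ref{slab} handles precisely this case: when only finitely many slab sites are visited, one extracts via K\"onig's lemma an infinite self-avoiding path whose $\stackrel{\om}{\to}$-clusters are confined to a half-space and meet the bounding hyperplane in boundedly many points, and then derives a contradiction from stationarity of the events $B_x$ and a pigeonhole count. Neither of your proposed fixes (a),~(b) supplies anything of comparable strength. (A smaller point: your sketch for $\alpha>0$ applies strong Markov at the first hit of the level $-L$, but by that time the upper half-space $\{y\cdot\ell>-L\}$ has already been explored and its cookies partially consumed, so no IID-shift gives a fresh environment there; the correct route, as in Lemma~\ref{D2}, is excursion erasure combined with strong Markov at a record \emph{high}.)
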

The proof follows the one given in  \cite[Prop.\  3]{ZM01}. It uses the following two lemmas. The first one is the multidimensional analogue of Lemma \ref{D}. 
\begin{lemma}\label{D2}{\rm ($d\ge 1$)}
Assume {\rm (SE), (EL),} $\ell\in\R^d\backslash\{0\}$, and $P_0[A_\ell]>0$. Then
$P_0[\{\forall n\ge 0: X_n\cdot \ell\ge 0\}\cap A_\ell]>0.$
\end{lemma}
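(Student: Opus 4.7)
The plan is to imitate the two-step structure of the proof of Lemma~\ref{D}. First, I would establish a quenched analogue of (\ref{hm}): for any $z \in \Z^d$ and any environment $\omega$ satisfying the full-ellipticity condition $\omega(y,e,i) > 0$ for all $y \in \Z^d$, $e \in \mathcal{E}$, $i \in \N$ (which by (EL) and the countable intersection over $(y,e,i)$ holds $\PP$-a.s.), if $P_{z,\omega}[A_\ell] > 0$ then
\[
P_{z,\omega}\bigl[\{\forall n \ge 0 : X_n \cdot \ell \ge z \cdot \ell\} \cap A_\ell\bigr] > 0.
\]

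To prove this quenched claim, I would exploit that on $A_\ell$ the random number $K$ of downward crossings of the hyperplane $\{x \cdot \ell = z \cdot \ell\}$, i.e.\ indices $n$ with $X_{n-1} \cdot \ell \ge z \cdot \ell > X_n \cdot \ell$, is $P_{z,\omega}$-a.s.\ finite, so for some $N$ one has $P_{z,\omega}[A_\ell,\,K \le N] > 0$. Then induct on $N$. For the inductive step, let $\sigma$ denote the stopping time of the first downward crossing; at time $\sigma - 1$ the walker sits at some $y$ with $y \cdot \ell \ge z \cdot \ell$, and full ellipticity guarantees positive quenched conditional probability of stepping instead to $y + e^+$ for some $e^+ \in \mathcal{E}$ with $(y + e^+) \cdot \ell \ge z \cdot \ell$. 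A surgical argument via the strong Markov property---reconnecting the pre-$\sigma$ trajectory with a continuation realizing $A_\ell$ and having one fewer downward crossing---would close the induction. Given the quenched claim, the averaging step is easy: by $P_0[A_\ell] > 0$ and Fubini, $\PP[\{\omega : P_{0,\omega}[A_\ell] > 0\}] > 0$; intersecting with the $\PP$-full-measure set of fully elliptic $\omega$ and applying the claim with $z = 0$ yields $P_{0,\omega}[B] > 0$ on a set of positive $\PP$-measure (where $B := \{\forall n \ge 0 : X_n \cdot \ell \ge 0\} \cap A_\ell$), and averaging gives $P_0[B] > 0$.

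The main obstacle is making the inductive step rigorous. In dimension one, erasing a left-excursion from $z$ is clean because the walker necessarily exits and re-enters the half-line $[z, \infty)$ at the site $z$ itself, so the post-excursion continuation is unambiguous. In higher dimensions the walker may re-enter the half-space $\{x \cdot \ell \ge z \cdot \ell\}$ at a point different from where it exited, and substituting an upward step at $\sigma$ changes the walker's position and hence which cookies get consumed thereafter. Handling this requires a careful coupling or re-routing argument using full ellipticity of every cookie stack to build, with positive quenched probability, an alternative continuation from the replacement position that still realizes $A_\ell$ (or at least reduces $K$ by one), so that the induction can be iterated down to $N = 0$. Hypothesis (SE) plays only a minor role here, mainly ensuring the environment class matches that used elsewhere; the heart of the argument is quenched.
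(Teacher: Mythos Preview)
Your two-step outline (quenched claim for fully elliptic $\omega$, then average) matches what the paper does---it calls the proof ``similar to the proof of Lemma~\ref{D}'' and defers to \cite[Lem.~9]{Zer06}---and your averaging step under (EL) is fine; (SE) is indeed inessential there.

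The gap is the one you flag in the inductive step, and it is not a detail you can hand-wave away. After replacing $y\to X_\sigma$ by $y\to y+e^+$ you sit at $y+e^+$ in $\omega_\sigma:=\psi(\omega,(X_0,\dots,X_{\sigma-1}))$, but the only continuation you control starts at $X_\sigma$ in $\omega_\sigma$. Walking from $y+e^+$ down to $X_\sigma$ costs a crossing (no drop in $K$) and consumes extra cookies at $y+e^+$ and at $y$, so on arrival the environment is no longer $\omega_\sigma$ and the known continuation is lost; rerouting from $y+e^+$ to the re-entry point $X_\rho$ within $\{x\cdot\ell\ge0\}$ instead eats cookies in the upper half-space that the post-$\rho$ path may revisit, while leaving the first excursion's cookies below uneaten---and the post-$\rho$ path still makes $N-1$ excursions there. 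Either way the known continuation no longer applies, and positive probability of $A_\ell$ with $K\le N-1$ from $(y+e^+,\omega_\sigma)$ simply does not follow. Note that the one-dimensional argument for (\ref{hm}) is \emph{not} an induction with a local one-step fix: it erases all left-excursions from $z$ simultaneously and uses that the resulting path visits the sites strictly to the right of $z$ in the same order as the original, so cookie consumption there is unchanged and only the cookies at the single boundary site $z$ are used differently. It is this global path transformation, preserving the trace away from the boundary, that has to be carried out in $d\ge2$ (as in \cite[Lem.~9]{Zer06}); your local surgery does not achieve it.
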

The proof of this lemma  is similar to the proof of Lemma \ref{D}
outlined above and is identical to that of \cite[Lem.\  9]{Zer06}. (The general assumption (UEL) in \cite{Zer06} is not needed for the proof of \cite[Lem.\  9]{Zer06} and can be replaced by (EL).)

Let us denote for any interval $I\subseteq \R$ by $S_I$ the slab $\{x\in\Z^d: x\cdot \ell\in I\}$.
The second lemma gives conditions under which the ERW cannot visit any slab $S_{[u,w]}$, $u<w,$ infinitely often without ever visiting both neighboring half spaces. 
\begin{lemma}\label{slab}{\bf ($d\ge 1$)}
Assume {\rm (IID)}, $\EE[\om(0,e,1)]>0$ for all $e\in\mathcal E$ and let $\ell\in\R^d\backslash\{0\}$ and $u,w\in\R$ with $u<w$. Furthermore assume that the range of the ERW is $P_0$-a.s.\ infinite. Then
\begin{equation}\label{oma}
P_0[\{X_n\cdot\ell\ge u\ {\rm i.o.}\}\cap\{\forall n\ge 0: X_n\cdot\ell\le w\}]=0.
\end{equation} 
\end{lemma}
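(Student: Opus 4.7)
Set $\sigma := \inf\{n \ge 0 : X_n \cdot \ell > w\}$; we are to show $P_0[\{X_n \cdot \ell \ge u\ \text{i.o.}\} \cap \{\sigma = \infty\}]=0$. Pick $e_\star\in\mathcal E$ with $c_\star := e_\star\cdot\ell = \max_{e\in\mathcal E} e\cdot\ell > 0$ and set $T := S_{(w-c_\star,w]}$, the uppermost strip of the slab; one step in direction $e_\star$ from any site of $T$ lands above $w$. The guiding observation is that on $\{\sigma=\infty\}$ no site of $S_{(w,\infty)}$ is ever visited, so by (IID) the cookie stacks on that half-space form a fresh i.i.d.\ copy of $\PP$ that is independent of the walker's history at every time. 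In particular, each time the walker arrives at a previously unvisited site $z\in T$, the first cookie $\om(z,\cdot,1)$ there is itself fresh and so, under $P_0$, has conditional probability $\EE[\om(0,e_\star,1)] =: p_\star > 0$ of prescribing the step $e_\star$, which would exit above $w$ and contradict $\sigma = \infty$.

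\textbf{Execution.} Let $\nu_1<\nu_2<\ldots$ enumerate the successive times at which the walker visits a previously unvisited site of $T$. By (IID) and freshness, for every $k$,
\[
P_0\bigl[X_{\nu_k+1}=X_{\nu_k}+e_\star\,\big|\,\mathcal F_{\nu_k}\bigr]=p_\star>0\quad\text{on }\{\nu_k<\infty\}.
\]
Levy's conditional version of the second Borel--Cantelli lemma, applied to the events $\{X_{\nu_k+1}=X_{\nu_k}+e_\star\}$, yields that on $\{\nu_k<\infty\ \forall k\}$ this event occurs $P_0$-a.s.\ for infinitely many $k$; each occurrence forces $\sigma \le \nu_k+1<\infty$. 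Hence $P_0[\{\nu_k<\infty\ \forall k\}\cap\{\sigma=\infty\}]=0$, and it suffices to prove that on $\{X_n\cdot\ell\ge u\ \text{i.o.}\}\cap\{\sigma=\infty\}$ the walker $P_0$-a.s.\ visits infinitely many distinct sites of $T$.

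\textbf{Main obstacle.} Establishing this last point amounts to ruling out two degenerate scenarios. \emph{(a)} The walker visits $T$ only finitely often in total; then from some time on it is confined to $\{x\cdot\ell\le w-c_\star\}$, so we may repeat the whole argument with $w$ replaced by $w-c_\star$. An induction on $\lceil(w-u)/c_\star\rceil$ exhausts the slab, contradicting $\{X_n\cdot\ell\ge u\ \text{i.o.}\}$. \emph{(b)} The walker visits $T$ infinitely often through only finitely many distinct sites; then some $z\in T$ is visited i.o., and Lemma \ref{io} together with $\{\sigma=\infty\}$ forces $\mathcal C_z\subseteq\{x\cdot\ell\le w\}$, and in particular $z\not\stackrel{\om}{\to}z+e_\star$. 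Summing over the countably many $z\in T$ and exploiting (IID) to decorrelate the walker's visit structure from the fresh cookies at $z$, in the spirit of \cite[Lem.\ 2.2, 2.3]{HS11} and combined with the consequences of the infinite-range hypothesis furnished by Theorem \ref{R2}, one obtains that the $P_0$-probability of this trapping scenario vanishes. Making this summation rigorous, and in particular extracting the quantitative benefit of the hypothesis $\EE[\om(0,e,1)]>0$ for every $e\in\mathcal E$, is the chief technical difficulty of the proof.
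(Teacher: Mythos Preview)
Your treatment of case (I) (infinitely many fresh sites of the top strip $T$) via conditional Borel--Cantelli is correct, and in needing only a single $e_\star$-step it is tidier than the paper's analogue, which works with finite slices $S_{[u,w]}\cap(\Z\times\{y\})$ and takes $N$ consecutive $e_1$-steps through fresh territory. The induction in scenario (a) also goes through with routine bookkeeping.

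The genuine gap is scenario (b), which you yourself identify as the chief difficulty but do not resolve. The proposed mechanism---``decorrelating the walker's visit structure from the fresh cookies at $z$''---cannot work as written: if $z$ is visited infinitely often then \emph{every} cookie at $z$ has been consumed, so the event $z\not\stackrel{\om}{\to}z+e_\star$ is a function of the full stack at $z$ and is thoroughly entangled with the walk's history. The appeal to Theorem~\ref{R2} is also unavailable, since that result assumes (EL), whereas here only $\EE[\om(0,e,1)]>0$ is given. The paper handles its corresponding hard case $A\cap F$ (only finitely many distinct slab sites visited) by a substantially different route: assuming $P_0[A\cap F]>0$, it fixes a bound $k$ on the number of slab sites, uses K\"onig's lemma at a maximal recurrent site $x$ to extract a self-avoiding backward path along edges crossed infinitely often, and encodes the existence of such a path as a shift-covariant \emph{environment} event $B_x$ with $\PP[B_0]>0$. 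An ergodic-theorem density count then forces too many such paths through a common vertex, and the bound~(\ref{to3}) on $\#(\mathcal C_{y_n}\cap S_{\{x\cdot\ell\}})$ yields a contradiction by pigeonhole. Note also that your scenario (b) is strictly broader than the paper's $A\cap F$, since it permits infinitely many distinct sites in lower strips; so even importing the paper's $A\cap F$ argument verbatim would not by itself close your gap.
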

\begin{proof}[Proof of Lemma \ref{slab}] 
The proof is a bit more involved than the one of the corresponding statement \cite[Lem.\  4]{ZM01} for RWRE.
Denote by $A$ the event considered in (\ref{oma}) and let $F$ be the event that the walker visits only finitely many distinct elements of $S_{[u,w]}$. It is enough to show that $P_0[A\cap F]=0=P_0[A\cap F^c]$.

First, we consider $P_0[A\cap F^c]$.
Without loss of generality assume $\ell\cdot e_1>0$.  Then there is $N\in\N$ such that for all $x\in S_{[u,w]}$ we have $(x+Ne_1)\cdot\ell> w$.
On the event $A\cap F^c$ the walker visits infinitely many sets $S_{[u,w]}\cap (\Z\times \{y\})$, $y\in\Z^{d-1},$ since each such set is finite. Each time the walker visits such a set for the first time he has, due to (IID), independently of his past the $P_0$-probability $\EE[\om(0,e_1,1)]^N$ to walk in the next $N$ steps in direction $e_1$ thus reaching the half space $S_{(w,\infty)}$. Having infinitely many independent such chances the walker will $P_0$-a.s.\ not miss all of them. Therefore, $P_0[A\cap F^c]=0$.

Our treatment of $P_0[A\cap F]$ deviates from the corresponding step in the proof of \cite[Lem.\  4]{ZM01}. The proof is by contradiction. Assume that $P_0[A\cap F]>0$  and recall Definition \ref{too}. Then there is some $k\in\N$ such that $P_0[A\cap F_k]>0$, where $F_k$ is the event that the walker (a) visits at most $k$ elements of $S_{[u,w]}$, (b) has infinite range, (c) does not cross any directed edge $(y,z)$ infinitely often unless $y\stackrel{\om}{\to}z$, and (d) visits every element of $\mathcal C_y$ infinitely often whenever $y\in\Z^d$ is visited infinitely  often. (The events in (b)-(d) have full $P_0$-measure.)

Denote for $x\in\Z^d$  by $B_x\subseteq \Om$ the event that there is a self-avoiding nearest-neighbor path $(y_n)_{n\ge 0}$ starting at $y_0=x$ such that for all $n\in\N$,
\begin{eqnarray}
 x&\in&\mathcal C_{y_n},\label{to}\\
 \mathcal C_{y_n}&\subseteq& S_{(-\infty,x\cdot\ell]}\quad\mbox{ and}\label{to2}\\
\#\left(\mathcal C_{y_n}\cap S_{\{x\cdot\ell\}}\right)&\le& k.\label{to3}
\end{eqnarray}
If $B_x$ occurs denote by $(y_n(x))_{n\ge 0}$ a path with these properties. Choose it according to some deterministic rule if there are several such paths.

On the event $A\cap F_k$ there is at least one random vertex $x$ which is visited infinitely often and maximizes $x\cdot\ell$. The maximal number of such vertices is at most $k$ due to (a).
For each such $x$ the event $B_x$ occurs. Indeed, by K\"onig's lemma \cite[Lemma A]{Koe27} for directed graphs and (b) there is a random self-avoiding nearest-neighbor path $(y_n)_{n\ge 0}$ starting at $y_0=x$ such that all its directed edges $(y_n,y_{n-1}),\ n\in\N,$ are crossed infinitely many times. 
Any such path satisfies   (\ref{to})-(\ref{to3}) for all $n\in\N$. (\ref{to}) holds since 
$y_m\stackrel{\om}{\to}y_{m-1}$ for all $m\in\N$ due to (c).
Moreover,
since $x$ maximizes $y\mapsto y\cdot \ell$ among all vertices which are visited infinitely often (d)  implies (\ref{to2}). And since the walker visits at most $k$ vertices which maximize $y\mapsto y\cdot \ell$ we also have (\ref{to3}).

Therefore, $0<P_0[A\cap F_k]\le \PP[\bigcup_x B_x]$. Hence $\PP[B_x]>0$ for some $x\in\Z^d$. Due to (IID),  $(\won_{B_x})_{x\in\Z^d}$ is stationary  under $\PP$ w.r.t.\ the shifts on $\Z^d$. Hence $\PP[B_0]=\PP[B_x]>0$. Choose $m\in\N$ with $m+1\ge 3k/\PP[B_0]$. Since  $(\won_{B_x})_{x\in\Z^d}$ is also ergodic there is by the ergodic theorem, see e.g.\ \cite[Th.\ A.11.5]{El}, $\PP$-a.s.\ some random $L\in\N$ such that 
\begin{equation}\label{as}
(m+1)\sum_{x\in[-L,L]^d}\won_{B_x}\ >\ \frac{m+1}{2}\, \PP[B_0]\ \#[-L,L]^d\ \ge \ k\ \#[-L-m,L+m]^d.
\end{equation}
For all $n=0,\ldots,m$ and all $x\in[-L,L]^d$ for which $B_x$ occurs
we have $y_n(x)\in[-L-m,L+m]^d$ since $(y_n(x))_{n=0,\ldots,m}$ is a
nearest-neighbor path starting at $x$. By (\ref{as}) there are
strictly more than $k\ \#[-L-m,L+m]^d$ such pairs $(n,x)$. It follows from the 
pigeonhole principle that  there are $y\in[-L-m,L+m]^d$ and pairwise
distinct pairs
$(n_0,x_0),\ldots,(n_k,x_k)\in\{0,\ldots,m\}\times[-L,L]^d$ such that
$y=y_{n_i}(x_i)$ for all $i=0,\ldots,k$.  By (\ref{to}) and (\ref{to2}),
$x_0,\ldots,x_k\in\mathcal C_y\subseteq S_{(-\infty,\min_i\,
  x_i\cdot\ell]}.$ Therefore,
$x_0\cdot\ell=\ldots=x_k\cdot\ell$ and hence $x_0,\ldots,x_k\in\mathcal C_y\cap
S_{\{x_0\cdot\ell\}}$. However, since every path
$(y_{n}(x_i))_{n\ge 0},$ $i=0,\ldots,k,$ is self-avoiding the
sites $x_0,\ldots,x_k$ are pairwise distinct as well. But this
contradicts (\ref{to3}). 
\end{proof}

\begin{proof}[Sketch of the proof of Theorem \ref{kal2}] By Theorem
  \ref{R2} the range of the ERW is either finite or infinite. In the
  first case the statement is obvious. In the second case the proof
  goes along the same lines as the one of \cite[Prop.\ 3]{ZM01}. Since
  the increments of the ERW are uniformly bounded $P_0$-a.s.\ exactly one
  of the three events $A_\ell$ or $A_{-\ell}$ or
  $\bigcup_{u<w}A_{u,w}$ occurs, where $A_{u,w}:=\{X_n\cdot\ell\in [u,w]\ {\rm
    i.o.}\}$. Suppose that $P_0[A_\ell]>0$ and let $u<w$. By Lemma \ref{slab}
  we have $P_0$-a.s.\ $\sup_nX_n\cdot \ell=\infty$ on 
  $A_{u,w}$.  However, each time the process $(X_n\cdot\ell)_n$
  reaches a new maximum $x>w$ it has by Lemma \ref{D2} and (IID) the
  same positive chance never to fall back again below $x$. Consequently, after
  a geometric number of trials $(X_n\cdot\ell)_n$ has reached a level
  larger than $w$ below which it will never fall again. Hence,
  $P_0$-a.s.\ $A_{u,w}\subseteq A^c_{u,w}$, i.e.\ $P_0[A_{u,w}]=0$.
\end{proof}

The main result of \cite{BW03} is that under assumption (BW) the ERW
is transient in direction $e_1$ whenever $d\ge 2$. For the proof, one
first couples the ERW $X$ in the way described in Section \ref{ss11}
to a simple symmetric RW $Y=(Y_n)_{n\ge 0}$. Then one considers
so-called \textit{tan points} of $Y$. These are sites $x\in\Z^d$
which are visited by $Y$ prior to any other point on the ``sun
ray" $\{x+ke_1\mid k\in\N\}$.
Every time when $Y$ reaches a tan point, $X$ reaches a fresh site and
eats a non-placebo cookie which pushes it in direction $e_1$. Showing
that $Y$ has enough tan points then implies transience of $X$ in
direction $e_1$.

However, it seems difficult to adapt the method of tan points to other
settings in which the drift is not along a coordinate direction or the
excitement occurs at later visits. However, by combining the
martingale approach explained after Theorem \ref{rt} and the method of
the environment viewed from the particle this result was extended in
\cite[Th.\ 1]{Zer06} to the following more general result.
\begin{theorem}\label{rt2}{\bf ($d\ge 2$, directional transience)}\\
Assume {\rm (IID)} and {\rm (UEL)} and let $\ell\in\R^d\backslash\{0\}$ such that 
{\rm (POS$_\ell$)} holds with
$\delta\cdot \ell>0$.
Then the ERW is transient in direction $\ell$, i.e.\ $P_0[A_\ell]=1$.
\end{theorem}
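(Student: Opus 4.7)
The plan is to prove the stronger statement $X_n\cdot\ell/n\to\delta\cdot\ell>0$ $P_0$-a.s., which directly gives $P_0[A_\ell]=1$. As preliminary observations: {\rm (UEL)} forces $b_e=0$ for every $e\in\mathcal E$, so by Theorem \ref{R2} the range of $X$ is $P_0$-a.s.\ infinite, and Theorem \ref{kal2} provides the Kalikow zero-one law for $A_\ell\cup A_{-\ell}$; either could serve as a fallback that would reduce the task to a positivity statement, but the law-of-large-numbers route is stronger and more natural given {\rm (POS$_\ell$)}.

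The main step is a Doob--Meyer decomposition of the submartingale $(X_n\cdot\ell)_{n\ge 0}$. Write $X_n\cdot\ell=M_n+D_n$ with
\[
D_n:=\sum_{m=0}^{n-1}d_\ell\bigl(\om,X_m,V_m(X_m)+1\bigr),\qquad d_\ell(\om,z,i):=\sum_{e\in\mathcal E}\om(z,e,i)\,e\cdot\ell,
\]
and $V_m(z):=\#\{0\le k<m:X_k=z\}$. By {\rm (POS$_\ell$)} the compensator $D_n$ is nondecreasing, and $M_n$ is a $P_{0,\om}$-martingale with uniformly bounded increments; thus the martingale strong law gives $M_n/n\to 0$ $P_0$-a.s. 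The crux is to show $D_n/n\to\delta\cdot\ell$ $P_0$-a.s. For this I would invoke the environment viewed from the particle: set $\bar\om_m:=\tau_{X_m}\psi\bigl(\om,(X_i)_{0\le i<m}\bigr)$, where $\tau_x$ is the spatial shift sending $x$ to the origin. Under {\rm (IID)}, $(\bar\om_m)_{m\ge 0}$ is a Markov chain on $\Om$ and $D_n/n$ is precisely the Ces\`aro mean of $d_\ell(\bar\om_m,0,1)$. Using {\rm (UEL)} to furnish tightness and bounded densities, a Ces\`aro/Krylov--Bogoliubov procedure produces an invariant probability $\mathbb Q\ll\PP$ for this chain; Birkhoff's ergodic theorem then identifies $\lim_n D_n/n$ with $\int d_\ell(\cdot,0,1)\,d\mathbb Q$, and a conservation-of-cookies bookkeeping---each site that is ever revisited contributes, in the limit, its full stack drift---matches this integral with $\delta\cdot\ell$. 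Combining the two limits yields $X_n\cdot\ell/n\to\delta\cdot\ell>0$.

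The main obstacle is this last identification. The environment viewed from the particle has no evident reversible structure, so the invariant measure $\mathbb Q$ is not explicit and typically differs from $\PP$ (cookies at visited sites have already been consumed). Uniform ellipticity is indispensable, both to secure tightness of the Ces\`aro averages and to propagate $\PP$-a.s.\ statements along the chain. An alternative route---closer in spirit to the martingale sketch after Theorem \ref{rt}---would be to bypass $\mathbb Q$ by working with slabs transverse to $\ell$ and comparing, via optional stopping, the expected compensator at a slab-hitting time with the expected drift stored in that slab's cookies; {\rm (IID)} together with {\rm (UEL)} would then supply the renewal-type control of backtracking needed to close the argument.
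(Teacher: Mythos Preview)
Your choice of ingredients matches the paper's own description: the paper does not prove Theorem~\ref{rt2} in detail but attributes it to \cite{Zer06}, noting that the argument combines the martingale decomposition sketched after Theorem~\ref{rt} with the environment viewed from the particle. So the framework is the right one.

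The central claim, however, is false: $D_n/n$ does \emph{not} converge to $\delta\cdot\ell$ in general, and hence neither does $X_n\cdot\ell/n$. Take the model (BW) with $\ell=e_1$: there $d_\ell(\om,z,i)=0$ for all $i\ge 2$, so $D_n=(2p-1)d^{-1}R_n$ with $R_n=\#\{X_0,\ldots,X_{n-1}\}$ the range, and your claim becomes $R_n/n\to 1$. But (UEL) rules this out: an immediate backtrack has conditional probability at least $\kappa$ at every step, so $E_0[R_n]/n\le 1-\kappa+o(1)$, which prevents $R_n/n\to 1$ a.s.\ by bounded convergence. The error in the ``conservation-of-cookies'' step is a conflation of two normalizations: $\delta\cdot\ell$ is the expected drift stored per \emph{site} (summed over an entire stack), whereas $D_n/n$ is the average drift consumed per \emph{step}. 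Equating them would require the walk to be at once self-avoiding and to exhaust every visited stack, which is contradictory. In fact the limit of $D_n/n$, when it exists, is the directional speed $v\cdot\ell$, and the paper notes separately that $v$ is not determined by $\delta$ even in $d=1$.

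The theorem asks only for $P_0[A_\ell]=1$, not for the value of the speed; your plan goes wrong by aiming at a stronger statement that is simply not true. What the martingale and environment-from-the-particle machinery can deliver here is qualitative control on the growth of $D_n$---enough to force $X_n\cdot\ell\to\infty$, but not a formula for the rate. Your alternative slab/optional-stopping outline at the end is in fact closer in spirit to the argument the paper points to than your primary route.
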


\begin{problem}\label{arx}{\rm {\bf ($d\ge 2$)} Find conditions which
    imply the zero-one law $P_0[A_\ell]\in\{0,1\}$ for all
    $\ell\in\R^d\backslash\{0\}$ (cf.\ Problem \ref{op1}).  Are e.g.\
    (IID), (BD) and (UEL) sufficient (just as for $d=1$, see
    (\ref{r2}) of Theorem \ref{rt})?  

For RWRE this zero-one law holds
    under (IID) and (EL) if $d=2$, see \cite[Th.\ 1]{ZM01}. According
    to \cite[Th.\ 1.5]{HS11} the assumption (EL) can be dropped. For
    $d\ge 3$ it is an important open problem, see e.g.\ \cite[Open
    Problem 1.4]{DR10}.
}
\end{problem}

Not much is known in general 
about recurrence and transience in the sense of Definition \ref{d1} for $d\ge 2$.
\begin{problem} {\rm ($d\ge 2$)
Assume (IID) and (UEL). Is the ERW either recurrent or transient?}
\end{problem}
\begin{rp}\label{berw}{\bf ($d\ge 2$, balanced ERW)} {\rm The work
    \cite{BKS11} introduces so-called $M(d_1,d_2)$-RW. This is an ERW on $\Z^{d_1+d_2}$  which upon the first visit to a vertex performs a $d_1$-dimensional simple symmetric RW step within the first $d_1$-coordinates and upon later visits to that same vertex performs a simple symmetric RW step within the last $d_2$-coordinates. It is  proved that the $M(2,2)$-RW is transient and conjectured that the  $M(1,2)$- and the $M(2,1)$-walk are transient as well while the $M(1,1)$-walk is believed to be recurrent.

More generally, consider measures $\PP$ which are balanced in the sense that $\PP$-a.s.\ $\om(z,e,i)=\om(z,-e,i)$ for all $z\in\Z^d, e\in\mathcal E, i\in\N$. It follows from recent work \cite[Th.\ 1.2, Prop.\  1.4]{PPS} that any balanced ERW satisfying (UEL) in $d\ge 3$ returns $P_0$-a.s.\ only finitely often to 0 if there are at most $\max\{2,(d-1)/2\}$ cookies $\om_1,\ldots,\om_k\in\M_{\mathcal E}$
such that for all $z\in\Z^d$ and $i\in\N$ there is $\PP$-a.s.\ some $j\in\{1,\ldots,k\}$ such that $\om(z,\cdot,i)=\om_j$.

One may wonder, whether any balanced ERW satisfying  (IID) (or (SE)) and (UEL) 
is recurrent if $d=2$ and transient if $d\ge 3$.  For RWRE this is
true, see \cite[Th.\ 3.3.22]{Zei04}.  }
\end{rp}
\begin{remark}{\bf ($d=3$, ERW against a wall)}
{\rm \cite{ABK08} deals with an ERW in a special spatially non-homogeneous environment $\om$ on $\Z^3$. The walk starts at 0, is pushed down whenever it reaches a new site, but is prohibited to enter the lower half space (i.e.\ $\om((x,y,z),-e_3,1)=1$ and $\om((x,y,0),-e_3,i)=0$ for all $x,y,z\in\Z, z\ne 0, i\in\N$) and otherwise behaves like a simple symmetric RW. It is  shown, in particular,  that this walk returns a.s.\ infinitely often to its starting point. 

  A similar statement regarding recurrence of an ERW which is
  ``excited to the origin" is made in \cite[(1)]{K07}. In \cite{K07}
  one can find related open problems.  }
\end{remark}

\section{Strong law of large numbers}\label{slln}
We say that the ERW $X$ satisfies a strong law of large numbers if there is
a non-random $v\in\R^d$, called the {\em velocity} or {\em speed} of the walk, such that
\[\lim_{n\to\infty}\frac{X_n}{n}=v\quad\mbox{$P_0$-a.s..}
\]
\subsection{Results for $d=1$}
\begin{theorem}\label{lln}{\bf ($d=1$, law of large numbers)}\\
If {\rm (SE)} and {\rm (POS$_1$)} hold then $X$ satisfies a strong law of large numbers with speed $v\in[0,1]$.
 If {\rm (SE)} and the 0-1 law {\rm (\ref{si})} hold then 
$X$ satisfies a strong law of large numbers with speed $v\in[-1,1]$.
\end{theorem}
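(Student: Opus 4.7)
Proof plan:

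I decompose the sample space according to the walk's asymptotic behavior, using that $|X_{n+1}-X_n|=1$ automatically confines any a.s.\ limit of $X_n/n$ to $[-1,1]$. Under (POS$_1$), $(X_n)$ is a $P_{0,\omega}$-submartingale with bounded increments. Applying Doob's convergence theorem to the stopped processes $X_{n\wedge\sigma_M}$ with $\sigma_M:=\inf\{n:X_n\ge M\}$, the event $\{\sup_nX_n<\infty\}$ would force a.s.\ convergence of $X_n$ to an integrable, hence finite, limit, which is impossible for a non-stopping integer-valued nearest-neighbor walk. Hence $\sup_nX_n=\infty$ $P_0$-a.s., and a symmetric argument shows $P_0[A_{-1}]=0$. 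So $P_0$-a.s.\ either $X_n\to+\infty$ (event $A_1$) or $0$, and by Lemma \ref{io} every accessible site, is visited infinitely often (recurrent event $R$). In the second part of the theorem, the hypothesis (\ref{si}) together with Lemma \ref{io} and Theorem \ref{supa} yields a four-way partition into $F_{\rm bd}\cup A_1\cup A_{-1}\cup R$ with each piece of $P_0$-probability $0$ or $1$.

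In the trivial and recurrent subcases, the target is $X_n/n\to 0$. On the finite-range event this is immediate. On $R$, I would use the Doob decomposition $X_n=M_n+D_n$, where $M_n$ is the $P_{0,\omega}$-martingale part and the predictable compensator is $D_n=\sum_{k<n}\sum_{e\in\mathcal E}e\cdot \omega(X_k,e,\#\{j\le k:X_j=X_k\})$. Bounded increments of $M_n$ give $M_n/n\to 0$ $P_0$-a.s., so it suffices to show $D_n/n\to 0$ on $R$. I would obtain this from an ergodic theorem applied to the environment-viewed-from-the-walker process: on the recurrent event each visited site is returned to infinitely often and its cookies are eventually exhausted, so the time-averaged drift per step tends to $0$.

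The main case is the transient one. On $A_1$, the hitting time $T_n:=\inf\{k:X_k=n\}$ is $P_0$-a.s.\ finite for every $n\ge 0$, and the nearest-neighbor sandwich $T_{X_n}\le n<T_{X_n+1}$ reduces the SLLN to showing $T_n/n\to \alpha\in[1,\infty]$ $P_0$-a.s. I would do so via a regeneration construction: define $R_1<R_2<\cdots$ to be the successive instants at which $X$ reaches a fresh maximum which is never revisited from above; their existence and infinitude on $A_1$ follow by iterating Lemma \ref{4}(\ref{sub}). The pairs $(R_{k+1}-R_k,\,X_{R_{k+1}}-X_{R_k})$ form a stationary ergodic sequence on a suitable shift-invariant probability space built from (SE), so Birkhoff's ergodic theorem yields simultaneous convergence of $R_k/k$ and $X_{R_k}/k$, and hence $X_n/n\to v\in[0,1]$. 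In the second part of the theorem, the case $A_{-1}$ is handled by the symmetry of Remark \ref{d0}, giving $v\in[-1,1]$. The principal obstacle is the regeneration analysis under the weak hypothesis (SE) rather than (IID): consecutive excursions between regenerations are not independent, so the stationarity and ergodicity of the regeneration sequence must be extracted via an environment-at-regeneration-epochs construction rather than by an elementary i.i.d.\ renewal argument.
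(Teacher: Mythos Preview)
Your overall strategy---establish $\sup_n X_n=\infty$ $P_0$-a.s.\ via the submartingale property, then control $X_n/n$ through hitting times---is in the right spirit; the paper itself defers to \cite[Th.~13]{Zer05} and \cite[Prop.~13]{KZ08}. However, several steps are genuine gaps rather than routine details.

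The principal one is the point you flag yourself: under (SE) alone, the claim that the regeneration increments $(R_{k+1}-R_k,\,X_{R_{k+1}}-X_{R_k})$ form a stationary ergodic sequence is not justified. At a regeneration epoch the environment to the right of $X_{R_k}$ is indeed a spatial shift of $\omega$, but the shift amount $X_{R_k}$ is random and depends on the whole environment; composing the shift with such a random, environment-dependent offset need not preserve stationarity, and you give no construction that does. Calling this ``the principal obstacle'' is accurate, but the proposal does not overcome it, and this is precisely where the work in \cite{Zer05,KZ08} lies.

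There are further issues. For the second statement you invoke Theorem~\ref{supa} to get a four-way $0$--$1$ partition $F_{\rm bd}\cup A_1\cup A_{-1}\cup R$, but Theorem~\ref{supa} assumes (EL), which is not among the hypotheses here; the paper's argument is coarser and needs only~\eqref{si}, applying \cite[Prop.~13]{KZ08} whenever $P_0[\sup_n X_n=\infty]=1$ or $P_0[\inf_n X_n=-\infty]=1$ and noting $X_n/n\to 0$ trivially when both are~$0$. Your recurrent-case argument---``cookies are eventually exhausted so $D_n/n\to 0$''---is only heuristic: $D_n$ non-decreasing with $D_n/n\to 0$ along a subsequence of return times does not force $D_n/n\to 0$ along the full sequence without control on the gaps between returns. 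Finally, the sandwich $T_{X_n}\le n<T_{X_n+1}$ is incorrect as written (the walk may have visited $X_n+1$ before time $n$); the correct inequality uses the running maximum, and passing from $\max_{k\le n}X_k/n\to v$ to $X_n/n\to v$ when $v>0$ again requires control of backtracking, i.e., the regeneration structure you have not established.
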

\begin{proof} For the first statement see \cite[Th.\ 13]{Zer05}.
For the second statement observe that although the proof of \cite[Prop.\ 13]{KZ08} has been stated under stronger conditions it provides a proof of the second
claim if $P_0\left[\sup_nX_n=\infty\right]=1$ or
$P_0\left[\inf_nX_n=-\infty\right]=1$. In the remaining case, when
both probabilities are 0, one trivially has $P_0$-a.s.\
$X_n/n\to 0$.
\end{proof}
Next we discuss some properties of the speed as a function of $\PP$.
 We
    call a cookie $\om_2(z,\cdot,i)$ \textit{stronger} than another
    cookie $\om_1(z,\cdot,i)$ if it pushes its consumer more to the
    right, i.e.\ if $\om_2(z,1,i)\ge \om_1(z,1,i)$. One should expect
    that making cookies stronger does not decrease the speed of the walk. More precisely, an environment $\om_2$ is
    called stronger than another environment $\om_1$ if each cookie
    $\om_2(z,\cdot,i)$ in $\om_2$ is stronger than the corresponding
    cookie $\om_1(z,\cdot,i)$ in $\om_1$. A probability measure
    $\PP_2$ on $\Om$ is called stronger than another probability
    measure $\PP_1$ if there is a probability measure on
    $\{(\om_1,\om_2)\in\Om^2\mid\mbox{$\om_2$ is stronger than
      $\om_1$}\}$ with first marginal $\PP_1$ and second marginal
    $\PP_2$. The following statement is contained in \cite[Th.\ 17]{Zer05} under the additional assumption (POS$_1$).
\begin{prop}\label{mono} {\bf ($d=1$, monotonicity of $v$)} If $\PP_2$ is stronger than $\PP_1$ and if the ERW satisfies under both measures $\EE_i[P_{0,\om}[\cdot]]$, $i=1,2$, a law of large numbers with speed $v_i$  then $v_2\ge v_1$.
\end{prop}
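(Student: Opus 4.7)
The plan is to prove Proposition~\ref{mono} by constructing a monotone coupling of the two ERWs on a common probability space. By the hypothesis, fix a probability measure $\wPP$ on $\Om\times\Om$ with marginals $\PP_1,\PP_2$ supported on pairs $(\om_1,\om_2)$ with $\om_1(z,1,i)\le\om_2(z,1,i)$ for all $z\in\Z$ and $i\in\N$. On an enlarged probability space I would sample an i.i.d.\ family $\{U(z,i):z\in\Z,i\in\N\}$ of uniform $[0,1]$ random variables, independent of the environment pair, and define for $k=1,2$ a process $X^{(k)}$ starting at $0$ via
\[
X^{(k)}_{n+1}-X^{(k)}_n=\begin{cases}+1&\text{if }U(X^{(k)}_n,L^{(k)}_n(X^{(k)}_n))\le\om_k(X^{(k)}_n,1,L^{(k)}_n(X^{(k)}_n)),\\-1&\text{otherwise,}\end{cases}
\]
where $L^{(k)}_n(z):=\#\{0\le m\le n:X^{(k)}_m=z\}$. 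A direct check would show that each $X^{(k)}$ has the annealed law of the ERW under $\PP_k$.

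The target is the pathwise inequality $X^{(1)}_n\le X^{(2)}_n$ for every $n\ge 0$, almost surely; granted this, $v_1=\lim_n X^{(1)}_n/n\le\lim_n X^{(2)}_n/n=v_2$ follows at once from the LLN hypothesis. I would attempt the induction on $n$: the generic step, when $|X^{(1)}_n-X^{(2)}_n|\ge 2$, is automatic since both walks move by exactly one per unit time, and in the favorable boundary case $X^{(1)}_n=X^{(2)}_n=z$ with matching local times $L^{(1)}_n(z)=L^{(2)}_n(z)=i$ the two walks consult the same uniform $U(z,i)$, so the pointwise cookie inequality $\om_1(z,1,i)\le\om_2(z,1,i)$ ensures that $X^{(2)}$ steps right whenever $X^{(1)}$ does, preserving the order.

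The main obstacle is the remaining configurations: either $X^{(2)}_n-X^{(1)}_n=1$, in which case the two walks sit at distinct sites and consult independent uniforms so that a direct swap is a priori possible, or $X^{(1)}_n=X^{(2)}_n$ with mismatched local times there, in which case the two consulted uniforms are again independent. These situations are genuinely dangerous: starting from a common site with aligned local times, the walks can separate, perform asymmetric excursions on the two sides, and reconverge with mismatched visit counts, after which the coupling provides no cookie comparison. Closing this gap is the crux of the argument; the options I would pursue, in rough order of preference, are (i) a synchronization lemma, based on a careful parity/excursion count for nearest-neighbor paths on $\Z$, that rules out mismatched reconvergences under a suitably refined coupling; (ii) a modified coupling in which, precisely at each potential desynchronization instant, the lagging walk is fed independent uniforms in a manner that preserves its quenched marginal while enforcing the invariant; or (iii) an averaged argument bounding $E[X^{(2)}_n-X^{(1)}_n]$ from below by $0$ via a conditioning on the joint environment, relying on the LLN hypothesis to convert the expectation bound into the speed inequality. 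Any of these routes yields the proposition; resolving this synchronization issue is the principal technical hurdle and follows the spirit of the POS$_1$-based coupling in \cite[Th.\ 17]{Zer05}.
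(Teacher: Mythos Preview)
Your proposal has a genuine gap: you correctly isolate the synchronization problem (same site, mismatched local times; or adjacent sites with independent uniforms) but none of your three suggested fixes is carried out, and option (i) in particular fails as stated. A concrete obstruction: take the paths $X^{(1)}:0,-1,0,1$ and $X^{(2)}:0,1,0,1$. The ordering $X^{(1)}_n\le X^{(2)}_n$ holds through $n=3$, but at time $3$ both walks sit at $1$ with local times $1$ and $2$ respectively, so they consult independent uniforms $U(1,1)$ and $U(1,2)$ and the inductive invariant can break at the next step. Mismatched reconvergences are therefore not ruled out by any parity/excursion count, and the naive site--local-time coupling does not yield pathwise domination $X^{(1)}_n\le X^{(2)}_n$.

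The paper avoids this difficulty entirely by switching from positions to hitting times. It first proves the deterministic relation
\[
\lim_{k\to\infty}\frac{k}{T_k}=\max\{v_i,0\}\qquad\EE_i[P_{0,\om}[\cdot]]\text{-a.s.},
\]
arguing separately on $\{\sup_n X_n=\infty\}$ (where $(k/T_k)_k$ is a subsequence of $(X_n/n)_n$) and on its complement (where eventually $T_k=\infty$ and $v_i\le 0$). It then invokes the coupling of \cite[Th.~17]{Zer05}, which compares $T_k$ rather than $X_n$: one couples, site by site, the Bernoulli sequences of right/left steps so that the stronger environment produces no more left steps before any given number of right steps, yielding $T_k^{(2)}\le T_k^{(1)}$ without ever needing the two walks to be synchronized in position or local time. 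This gives $\max\{v_2,0\}\ge\max\{v_1,0\}$; the symmetric argument with $T_{-k}$ gives $\min\{v_2,0\}\ge\min\{v_1,0\}$, and the two together force $v_2\ge v_1$. The point is that the hitting-time decomposition is \emph{local in space} (contributions from distinct sites decouple once one conditions on the number of upcrossings needed), which is exactly what your position-based coupling lacks.
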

\begin{proof} First, we show that for $i=1,2$,
\begin{equation}\label{frac}
\lim_{k\to\infty}\frac{k}{T_k}=\max\{v_i,0\}\qquad\mbox{$\EE_i[P_{0,\om}[\cdot]]$-a.s..}
\end{equation}
If $\sup_n X_n=\infty$ then the sequence $(k/T_k)_{k\ge 0}$ is a subsequence of $(X_n/n)_{n\ge 0}$.
Therefore, $\EE_i[P_{0,\om}[\cdot]]$-a.s.\
$\{\sup_{n} X_n=\infty\}\subseteq\{\lim_{k}k/T_k=v_i\ge 0\}.$
On the other hand, if $\sup_n X_n<\infty$ then $T_k=\infty$ for some $k$ and thus
$\EE_i[P_{0,\om}[\cdot]]$-a.s.\
$\{\sup_{n} X_n<\infty\}\subseteq\{\lim_{k}k/T_k=0\ge v_i\}.$
In any case (\ref{frac}) holds. From this one can conclude like in the proof of \cite[Th.\ 17]{Zer05} that
$\max\{v_2,0\}\ge \max\{v_1,0\}$. (The general assumption (POS$_1$) present in \cite{Zer05} is not needed for this conclusion.) Analogously, by considering $T_{-k}$ instead of $T_k$ we obtain $\min\{v_2,0\}\ge \min\{v_1,0\}$. Hence $v_2\ge v_1$.
\end{proof}
\begin{remark}{\rm {\bf
($d=1$,  monotonicity of $v$ w.r.t.\ swapping cookies)} In \cite[Section 5]{HS12} yet another monotonicity property for $v$ is discussed. It is shown that ``one
cannot decrease the (lim sup)-speed of a cookie RW by swapping stronger cookies in a
pile with weaker cookies that appear earlier in the same pile (and doing this at each site)".
For the precise statement we refer to 
\cite[Th.\ 5.1]{HS12}.
}\end{remark}
\begin{remark}{\rm 
{\bf
 ($d=1$,  continuity of $v$)} In \cite{BS08a} deterministic environments $\om$ of the form $\om(z,1,\cdot)=(p_1,\ldots,p_M,1/2,1/2,\ldots)$ are considered, where $M\in\N$ and $\bar p=(p_1,\ldots,p_M)\in[1/2,1)^M$ are fixed. It is shown in \cite[Th.\ 1.2]{BS08a} that the corresponding speed $v$ depends continuously on $\bar p$. It is plausible that the method of proof and result may work under the assumptions (IID), (BD) and (WEL) as well.}
\end{remark}
\subsection{Results for $d\ge 1$}
In the (IID) setting a regeneration structure, which goes back to \cite{KKS75} and has been heavily used to study RWRE, see e.g.\ \cite[Section 3.2]{Zei04}, can be straightforwardly extended to include ERW, see e.g.\ \cite[p.\ 114, Rem.\  3]{Zer05} and \cite[Section 3]{BR07}. See also Figure \ref{fig:1} for $d=1$.
\begin{figure}[h]
    \centering
    \includegraphics[height=6cm]{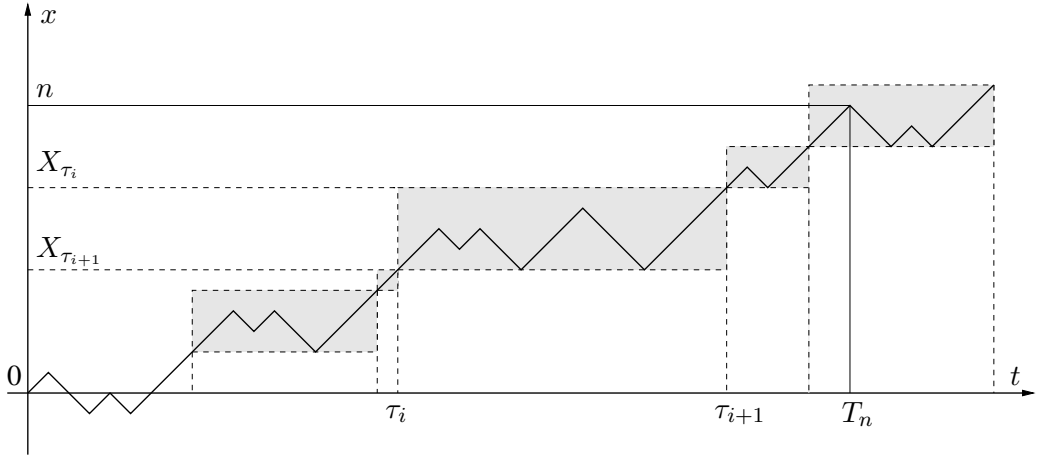}
    \caption{Regeneration structure: sizes and contents of the shaded boxes are i.i.d..}
    \label{fig:1}
  \end{figure}
\begin{lemma}\label{rs}{\bf ($d\ge 1$, regeneration structure)} Assume {\rm (WEL)} if $d=1$ and {\rm (EL)} if $d\ge 2$. Furthermore
assume {\rm (IID)}, let $\ell\in\R^d\backslash\{0\}$, and assume $P_0[A_\ell]>0$. Then there are $P_0[\ \cdot\mid A_\ell]$-a.s.\ infinitely many random times $n\ge 0$, so-called {\em regeneration times}, such that $X_m\cdot \ell<X_n\cdot \ell$ for all $m<n$ and $X_m\cdot \ell\ge X_n\cdot \ell$ for all $m\ge n$. Call the increasing enumeration of these times $(\tau_k)_{k\in\N}$. Then the random $\bigcup_{n\in\N}(\Z^d)^n$-valued vectors $(X_n)_{0\le n\le \tau_1}$, $(X_n-X_{\tau_i})_{\tau_i\le n\le \tau_{i+1}}\ (i\ge 1)$ are independent w.r.t.\ $P_0[\ \cdot\mid A_\ell]$. Moreover, the vectors $(X_n-X_{\tau_i})_{\tau_i\le n\le \tau_{i+1}}\ (i\ge 1)$ have the same distribution under $P_0[\ \cdot\mid A_\ell]$ as $(X_n)_{0\le n\le \tau_1}$ under $P_0[\ \cdot\mid \forall n\ X_n\cdot \ell\ge 0]$. Also  $E_0[(X_{\tau_2}-X_{\tau_1})\cdot\ell\mid A_\ell]<\infty$.
\end{lemma}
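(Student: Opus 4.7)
The plan is to carry out the standard regeneration construction of \cite{KKS75}, adapted to ERW as indicated in \cite[p.\ 114, Rem.\ 3]{Zer05} and \cite[Sec.\ 3]{BR07}. Writing $h(x):=x\cdot\ell$, I would first define inductively the $k$-th strict record of $h(X_\cdot)$ and the first subsequent dip below it,
\[L_1:=\inf\{n\ge 1: h(X_n)>h(X_0)\},\quad L_{k+1}:=\inf\{n>L_k: h(X_n)>\max_{0\le m<n}h(X_m)\},\]
\[R_k:=\inf\{n>L_k: h(X_n)<h(X_{L_k})\},\]
then set $K:=\inf\{k\ge 1: R_k=\infty\}$ and $\tau_1:=L_K$. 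On $A_\ell$ transience in direction $\ell$ forces $L_k<\infty$ for every $k$, so it remains to show $K<\infty$.

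The key input is a uniform lower bound $\eta>0$ on the success probability of a single attempt. At time $L_k$ the strict-record property forces the walker to have never visited any site in $H_k:=\{z: h(z)\ge h(X_{L_k})\}$; in particular $X_{L_k}$ is first visited at time $L_k$, with its cookie stack entirely unused. Because the event $\{R_k=\infty\}$ depends only on the future trajectory and on the cookies in $H_k$, assumption {\rm (IID)} makes these conditionally independent of the past up to $L_k$ and, after the translation $z\mapsto z-X_{L_k}$, distributed as the original cookies on $\{z: h(z)\ge 0\}$. Hence on $\{L_k<\infty\}$,
\[P_0[R_k=\infty\mid (X_i)_{0\le i\le L_k}]\ =\ P_0[\forall n\ge 0: h(X_n)\ge 0]\ \ge\ \eta,\]
where $\eta:=P_0[\{\forall n\ge 0: h(X_n)\ge 0\}\cap A_\ell]$ is strictly positive by Lemma \ref{D} when $d=1$ and by Lemma \ref{D2} when $d\ge 2$, both applicable since $P_0[A_\ell]>0$. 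Conditional Borel--Cantelli then forces $K<\infty$, hence $\tau_1<\infty$, $P_0[\,\cdot\mid A_\ell]$-almost surely.

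Next I would iterate this construction in the shifted walk after $\tau_1$ to obtain $\tau_2<\tau_3<\cdots$. The same IID argument identifies, conditionally on the past up to $\tau_i$, the law of $(X_{\tau_i+n}-X_{\tau_i})_{n\ge 0}$ with that of $(X_n)_{n\ge 0}$ under $P_0[\,\cdot\mid \forall n\ge 0: h(X_n)\ge 0]$. From this one reads off the independence of the initial block $(X_n)_{0\le n\le\tau_1}$ from the subsequent blocks, the i.i.d.\ structure of $(X_n-X_{\tau_i})_{\tau_i\le n\le\tau_{i+1}}$ for $i\ge 1$, and the identification of their common distribution as the law of $(X_n)_{0\le n\le\tau_1}$ under $P_0[\,\cdot\mid \forall n\ge 0: h(X_n)\ge 0]$.

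For the first-moment bound I would decompose the excursion from $\tau_1$ to $\tau_2$ by listing its strict $h$-records as $\tau_1=L'_0<L'_1<\cdots<L'_{K'+1}=\tau_2$, the first $K'$ of which are failed attempts. Between two consecutive records one has $h(X_{L'_{j+1}-1})\le h(X_{L'_j})$ (else the old record would have been broken earlier), and the final step changes $h$ by at most $C:=\max_{e\in\mathcal E}|e\cdot\ell|$, giving $0<h(X_{L'_{j+1}})-h(X_{L'_j})\le C$. Telescoping yields $(X_{\tau_2}-X_{\tau_1})\cdot\ell\le C(K'+1)$, and by the preceding argument $K'$ is stochastically dominated by a geometric$(\eta)$ variable under $P_0[\,\cdot\mid A_\ell]$, whence $E_0[(X_{\tau_2}-X_{\tau_1})\cdot\ell\mid A_\ell]<\infty$. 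The hard part will be the second paragraph: setting up the $\sigma$-algebra and cookie decomposition carefully enough that the conditional independence claim is rigorous and non-circular; once that is in place, the remaining steps are routine bookkeeping.
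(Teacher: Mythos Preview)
Your plan is exactly what the paper does: it gives no proof of Lemma~\ref{rs} but refers to the standard Sznitman--Zerner construction (\cite[Sec.\ 3.2]{Zei04}, \cite[Rem.~3]{Zer05}, \cite[Sec.~3]{BR07}) and observes that the sole place ellipticity enters---the positivity of $\eta=P_0[\{\forall n:h(X_n)\ge 0\}\cap A_\ell]$---is supplied by Lemma~\ref{D} ($d=1$) or Lemma~\ref{D2} ($d\ge 2$). You have identified precisely these ingredients.

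There is one slip in your setup that does matter. You let $L_{k+1}$ be the \emph{next strict record} after $L_k$, whereas the construction in the cited references sets the next trial time to be the first strict record \emph{after the previous failure} $R_k$. With your indexing the events $\{R_j<\infty\}$, $j<k$, are not measurable with respect to $\sigma((X_i)_{i\le L_k})$: the walk may reach the record $L_k$ first and only later dip below level $h(X_{L_j})$. Two consequences:
\begin{itemize}
\item[(a)] The display $P_0[R_k=\infty\mid (X_i)_{i\le L_k}]=P_0[\forall n:h(X_n)\ge 0]$ is correct, but you cannot iterate it to get $P_0[K>k]\le(1-\eta)^k$; ``conditional Borel--Cantelli'' in its usual form needs the failure events to be adapted. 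A L\'evy 0--1 argument still yields $K<\infty$ on $A_\ell$, but not a tail bound.
\item[(b)] Your moment bound hinges on ``$K'$ is stochastically dominated by a geometric($\eta$) variable'', which is exactly the tail bound that (a) fails to deliver.
\end{itemize}
If you switch to the Sznitman--Zerner indexing $S_{k+1}:=\inf\{n>R_k:h(X_n)>\max_{m<n}h(X_m)\}$, then $\{R_j<\infty\}$ for $j<k$ \emph{is} measurable at time $S_k$ and the geometric bound on the number of attempts is immediate---but now consecutive trial levels satisfy only $h(X_{S_{j+1}})\le C+\max_{S_j\le n\le R_j}h(X_n)$, so your neat telescoping $h(X_{L'_{j+1}})-h(X_{L'_j})\le C$ is lost. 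The moment bound in the references is obtained by combining the geometric number of S--Z attempts with the i.i.d.\ structure of the overshoots between attempts, not by the argument you sketched. So the last paragraph needs to be redone along those lines; everything else is fine and matches the paper's intended route.
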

Note that the assumption (UEL) present in \cite[Section 3.2]{Zei04} is not needed in Lemma \ref{rs}. Instead the ellipticity conditions needed are inherited from Lemma \ref{D} for $d=1$ and Lemma \ref{D2} for $d\ge 2$.
This regeneration structure can be used in the same way as for RWRE, see e.g.\ \cite[(3.2.8)]{Zei04}, to prove the following directional law of large numbers. 
\begin{theorem}\label{lln2}{\bf ($d\ge 1$, directional law of large numbers)}\\
Under the assumptions of Lemma~\ref{rs} the following holds: $P_0$-a.s.\ on the event $A_\ell$,
\begin{equation}\label{rudi}
\lim_{n\to\infty}\frac{X_n\cdot \ell}{n}=v_\ell:=\frac{E_0[(X_{\tau_2}-X_{\tau_1})\cdot\ell\mid A_\ell]}{E_0[\tau_2-\tau_1\mid A_\ell]}\in[0,1].
\end{equation}
\end{theorem}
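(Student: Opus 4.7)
The plan is to apply a standard renewal-type argument under the conditional measure $P:=P_0[\,\cdot\mid A_\ell]$, using the i.i.d.\ regeneration structure provided by Lemma \ref{rs}. Set $Y_i:=(X_{\tau_{i+1}}-X_{\tau_i})\cdot\ell$ and $\sigma_i:=\tau_{i+1}-\tau_i$ for $i\geq 1$. By Lemma \ref{rs} each of $(Y_i)_{i\geq 1}$ and $(\sigma_i)_{i\geq 1}$ is an i.i.d.\ sequence of nonnegative random variables under $P$, since each term is a measurable function of the $i$-th regeneration slab. Moreover $\mu_Y:=E_P[Y_1]<\infty$ by the final clause of Lemma \ref{rs}, while $\mu_\sigma:=E_P[\sigma_1]\in(0,\infty]$ is a priori possibly infinite. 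The strong law of large numbers for nonnegative i.i.d.\ summands (valid also with infinite mean) then yields $P$-a.s.
\[
\frac{X_{\tau_k}\cdot\ell}{k}=\frac{X_{\tau_1}\cdot\ell+\sum_{i=1}^{k-1}Y_i}{k}\longrightarrow\mu_Y\quad\text{and}\quad \frac{\tau_k}{k}\longrightarrow\mu_\sigma.
\]

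Next, let $k(n):=\sup\{k\geq 1:\tau_k\leq n\}$, which is $P$-a.s.\ finite and tends to $\infty$ as $n\to\infty$ since $\tau_k\nearrow\infty$. The defining property of regeneration times (strict ascent of $X_m\cdot\ell$ at time $\tau_k$ relative to the past, and $X_m\cdot\ell\geq X_{\tau_k}\cdot\ell$ thereafter) gives, for $\tau_{k(n)}\leq n<\tau_{k(n)+1}$, the sandwich
\[
\frac{X_{\tau_{k(n)}}\cdot\ell}{\tau_{k(n)+1}}\leq \frac{X_n\cdot\ell}{n}<\frac{X_{\tau_{k(n)+1}}\cdot\ell}{\tau_{k(n)}}.
\]
When $\mu_\sigma<\infty$, the elementary renewal theorem yields $k(n)/n\to 1/\mu_\sigma$ $P$-a.s., and both sides of the sandwich converge to $\mu_Y/\mu_\sigma=v_\ell$. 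When $\mu_\sigma=\infty$, we have $n/k(n)\geq \tau_{k(n)}/k(n)\to\infty$, so $k(n)/n\to 0$; the upper bound factors as $\bigl(X_{\tau_{k(n)+1}}\cdot\ell/(k(n)+1)\bigr)\cdot\bigl((k(n)+1)/n\bigr)\to\mu_Y\cdot 0=0$, matching the natural reading $v_\ell=\mu_Y/\infty=0$, and the lower bound is squeezed to $0$ analogously.

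In either case one obtains $P$-a.s.\ $X_n\cdot\ell/n\to v_\ell$, which is the claim on $A_\ell$. Nonnegativity $v_\ell\geq 0$ is immediate from $Y_1\geq 0$, and the upper bound $v_\ell\leq 1$ comes from the trivial nearest-neighbor step-size estimate. The main obstacle is the potentially infinite-mean case $\mu_\sigma=\infty$; the sandwich argument circumvents the need to invert $\mu_\sigma$ directly by indexing through $k(n)$ and exploiting nonnegativity of the $Y_i$'s. A smaller technical point is to confirm that $Y_i$ and $\sigma_i$ are genuinely measurable functions of the $i$-th regeneration slab, which is built into the statement of Lemma \ref{rs} that the slabs are i.i.d.\ random $\bigcup_{n}(\Z^d)^n$-valued vectors.
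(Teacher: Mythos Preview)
Your proof is correct and follows exactly the standard renewal argument the paper alludes to (it gives no detailed proof, only the reference to \cite[(3.2.8)]{Zei04}); the sandwich between regeneration times together with the SLLN for the i.i.d.\ slabs is precisely that argument. One cosmetic slip: in the case $\mu_\sigma=\infty$ your factorization $\bigl(X_{\tau_{k(n)+1}}\cdot\ell/(k(n)+1)\bigr)\cdot\bigl((k(n)+1)/n\bigr)$ equals $X_{\tau_{k(n)+1}}\cdot\ell/n$, not the displayed sandwich bound $X_{\tau_{k(n)+1}}\cdot\ell/\tau_{k(n)}$---but either expression works (the former bounds $X_n\cdot\ell/n$ directly, the latter factors as $(X_{\tau_{k(n)+1}}\cdot\ell/(k(n)+1))\cdot((k(n)+1)/\tau_{k(n)})\to\mu_Y\cdot 0$), so the conclusion stands.
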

The challenge is to determine whether $v_\ell$ is strictly positive or not, i.e.\ whether $E_0[\tau_2-\tau_1\mid A_\ell]$ is finite or not, see Section \ref{sbal}.
\begin{problem}\label{op}{\bf ($d\ge 1$)} {\rm 
Assume (IID), (EL), $\ell\in\R^d\backslash\{0\}$.
Does $X_n\cdot \ell/n$ converge $P_0$-a.s.\ on $\{|X_n|\to\infty\}^c$ to 0 as $n\to\infty$? This would generalize \cite[Th.\ 1]{Zer02}.}
\end{problem}
\begin{theorem}\label{LLN}{\bf ($d\ge 1$, law of large numbers)}\\
Assume {\rm (WEL)} if $d=1$ and {\rm (EL)} if $d\ge 2$.
Furthermore
assume {\rm (IID)}
and let $\ell_1,\ldots,\ell_d$ be a basis of $\R^d$ such that 
$P_0[A_{\ell_i}\cup A_{-\ell_i}]=1$  
for all $i=1,\ldots,d$.
Then there are  $v\in\R^d$ and $c\ge 0$ such that $P_0$-a.s.
\[\lim_{n\to\infty}\frac{X_n}{n}\in\{v,-cv\}.\]
If, in addition,
$P_0[A_{\ell_i}]=1$  for all $i=1,\ldots,d$ then $X$ satisfies a strong law of large numbers with velocity $v\in\R^d$ such that $v\cdot \ell_i\ge 0$ for all $i=1,\ldots,d$.
\end{theorem}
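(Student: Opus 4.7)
The overall plan is to secure $P_0$-a.s.\ convergence of $X_n/n$ by first treating each basis coordinate, and then to constrain the support of the limit via Theorem \ref{lln2} applied to carefully chosen non-basis directions. Under the hypothesis $P_0[A_{\ell_i}\cup A_{-\ell_i}]=1$, Theorem \ref{lln2} and its reflected version give that $X_n\cdot\ell_i/n$ converges $P_0$-a.s., taking the deterministic value $v_{\ell_i}\in[0,1]$ on $A_{\ell_i}$ and $-v_{-\ell_i}\in[-1,0]$ on $A_{-\ell_i}$. Because $\ell_1,\ldots,\ell_d$ is a basis, this forces the existence of a random vector $V:=\lim_n X_n/n$ whose support $S$ is finite (with at most $2^d$ elements). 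If in addition $P_0[A_{\ell_i}]=1$ for every $i$, then $V\cdot\ell_i=v_{\ell_i}\ge 0$ deterministically for each $i$, so $V$ equals a single deterministic vector $v$ with $v\cdot\ell_i\ge 0$, which immediately gives the SLLN part of the statement.

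For the general conclusion $S\subseteq\{v,-cv\}$, the key claim is that any two distinct nonzero $v_1,v_2\in S$ must satisfy $v_2=-cv_1$ for some $c>0$. To prove this, suppose the two vectors are not so related. Then the open convex cone $C:=\{\ell\in\R^d:\ell\cdot v_1>0\text{ and }\ell\cdot v_2>0\}$ is nonempty and is not contained in the hyperplane $\{\ell:\ell\cdot v_1=\ell\cdot v_2\}$, so there is some $\ell\in C$ with $\ell\cdot v_1\ne\ell\cdot v_2$. For this $\ell$, the event $\{V=v_1\}\cup\{V=v_2\}\subseteq A_\ell$ has positive probability, so Lemma \ref{rs} and Theorem \ref{lln2} apply and produce a single deterministic limit $v_\ell$ for $X_n\cdot\ell/n$ on $A_\ell$; this contradicts the fact that the limit must equal $\ell\cdot v_1$ on $\{V=v_1\}$ and $\ell\cdot v_2$ on $\{V=v_2\}$. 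Iterating this resulting anti-parallelism relation prevents $S\setminus\{0\}$ from containing three or more elements, and ensures that if it contains exactly two, they are of the form $v$ and $-cv$ with $c>0$.

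What remains is to exclude the coexistence of $0\in S$ with a nonzero element $v\in S$. The plan is to apply Theorem \ref{lln2} along direction $v$: on $\{V=0\}\cap A_v$ the limit $X_n\cdot v/n$ equals $0$, whereas on $\{V=v\}\subseteq A_v$ it equals $|v|^2\ne 0$; since Theorem \ref{lln2} demands this limit be a constant $v_v$ on $A_v$, one is forced to conclude $P_0[\{V=0\}\cap A_v]=0$, and symmetrically $P_0[\{V=0\}\cap A_{-v}]=0$. For $d\ge 2$ the hypotheses (IID) and (EL) let us invoke the Kalikow-type zero-one law (Theorem \ref{kal2}) to deduce $P_0[A_v\cup A_{-v}]=1$ from $P_0[A_v]\ge P_0[V=v]>0$, whence $P_0[V=0]=0$, contradicting $0\in S$. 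For $d=1$ there is only one basis direction and $V$ is scalar, so the conclusion $S\subseteq\{v,-cv\}$ is immediate from the first paragraph. The main obstacle is arranging precisely this final step: Kalikow's zero-one law is what rules out a positive-probability sublinear regime coexisting with a ballistic one, an issue that cannot be handled by the regeneration structure of Lemma \ref{rs} alone.
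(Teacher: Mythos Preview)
Your approach is essentially the one the paper intends: the paper's proof simply invokes \cite[Th.\ 1.5]{DR10}, and what you have written is precisely the Drewitz--Ram\'irez argument adapted to the ERW setting, using Theorem~\ref{lln2} for each direction and Theorem~\ref{kal2} to close the argument.

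One small correction is needed in your Step~4. You state that ``what remains is to exclude the coexistence of $0\in S$ with a nonzero element $v\in S$,'' but this is too strong: the configuration $S=\{0,v\}$ is perfectly compatible with the conclusion by taking $c=0$, so that $\{v,-cv\}=\{v,0\}$. What actually has to be ruled out after Step~3 is the three-point configuration $S=\{0,v,-cv\}$ with $v\ne 0$ and $c>0$. In that case your argument is valid: $\{V=v\}\subseteq A_v$ pins down $v_v=|v|^2\ne 0$, giving $P_0[\{V=0\}\cap A_v]=0$, and the ``symmetric'' half genuinely works because $\{V=-cv\}\subseteq A_{-v}$ pins down $v_{-v}=c|v|^2\ne 0$, giving $P_0[\{V=0\}\cap A_{-v}]=0$; then Kalikow's zero-one law finishes it. By contrast, if you literally tried to exclude $S=\{0,v\}$, the ``symmetric'' step would fail: nothing forces $v_{-v}\ne 0$ when $-cv\notin S$, so you cannot conclude $P_0[\{V=0\}\cap A_{-v}]=0$. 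Fortunately you do not need to.
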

\begin{proof}
The proof of the first statement is analogous to the proof of \cite[Th.\ 1.5]{DR10} and uses Theorem \ref{lln2}. The additional assumption 
$P_0[A_{\ell_i}\cup A_{-\ell_i}]=1$, which is not needed in \cite[Th.\ 1.5]{DR10}, is due to Problem \ref{op}.
The second statement follows directly from Theorem \ref{lln2}.
\end{proof}
\begin{remark}\label{MP}{\rm
Note that by Theorem \ref{rt2} the assumption $P_0[A_{\ell_i}]=1$ in Theorem \ref{LLN} is satisfied if (MPRV$_\ell$) holds for some $\ell\in\R^d\backslash\{0\}$ and $\ell_i$ is chosen from a sufficiently small neighborhood of $\ell$.
}\end{remark}
\begin{remark}\label{alp}{\bf ($d\ge 6$, LLN via cut points)} {\rm
    Using cut points,  a law of large numbers for RWRE has been  proved in 
\cite[Th.\ 1.4]{BSZ03} in the case where at least 5 coordinates of the walk jointly form a standard RW. This method was generalized in \cite[Th.\ 1.1]{HS}, see also \cite[Th.\ 2.1]{H}, to prove a law of large numbers for similar high-dimensional ERWs in the (IID) setting.}
\end{remark} 
\begin{remark}\label{exp}{\rm 
($d$ large, {\bf monotonicity, continuity and differentiability of $v\cdot e_1$}) For the original model (BW), it has been shown in \cite[Th.\ 1.1]{VdHH10} by lace expansion techniques,  that the first coordinate $v\cdot e_1$ is monotonically increasing in the drift parameter $p$ if $d\ge 9$.
The same approach also allows to prove a weak law of large numbers for $d\ge 6$, see \cite[Th.\ 2.3]{vdHH12}. 
\cite{H} assumes  (IID) and $\PP$-a.s.\ $\om(0,e_j,i)=1/(2d)$ for all $j=2,\ldots,d$. Using the same expansion technique, it is  shown in \cite[Th.\ 2.3]{H} that the velocity is in an appropriate sense continuous in the drift parameters $\EE[\om(0,e_1,i)]$ if $d\ge 6$ and even differentiable if $d\ge 8$. Strict monotonicity for $d\ge 12$ is also considered. For similar statements regarding monotonicity and continuity in the context discussed in Remark \ref{alp} see  
\cite[Th.\ 1.2]{HS}.
}
\end{remark}
\section{Ballisticity}\label{sbal}
An ERW is called \textit{ballistic} if it satisfies a strong law of large numbers with non-zero velocity $v$.  
\subsection{Results for $d=1$}
For recurrence and transience, the number 1, as a value for $|\delta|$
or $M(0)$, played a crucial role, see Theorem
\ref{rt} and Proposition \ref{m1}. Similarly, as we are about to see, the number 2 plays a special
role for ballisticity.
\begin{prop}{\bf ($d=1$, no speed with two cookies)}\label{M2}
Assume {\rm (SE)} and  $M(0)\le 2$ $\PP$-a.s.\ and  the ellipticity condition $\PP[\om(0,1,1)<1, \om(1,1,1)<1], \PP[\om(0,-1,1)<1, \om(1,-1,1)<1]>0$. If the ERW satisfies a strong law of large numbers with speed $v$ then $v=0$.
\end{prop}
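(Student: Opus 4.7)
By the symmetry noted in Remark~\ref{d0}, it suffices to rule out $v>0$. Assume, for contradiction, that $v>0$. Then $X_n\to\infty$ $P_0$-a.s., the hitting times $T_k$ are $P_0$-a.s.\ finite, and $T_k/k\to 1/v$.

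The approach is the branching-process-with-migration analysis that underlies the proof of Theorem~\ref{rt} in case~(\ref{r2}); cf.\ \cite{KZ08,BS08a}. Set
\[
U^k_m:=\#\{n<T_k:\ X_n=m,\ X_{n+1}=m-1\}.
\]
Counting net displacement gives $T_k=k+2\sum_{m\in\Z}U^k_m$, so ballisticity forces $\sum_m E_0[U^k_m]=O(k)$. For $0\le m<k$ the number of visits to level $m$ before $T_k$ is $V_k(m)=U^k_m+U^k_{m+1}+1$, and conditional on $V_k(m)$ the walker's decisions at these visits are independent Bernoulli trials with parameters $\om(m,-1,i)$ for $i=1,\dots,V_k(m)$, equal to $1/2$ for $i>M(m)$. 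Viewed backward from $m=k-1$, the sequence $(U^k_m)$ is therefore a branching-process-with-migration whose offspring distribution in the placebo regime is critical $\mathrm{Geom}(1/2)$ (mean one), and whose non-placebo perturbation at each level is controlled by the at most two non-placebo cookies stored there.

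Standard branching-process-with-migration theory then yields the sharp criterion: $\sum_m E_0[U^k_m]=O(k)$ iff the averaged total drift
\[
\delta:=\EE\!\left[\sum_{i\ge 1}\bigl(2\om(0,1,i)-1\bigr)\right]
\]
exceeds $2$ (as in \cite[Thm.~1]{KZ08}). Under the hypotheses of the proposition, $M(0)\le 2$ forces $\delta\le 2$, and the ellipticity assumption $\PP[\om(0,1,1)<1,\,\om(1,1,1)<1]>0$ implies $\EE[\om(0,1,1)]<1$, hence $\delta<2$ strictly. Therefore $\sum_m E_0[U^k_m]/k\to\infty$, so $T_k/k\to\infty$, contradicting $v>0$.

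\textbf{Main obstacle.} The branching-process-with-migration analysis of \cite{KZ08} was carried out under~(IID); adapting it to the (SE) hypothesis of the present proposition is the technical heart of the argument, since cookies at distinct sites are no longer independent. The critical threshold $\delta=2$ depends only on the one-point marginal of $\PP$---which is the same under~(IID) and~(SE)---so the threshold itself is unchanged; what must be done is to justify the backward recursion governing $E_0[U^k_m]$ in a stationary ergodic environment, replacing i.i.d.\ concentration inequalities by ergodic-theorem invocations and controlling the cumulative effect of weakly-dependent migrations across levels.
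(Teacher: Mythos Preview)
Your proposal has a genuine gap. You correctly identify the main obstacle --- carrying the branching-process-with-migration analysis of \cite{KZ08} from (IID) to (SE) --- but you do not actually overcome it. The backward process $(U^k_m)_m$ is a Markov chain \emph{only because} the cookie stacks at distinct sites are independent under (IID); under (SE) the environments at different levels are correlated, and the recursion for $U^k_m$ given $U^k_{m+1}$ no longer decouples from the past. The tail/moment estimates in \cite{KZ08,BS08a,KM11} that pin down the threshold $\delta=2$ all use this Markov structure. Your remark that ``the threshold depends only on the one-point marginal of $\PP$'' is an intuition, not an argument: the relevant quantities are growth rates of $\sum_m U^k_m$, which involve the joint law of the environment across many sites. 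Replacing i.i.d.\ concentration by unspecified ``ergodic-theorem invocations'' is not a proof; indeed, whether Theorem~\ref{bal} extends from (IID) to (SE) is stated in the paper as an open problem, and the example in Remark~5.6 shows that $\delta$ alone does not characterize ballisticity under (SE).

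The paper's proof is entirely different and avoids the branching-process machinery. It proceeds by monotonicity and comparison: replace every cookie with negative drift by a placebo cookie to obtain a measure $\PP_2$ that is stronger than $\PP$ in the sense of Proposition~\ref{mono}, still satisfies (SE), $M(0)\le 2$, and now also (POS$_1$). For such $\PP_2$ the speed is known to be $0$ by \cite[Th.~19]{Zer05} (this is where the stated ellipticity condition enters). Proposition~\ref{mono} then gives $v\le v_2=0$. The symmetric modification (replacing positive-drift cookies by placebos, or equivalently applying Remark~\ref{d0}) gives $v\ge 0$, hence $v=0$. This argument works directly under (SE) because both the monotonicity proposition and \cite[Th.~19]{Zer05} do; no (IID) structure is needed.
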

\begin{proof} 
Under the additional assumption (POS$_1$) the strong law of large numbers with speed $0$ has been proven in \cite[Th.\ 19]{Zer05}. Replacing cookies with negative drift by placebo cookies and using Proposition \ref{mono} therefore yields $v\le 0$. By symmetry also $v\ge 0$.
\end{proof}
\begin{theorem}\label{bal}{\bf ($d=1$, (non-)ballisticity \cite[Th.\ 1.1, 1.3]{MPV06}, \cite[Th.\ 1.1]{BS08a}, \cite[Th.\ 2]{KZ08})}
Assume {\rm (IID), (BD)}, and {\rm (WEL)}.  Then $v<0$ if $\delta<-2$, $v=0$ if $\delta\in[-2,2]$ and $v>0$ if $\delta>2$.
\end{theorem}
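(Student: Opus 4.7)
The argument proceeds via the classical encoding of a one-dimensional ERW by a branching process with migration (BPM). By Remark \ref{d0} we may assume $\delta\ge 0$. If $\delta\in[0,1]$, Theorem \ref{rt} shows that the walk is recurrent, so $P_0[|X_n|\to\infty]=0$, and together with the LLN (which in this regime holds in the form $X_n/n\to 0$ under {\rm (IID)}, {\rm (BD)}, {\rm (WEL)} via the i.i.d.\ block decomposition obtained from the regeneration structure of Lemma \ref{rs}) this forces $v=0$. For $\delta>1$, Theorem \ref{rt} gives transience to the right, so by Lemma \ref{rs} and Theorem \ref{lln2},
\[
v \;=\; \frac{E_0[(X_{\tau_2}-X_{\tau_1})\cdot e_1\mid A_{e_1}]}{E_0[\tau_2-\tau_1\mid A_{e_1}]},
\]
and the task reduces to deciding whether the denominator is finite ($v>0$) or infinite ($v=0$).

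The central step is the BPM encoding. Let $L_n:=\#\{m<T_k: X_m=n+1,\ X_{m+1}=n\}$ denote the number of down-crossings of the edge $(n,n+1)$ before $T_k$, so that $T_k=k+2\sum_{n=0}^{k-1}L_n$. A site-by-site inspection of the walker's departures from each site shows that the reversed sequence $V_j:=L_{k-1-j}$, $0\le j<k$, is a Markov chain that realizes a critical BPM: each ``particle'' at generation $j$ reproduces according to a $\geo$ offspring distribution (mean $1$, variance $\sigma^2=2$) arising from the free-walk excursions between successive upward steps after the $M$ cookies at the current site have been used up, and an environment-dependent additive migration $\xi_{k-1-j}$ is added at each step, reflecting the net contribution of the $M$ cookies at that site. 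Assumption {\rm (IID)} makes $(\xi_n)_{n\in\Z}$ an i.i.d.\ sequence, and a direct computation gives
\[
\EE[\xi_0] \;=\; \EE\bigl[D^{(0)}-U^{(0)}\bigr]+1 \;=\; 1-\delta,
\]
where $U^{(0)}$ and $D^{(0)}=M-U^{(0)}$ are the numbers of upward, respectively downward, departures produced during the consumption of the $M$ cookies at site $0$, so that $\EE[U^{(0)}-D^{(0)}]=\delta$ by the very definition of $\delta$.

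The dichotomy then follows from the classical theory of critical BPMs: the chain $(V_j)$ is positive recurrent with a stationary law of finite mean iff $\EE[\xi_0]<-\sigma^2/2=-1$, i.e.\ iff $\delta>2$; for $\delta\in[1,2]$ the chain is not positive recurrent and $V_j$ fails to be tight. Consequently, in the positive-recurrent case the ergodic theorem yields $k^{-1}\sum_{j<k}V_j\to \bar V\in(0,\infty)$ $P_0$-a.s., so $T_k/k\to 1+2\bar V<\infty$ and $v=(1+2\bar V)^{-1}>0$. When $\delta\in(1,2]$ one instead obtains $k^{-1}\sum_{j<k}V_j\to\infty$ $P_0$-a.s., so $T_k/k\to\infty$ and $v=0$. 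The remaining assertion $v<0$ for $\delta<-2$ follows from the case $\delta>2$ by the $\om\mapsto 1-\om$ symmetry of Remark \ref{d0}.

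The main obstacle is the sharp analysis at the critical value $\delta=2$, where the BPM sits exactly at the boundary of positive recurrence; soft monotonicity arguments do not suffice, and delicate tail estimates on the successive maxima of critical BPMs (the bulk of the technical work in \cite{MPV06,BS08a,KZ08}) are required to show that $(V_j)$ is not positive recurrent in this case. A secondary technicality is that the BPM recursion describes the dynamics between two regeneration times rather than the whole trajectory, so the random initial state $V_0$ and the behavior of the walk before the first regeneration must be shown to contribute negligibly to the Ces\`aro average; this is handled by a tightness/truncation argument that exploits the i.i.d.\ regeneration structure.
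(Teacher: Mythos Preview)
Your proposal follows the same core strategy as the paper: encode the down-crossings before $T_k$ by a branching process $V$ with $\mathrm{Geom}(1/2)$ offspring and migration of mean $1-\delta$, and then read off ballisticity from the behavior of $V$. The encoding and the computation $\EE[\xi_0]=1-\delta$ match the paper exactly.

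Where you diverge is in the \emph{conclusion} from the encoding. The paper does not appeal to a general positive-recurrence criterion for critical BPMs; instead it establishes the precise tail asymptotics $P_0[\sigma^V>n]\sim C_1 n^{-\delta}$ and $P_0[A^V>n]\sim C_2 n^{-\delta/2}$ via a diffusion approximation by a squared Bessel process of dimension $2(1-\delta)$, and then transfers these through (\ref{corr}) to the regeneration increments, obtaining $P_0[\tau_2-\tau_1>n]\sim C_3 n^{-\delta/2}$. Finiteness of $E_0[\tau_2-\tau_1]$ then follows immediately from the exponent, and Theorem~\ref{lln2} gives the sign of $v$. Your route---positive recurrence of $V$ plus the ergodic theorem for $k^{-1}\sum_{j<k}V_j$---is in principle more direct for this theorem alone, but the ``classical theory of critical BPMs'' you invoke is not actually off-the-shelf in the form you need (the migration here is state-dependent when $V_k<M$), and the paper's references \cite{BS08a,KZ08,KM11} had to develop or adapt it. The paper's tail-asymptotic route is heavier but pays off later: those same estimates drive all the limit laws in Section~\ref{flt}.

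One genuine soft spot in your argument: for $\delta\in(1,2]$ you assert $k^{-1}\sum_{j<k}V_j\to\infty$ from ``$V_j$ fails to be tight''. Null recurrence (or non-positive-recurrence) of a Markov chain does not by itself force divergence of Ces\`aro averages; you need a quantitative lower bound on the excursions of $V$, which is precisely where the hard work at $\delta=2$ that you flag in your last paragraph enters. The paper circumvents this by getting the tail of $A^V$ directly.
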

\begin{problem}{\rm Can one replace in Theorem \ref{bal} the assumption (BD) by (POS$_1$)?}
\end{problem}
\begin{proof}[Idea of the proof  of Theorem \ref{bal}] The proof  relies on a connection with branching
processes, which we now describe. The main ideas go back to
\cite[Sec.\ 6]{Ha52},  \cite{Kn63}, 
\cite{KKS75}, and \cite{T96}. They were introduced into the study of ERW in \cite{BS08a}. We follow the exposition given in \cite{KM11}.

By Remark \ref{d0} we may assume without loss of generality that
$\delta\ge 0$. It will be convenient to use the following coin-tossing
construction of $X$ as characterized in (\ref{top}).  Let
$(\Om',\mathcal{F'})$ be some measurable space equipped with a family
of probability measures $P_{0,\omega},\ \omega\in\Omega$, and $\pm
1$-valued random variables $B_{i}^{(k)},\ k\in\Z,\ i\in\N,$ which are
for all $\omega\in\Omega$ independent under $P_{0,\omega}$ with
distribution given by
$P_{0,\omega}[B_{i}^{(k)}=\pm 1]=\omega(k,\pm 1,i).$
The events $\{B_{i}^{(k)}=1\}$ (resp.\ $\{B_{i}^{(k)}=-1\}$), $i\in\N$,
$k\in\Z$, will be called ``successes'' (resp.\ ``failures''). 
Then an ERW $X$ starting at 0 in
the environment $\omega$ can be defined on the
probability space $(\Om',\mathcal{F'},P_{0,\omega})$ by 
\[
X_0:=0\quad\mbox{and}\quad X_{n+1}:=X_n+B_{\#\{0\le r\le n\mid X_r=X_n\}}^{(X_n)}\quad \mbox{for}\quad n\ge 0.
\]
By Theorem~\ref{rt}, $P_0[T_n<\infty]=1$ for
all $n\in\N$. 
For $n\in\N$ and $k\in\Z$ let 
\[J_{n,k}^\downarrow:=\sum_{j=0}^{T_n-1}\won_{\{X_j=k,\ X_{j+1}=k-1\}}\quad\mbox{and}\quad
J_{n,k}^\uparrow:=\sum_{j=0}^{T_n-1}\won_{\{X_j=k,\ X_{j+1}=k+1\}}
\]
 be the number of jumps from
$k$ to $k-1$ resp.\ from $k$ to $k+1$ before time $T_n$.
Observe that for all $n\in\N$ and $k\in\Z$,
\begin{equation}\label{st}
J_{n,k}^\uparrow=J_{n,k+1}^\downarrow+\won_{0\le k<n}.
\end{equation}
Therefore,
\[T_n=\sum_{k\in\Z}J_{n,k}^\uparrow+J_{n,k}^\downarrow\stackrel{(\ref{st})}{=}n+2\sum_{k\le n}J_{n,k}^\downarrow=n+2\sum_{0\le k\le n}J_{n,k}^\downarrow+2\sum_{k<0}J_{n,k}^\downarrow.\]
The last sum is bounded above by the total time spent by $X$ below $0$.
When $\delta>1$, i.e.\ $X$ is transient to the right, the
    time spent below $0$ is $P_0$-a.s.\ finite, and the growth of
    $T_n$ is determined by $J^\downarrow_{n,k}$ for $0\le k\le n$.
Thus, it is enough to consider the ``backward'' process
$\left(J_{n,n}^\downarrow,J_{n,n-1}^\downarrow,\ldots,J_{n,0}^\downarrow\right)$. Obviously, $J_{n,n}^\downarrow=0$
for every $n\in\N$. 

Moreover,
denote by $F^{(k)}_m$ the number of
``failures'' in the sequence $B^{(k)}$ before the $m$-th
``success''.  Since the last departure from $k$ before time $T_n$ leads to $k+1$ we have
\[J^\downarrow_{n,k}=F^{(k)}_{J^\uparrow_{n,k}}\stackrel{(\ref{st})}{=}F^{(k)}_{J^\downarrow_{n,k+1}+1}\] for all $0\le k<n$. Since the sequence $((F_m^{(k)})_{m\in\N})_{k\in\Z}$ is i.i.d.\ under $P_0$ 
we conclude that the
distribution of $\left(J_{n,n}^\downarrow,J_{n,n-1}^\downarrow,\ldots,J_{n,0}^\downarrow\right)$ under $P_0$ 
coincides with that of $(V_0,V_1,\dots,V_n)$, where $V=(V_k)_{k\ge
  0}$ is the Markov chain defined by
\[
V_0:=0,\quad V_{k+1}:=F^{(k)}_{V_k+1}\quad \mbox{for $k\ge 0$}.
\]
Thus it suffices to study $V$. Observe that $V$ is a
branching process with the following properties:
(i) $V$ has exactly $1$ immigrant in each generation. The 
immigration occurs before the reproduction.
(ii) One can enumerate the individuals in the $k$-th generation in such a way that 
the number $\zeta_m^{(k)}$ of offspring of the $m$-th individual in
  generation $k$ is given by the number of failures
  between the $(m-1)$-th and $m$-th success in the sequence
  $B^{(k)}$. (Here the time of the 0-th success is 0.) In particular, if $V_{k}\ge M$ then the offspring
  distribution of each individual after the $M$-th one is
 $\mathrm{Geom(1/2)}$ (i.e.\ geometric on $\{0\}\cup\N$ with parameter
$1/2$). 

Therefore,
we can write for $k\ge 0,$
\begin{equation}
 \label{defV}
V_{k+1}=\sum_{m=1}^{M\wedge (V_k+1)}
\zeta^{(k)}_m+\sum_{m=1}^{V_k-M+1}\xi^{(k)}_m,
\end{equation} 
where $\xi^{(k)}_m,\,k\ge 0,\,m\ge 1,$ are i.i.d.\
$\mathrm{Geom(1/2)}$ random variables and the vectors $\left(\zeta^{(k)}_1,
  \zeta^{(k)}_2, \ldots, \zeta^{(k)}_M\right)$, $k\ge 0$, are i.i.d.\
under $P_0$ and independent of $(\xi^{(k)}_m)_{k\ge 0,m\ge 1}$.
For each $k\ge 0$ the random variables $\zeta^{(k)}_m, m=1,\ldots,M$, are in general
neither independent nor identically distributed.  Define the length $\sigma^V$ of
the life cycle and the total progeny $A^V$ over a single life
cycle by
\begin{equation*}
\sigma^V:=\inf\{j\ge 1\,|\, V_j=0\}
\quad\mbox{and}\quad A^V:=\sum_{j=0}^{\sigma^V-1}V_j.     
\end{equation*} 
These two quantities partially characterize the regeneration
structure of transient ERW (see Lemma~\ref{rs} with $d=1$ and
\cite[Lem.\  12, (30)]{KZ08}), namely,
\begin{equation}
  \label{corr}
  \sigma^V\overset{\mathrm{d}}{=}X_{\tau_2}-X_{\tau_1},\quad
  A^V\overset{\mathrm{d}}{=}\frac{\tau_2-\tau_1-(X_{\tau_2}-X_{\tau_1})}{2}.
\end{equation}
Detailed information about the tail behavior of $\sigma^V$ and
$A^V$ is the key to the proof of Theorem~\ref{bal} as well as to
the results about scaling limits of $T_n$ and $X_n$ (see
Section~\ref{flt}).
  There is a large number of papers which study the extinction
  probabilities of branching processes with migration. Unfortunately,
  they do not seem to include this particular setting. In \cite{KZ08}
   existing results were adopted by introducing
  appropriate modifications of the process $V$. In \cite{BS08a} most and in \cite{KM11} all of the
  required results about branching processes were obtained
  directly. 
\begin{theorem}{\bf (\cite[Th.\ 2.1, 2.2]{KM11})}\label{X}
 Assume {\rm (IID), (BD),} and {\rm (WEL)} and let $\delta>0$. Then 
there are constants $C_1,C_2\in(0,\infty)$ such that 
\[\lim _{n \rightarrow \infty} n^\delta P_0(\sigma^V>n) =
C_1\ \ \text{and}\ \ \lim_{n\to\infty}n^{\delta/2}P_0\left(A^V>n\right)=C_2.\]
\end{theorem}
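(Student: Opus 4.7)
The plan is to analyze the Markov chain $V$ from (\ref{defV}) as a critical Galton--Watson process with bounded migration, and then to exploit its squared Bessel scaling limit. First I would verify that, on $\{V_k=v\ge M\}$, the conditional mean of the increment $V_{k+1}-V_k$ equals the deterministic constant
\[c\ :=\ \EE\Bigg[\sum_{m=1}^M\zeta_m^{(k)}\Bigg]-M+1,\]
and the conditional variance equals $2(v-M+1)+O(1)$. Writing $N_M$ for the time of the $M$-th success in the coin sequence $B_1^{(k)},B_2^{(k)},\ldots$ gives the identity $\sum_{i=1}^{N_M}B_i^{(k)}=M-\sum_{m=1}^M\zeta_m^{(k)}$; applying Wald's identity to the quenched martingale $\sum_{i\le n}(B_i^{(k)}-E_{0,\om}[B_i^{(k)}])$ and averaging over $\om$ then yields $\EE[\sum_{i=1}^{N_M}B_i^{(k)}]=\delta$ and hence $c=1-\delta$. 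Thus $V$ has a constant drift linear in $\delta$ and a conditional variance growing like $2v$, which are the signatures of a Feller-type branching diffusion.

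Next I would run the parabolic rescaling $\widetilde V^{(n)}_t:=V_{[nt]}/n$ and, via a standard martingale-problem argument, prove convergence to the diffusion $Y$ satisfying $dY_t=(1-\delta)\,dt+\sqrt{2 Y_t}\,dB_t$. The linear change $Z:=2Y$ turns this into a squared Bessel process of dimension $d:=2(1-\delta)$ (negative when $\delta>1$, handled as usual via the canonical time change). Since the corresponding index $\nu:=(d-2)/2=-\delta$ is strictly negative for every $\delta>0$, $Z$ hits $0$ in finite time almost surely, and its excursions away from $0$ have the classical tail $P_1(\tau_0>t)\sim c\,t^{-\delta}$. The Brownian scaling $Z_t\stackrel{\rm d}{=}xZ_{t/x}$ then forces the area of a typical excursion of $Z$ to scale as (length)$^2$, converting the exponent $\delta$ for $\sigma^V$ into the exponent $\delta/2$ for $A^V$.

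The main obstacle is upgrading this weak-convergence picture to \emph{sharp} tail asymptotics with explicit positive constants $C_1,C_2$, rather than merely the correct exponents. One natural route is to use the strong Markov property of $V$ at successive returns to level $1$ to derive a renewal-type equation for $P_0(\sigma^V>n)$, combine it with uniform estimates on the generating function of one-excursion statistics, and then apply a Tauberian theorem. This is essentially the strategy of \cite{KM11}; the technical difficulty is that the migration variables $\zeta_1^{(k)},\ldots,\zeta_M^{(k)}$ are neither i.i.d.\ nor independent of the bulk $\mathrm{Geom}(1/2)$ offspring, so off-the-shelf branching-with-migration results do not apply directly and must be rebuilt by hand, with (WEL) ensuring enough regularity of the individual $\zeta_m^{(k)}$.
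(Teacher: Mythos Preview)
Your proposal is correct and matches the paper's own ``Idea of the proof'' essentially point for point: identify the drift $1-\delta$ and variance $\approx 2V_k$ of the increments, recognize the diffusion limit $dY_t=(1-\delta)\,dt+\sqrt{2Y_t}\,dB_t$ as (half of) a squared Bessel process of dimension $2(1-\delta)$, read off the tail exponents $\delta$ and $\delta/2$ from Bessel scaling, and then acknowledge that the real work lies in transferring these diffusion tails to the discrete process with sharp constants. Your Wald/optional-stopping derivation of $c=1-\delta$ is a clean self-contained version of what the paper outsources to \cite[Lem.\ 3.3]{BS08a} and \cite[Lem.\ 17]{KZ08}, and your renewal-plus-Tauberian suggestion for the sharp constants is one reasonable route, though the paper itself stops at ``the bulk of the work is to transfer these results'' and defers entirely to \cite{KM11}.
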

For future reference in Section \ref{flt} we shall record the following corollary of Theorem \ref{X} and (\ref{corr}) for transient
ERW.
\begin{cor}\label{halifax}
  Assume {\rm (IID), (BD),} and {\rm (WEL)} and let $\delta>1$. Then
there are constants $C_1,C_3\in(0,\infty)$ such that 
\begin{align}
\label{tailx}
\lim _{n \rightarrow \infty} n^\delta P_0(X_{\tau_2}-X_{\tau_1}>n) =
C_1,\\
\label{tailt}
  \lim_{n\to\infty}n^{\delta/2}P_0\left(\tau_2-\tau_1>n\right)=C_3.
\end{align}
\end{cor}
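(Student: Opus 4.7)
\textbf{Plan of proof of Corollary \ref{halifax}.} The plan is to deduce the two asymptotics from Theorem \ref{X} by invoking the distributional identifications in (\ref{corr}). The first relation in (\ref{corr}) is a marginal equality in law, but the second one records a pointwise relationship on the joint level (since $\sigma^V$, $A^V$ on one side and $X_{\tau_2}-X_{\tau_1}$, $\tau_2-\tau_1$ on the other are built from the same underlying processes). Consequently we may treat it as the joint identity
\[
\bigl(\sigma^V,\ A^V\bigr)\ \stackrel{\mathrm{d}}{=}\ \Bigl(X_{\tau_2}-X_{\tau_1},\ \tfrac{\tau_2-\tau_1-(X_{\tau_2}-X_{\tau_1})}{2}\Bigr),
\]
so in particular $\tau_2-\tau_1\stackrel{\mathrm{d}}{=}\sigma^V+2A^V$.

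For (\ref{tailx}): taking marginals gives $P_0(X_{\tau_2}-X_{\tau_1}>n)=P_0(\sigma^V>n)$, so Theorem \ref{X} yields immediately $n^{\delta}P_0(X_{\tau_2}-X_{\tau_1}>n)\to C_1$ with the same constant $C_1$.

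For (\ref{tailt}): write $\tau_2-\tau_1\stackrel{\mathrm{d}}{=}2A^V+\sigma^V$ and exploit that, under $\delta>1$, the tail of $\sigma^V$ (of order $n^{-\delta}$) is strictly lighter than that of $A^V$ (of order $n^{-\delta/2}$). Concretely, for any $\eps\in(0,1)$ a standard truncation at level $\eps n$ gives
\[
P_0\bigl(2A^V>n\bigr)\ \le\ P_0\bigl(2A^V+\sigma^V>n\bigr)\ \le\ P_0\bigl(2A^V>(1-\eps)n\bigr)+P_0\bigl(\sigma^V>\eps n\bigr).
\]
Multiplying by $n^{\delta/2}$ and applying Theorem \ref{X} to each term, the second term on the right is $O(n^{\delta/2-\delta})=o(1)$ because $\delta>0$, while the first term converges to $C_2 2^{\delta/2}(1-\eps)^{-\delta/2}$; the lower bound converges to $C_2 2^{\delta/2}$. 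Letting $\eps\downarrow 0$ yields (\ref{tailt}) with $C_3:=C_2 2^{\delta/2}$.

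The main point requiring care, and really the only non-routine ingredient, is the joint version of (\ref{corr}): one has to verify that $A^V$ and $\sigma^V$ correspond, on the \emph{same} realization, to the centered time and the spatial displacement between the regeneration times $\tau_1$ and $\tau_2$, rather than only as marginal laws. This is where the branching-process bookkeeping carried out before (\ref{corr}) (with the identity $T_n=n+2\sum_{0\le k\le n}J_{n,k}^\downarrow$ and the embedded regeneration epochs) is used; given that bookkeeping, (\ref{tailt}) is essentially a soft dominance argument between two heavy tails of different exponents.
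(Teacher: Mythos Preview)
Your argument is correct and matches the paper's intended route: the corollary is stated there as an immediate consequence of Theorem~\ref{X} and (\ref{corr}), with no further proof given, and your sandwich between the $n^{-\delta/2}$ tail of $A^V$ and the lighter $n^{-\delta}$ tail of $\sigma^V$ is exactly how one fills in the details.

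One small simplification worth noting: the joint version of (\ref{corr}) that you flag as the delicate point is in fact not needed. You can run the truncation directly on the regeneration side,
\[
P_0\bigl(\tau_2-\tau_1-(X_{\tau_2}-X_{\tau_1})>n\bigr)\ \le\ P_0(\tau_2-\tau_1>n)\ \le\ P_0\bigl(\tau_2-\tau_1-(X_{\tau_2}-X_{\tau_1})>(1-\eps)n\bigr)+P_0\bigl(X_{\tau_2}-X_{\tau_1}>\eps n\bigr),
\]
and then invoke only the two \emph{marginal} equalities in (\ref{corr}) to translate each term into a statement about $A^V$ or $\sigma^V$. This sidesteps the bookkeeping concern entirely. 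That said, the joint identity does hold (it is what \cite[Lem.\ 12, (30)]{KZ08} establishes), so your version is also fine as written.
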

Combining (\ref{tailt}) with Theorem~\ref{lln2} we
obtain Theorem~\ref{bal}.
\end{proof}
\begin{proof}[Idea of the proof of Theorem \ref{X}]
The main point
is that the process $V$ killed upon reaching $0$ is reasonably well
described by a simple diffusion. The parameters of such a diffusion
can be easily computed at the heuristic level. For $V_k\ge M$,
(\ref{defV}) implies that
\begin{equation*}
V_{k+1}-V_k=\sum_{m=1}^M \zeta^{(k)}_m-M+1+\sum_{m=1}^{V_k-M+1}(\xi_m^{(k)}-1).
\end{equation*}
By 
\cite[Lem.\  3.3]{BS08a} or \cite[Lem.\ 
17]{KZ08},
\begin{equation*}
E_0\left[\sum_{m=1}^M \zeta^{(k)}_m-M+1\right]=1-\delta.
\end{equation*}
The term $\sum_{m=1}^M \zeta^{(k)}_m-M+1$ is independent of 
$\sum_{m=1}^{V_k-M+1}(\xi_m^{(k)}-1)$. When $V_k$ is large, the latter is, conditioned on $V_k$,
approximately normal with mean $0$ and variance essentially equal to $2V_k$. 
Therefore, the relevant diffusion should be given by the following
stochastic differential equation:
\begin{equation*}
dY_t=(1-\delta)\,dt+\sqrt{2 Y_t}\,dB_t,\quad Y_0=y>0, \quad t\in[0,\tau^Y],
\end{equation*}
where $\tau^Y=\inf\{t\ge 0\,|\,Y_t=0\}$.  Observe that $2Y_t$ is a
squared Bessel process of dimension $2(1-\delta)$. Thus, if $\delta>0$
then $\tau^Y<\infty$ a.s..  Using the scaling properties of $Y_t$,
it is not hard to compute the tail behavior of $\tau^Y$ and
$\int_0^{\tau^Y}Y_s\,ds$. The bulk of the work is to transfer these
results to $\sigma^V$ and $A^V$.
\end{proof}
\begin{remark}\label{p2} {\bf ($d=1$, ``have your cookie and eat it" RW)} {\rm According to \cite[Th.\ 3]{P10} the statement of Theorem \ref{bal} (except for possibly the critical case $|\delta|=2$) also holds for the measure $\PP$ which reflects ``have your cookie and eat it" environments as described in Remark \ref{haveit}.
} 
\end{remark}

\begin{re}{\rm {\bf ($d=1$, $\delta$ does not characterize ballisticity under assumption (SE))}
The following example shows that one cannot replace in Theorem \ref{bal} assumption (IID) by (SE), not even under the additional assumption (POS$_1$). The first such example in this context was constructed in \cite[p.\ 290]{MPV06}. 
Thus, while $\delta$ characterizes recurrence and transience for $d=1$ also in the general setting of (SE) it is not sufficient to  characterize ballisticity under this assumption.  

Let $1/2<p\le 1$.
Then there is a RWRE with speed $v=0$ in a stationary and ergodic environment which uses exactly $p$ and $1/2$ as  transition probabilities to the right. More formally,  there is a measure $\PP$ on $\Om$ which satisfies (SE) and 
\[
\PP[\forall i\in\N: \om(0,1,i)=p]=1-\PP[\forall i\in\N: \om(0,1,i)=1/2]>0
\]
such that  $P_0$-a.s.\  $X_n/n\to 0$ as $n\to\infty$.
(Note that (POS$_1$) and $\delta=\infty$ hold.) 

To construct such $\PP$ let $(B_z)_{z\in\Z}$ be a stationary and ergodic $\{0,1\}$-valued process on some probability space with probability measure $P$ such that $P[B_0=1]>0$ and $E[\tilde Z]=\infty$, where
$\tilde Z:=\inf\{z\in \N_0\mid B_{-z}=1\}$. For example, $(B_z)_{z\in\Z}$ could be a two-sided stationary discrete renewal process with i.i.d.\ inter-point distances which have an infinite second moment.
Then set for all $i\in\N$, $\wom(z,1,i)=p$ if $B_z=1$ and $\wom(z,1,i)=1/2$
if $B_z=0$ and let $\PP$ be the distribution of $\wom$ on $\Om$. 
It then follows from \cite[Th.\ 2.1.9 (c)]{Zei04} that $P_{0}$-a.s.\ $X_n/n\to 0$ as $n\to\infty$.

To construct an ERW example which also satisfies (BD) one can replace
all cookies $\om(z,\cdot,i)$ with $i$ larger than some fixed $M$ by
placebo cookies. By Proposition \ref{mono} this
cannot increase the speed. However, since (POS$_1$) still holds the
speed cannot become negative by Theorem \ref{lln} either and is therefore
still 0.  }
\end{re}
\begin{remark} {\rm ($d=1$, {\bf $v$ is
      not a function of $\delta$.}) While under the conditions of
    Theorem \ref{bal} the value of $\delta$ determines whether $v$ is
    positive, zero, or negative, it does not determine the value of
    $v$, not even under strong assumptions.  
For example, let $p\in(1/2,1]$ and $M\in\N$ such that $(2p-1)M>2$ and define 
$\om_{p,M}\in\Om$ by setting for all $z\in\Z$, $\om_{p,M}(z,1,i):=p$ if $i\le M$ and $\om_{p,M}(z,1,i):=1/2$ if
    $i>M$. Then the parameter $\delta$ corresponding to $\PP:=\delta_{\om_{p,M}}$ is equal to $(2p-1)M$ and therefore the corresponding ERW has speed $v>0$ due to Theorem \ref{bal}. Now pick $M'\in\N$ such that $M'>\delta/v$ and set $p':=(1+\delta/M')/2$. Then $\PP':=\delta_{\om_{p',M'}}$ has the same parameter $\delta'=\delta$ as $\PP$, yet its induced speed $v'$ is, due to Proposition \ref{mono}, not larger than the speed $v''=2p'-1=\delta/M'<v$ of the asymmetric
    nearest neighbor RW on $\Z$ which corresponds to $\PP'':=\delta_{\om_{p',\infty}}$.
}
\end{remark}
\subsection{Results for $d\ge 2$}
It was observed in \cite[Th.\ 5]{BW03} that ERW under assumption (BW)
is ballistic if $d\ge 4$ and left as an open question whether the same
holds true in dimensions 3 and 2. In two unpublished preprints this
question was answered in the affirmative by Kozma first for $d=3$ and
then also for $d=2$.  The paper \cite{BR07} uses a different method to
prove this statement. Its authors obtain a lower bound on the number of
tan points to show that the expected time $E_0[\tau_2-\tau_1]$ between
two successive regeneration times is finite which, by (\ref{rudi}),
implies ballisticity. 

However, as we already mentioned before Theorem
\ref{rt2}, methods involving tan points might not be easily adaptable
to more general settings. Using a related but more general approach
the following result about ballisticity was obtained in \cite{MPRV}.  Here we only
mention that \cite{MPRV} uses martingale techniques and completely
avoids tan points.
A key ingredient of the proof is a lower bound on the
growth of the range of the ERW \cite[Prop.\ 4.1]{MPRV}.  The positivity
condition plays a crucial role in the proofs. (Recall from Remark \ref{MP} that the strong law of large numbers holds in this setting.)
\begin{theorem}\label{bal2}{\bf ($d\ge 2$, ballisticity, \cite[Th.\ 1 (i)]{BR07}, \cite[Th.\ 1.2 (i)]{MPRV})}\\
Let $\ell\in \R^d\backslash\{0\}$ and assume {\rm (MPRV$_\ell$), (IID)}, and {\rm (UEL)}.
Then the ERW is ballistic and its velocity $v$  satisfies $v\cdot \ell>0$. 
\end{theorem}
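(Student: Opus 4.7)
First, I would reduce the claim to a statement about the projection $X_n\cdot\ell$. Theorem~\ref{rt2} (whose hypotheses are contained in ours, since (MPRV$_\ell$) forces $\delta\cdot\ell\ge\lambda>0$) yields $P_0[A_\ell]=1$. Combining Remark~\ref{MP} with Theorem~\ref{LLN} applied to a basis $\ell_1,\ldots,\ell_d$ of $\R^d$ chosen in a sufficiently small neighborhood of $\ell$ then produces a strong law of large numbers: $X_n/n\to v$ $P_0$-a.s.\ for some deterministic $v\in\R^d$ with $v\cdot\ell\ge 0$. The task is thus to establish the strict inequality $v\cdot\ell>0$.

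For this I would use the martingale-and-range method of \cite{MPRV}, avoiding tan points altogether. Introduce
$$D_n:=\sum_{j=0}^{n-1}\sum_{e\in\mathcal E}\om(X_j,e,i_j)\,e\cdot\ell,\qquad i_j:=\#\{k\le j\mid X_k=X_j\},$$
so that $M_n:=X_n\cdot\ell-D_n$ is, for every $\om\in\Om$, a $P_{0,\om}$-martingale with respect to the natural filtration of $X$; in particular $E_0[M_n]=0$. The decisive consequence of (MPRV$_\ell$) is that every cookie beyond the first at a given site is balanced ($\om(z,e,i)=\om(z,-e,i)$ for $i\ge 2$) and thus contributes $\sum_e\om(z,e,i)\,e=0$ to $D_n$; only the first cookie at each visited site contributes, and by at least $\lambda$. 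Hence
$$D_n\ge\lambda R_n\qquad P_0\text{-a.s.,}\qquad R_n:=\#\{X_0,\ldots,X_n\},$$
and taking expectations yields
$$E_0[X_n\cdot\ell]=E_0[D_n]\ge\lambda\,E_0[R_n].$$

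The heart of the argument is then a linear lower bound on the expected range,
$$E_0[R_n]\ge c\,n\qquad\text{for some $c>0$ and all $n$ sufficiently large,}$$
which is the content of \cite[Prop.~4.1]{MPRV}. Granted this bound, dividing by $n$, invoking bounded convergence (since $|X_n\cdot\ell|\le n\|\ell\|$), and using the SLLN established in the first step gives $v\cdot\ell\ge\lambda c>0$.

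The main obstacle is the linear-range estimate itself: it must be proved \emph{without} using the ballisticity one is trying to establish, since otherwise the argument is circular. My strategy would be to combine (UEL) and (IID) with the regeneration decomposition of Lemma~\ref{rs}, noting that each regeneration contributes at least one new site so that $R_{\tau_k}\ge k$, and then to control the time spent between regenerations by a slab-by-slab analysis: using uniform ellipticity and an escape-to-the-right argument in the spirit of Lemma~\ref{D2}, one shows that the number of steps the walk spends in any slab $\{x:x\cdot\ell\in[m,m+1)\}$ is, in expectation and uniformly in $m$, controlled. Turning such estimates into the required linear range growth without circularly invoking ballisticity is the delicate technical step.
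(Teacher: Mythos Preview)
Your reduction via Theorems~\ref{rt2} and~\ref{LLN}, and the martingale identity $E_0[X_n\cdot\ell]=E_0[D_n]\ge\lambda\,E_0[R_n]$, match exactly what the paper describes: the survey does not give its own proof of Theorem~\ref{bal2} but cites \cite{BR07,MPRV}, and singles out the range lower bound \cite[Prop.~4.1]{MPRV} as the key step of the martingale approach that ``completely avoids tan points''. So the architecture of your sketch is correct and coincides with the paper's account.

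The gap is exactly where you flag it, but it is a genuine circularity rather than merely a delicate technicality. Uniformly bounding the expected time the walk spends in each $\ell$-slab \emph{is} ballisticity: combined with $E_0[(X_{\tau_2}-X_{\tau_1})\cdot\ell]<\infty$ from Lemma~\ref{rs}, a uniform slab-time bound immediately gives $E_0[\tau_2-\tau_1]<\infty$, and by (\ref{rudi}) this is $v_\ell>0$---the range estimate would then be superfluous. Likewise, the route ``$R_{\tau_k}\ge k$, now control $\tau_k$'' needs $E_0[\tau_2-\tau_1]<\infty$ to turn this into $E_0[R_n]\ge cn$. So your proposed strategy for \cite[Prop.~4.1]{MPRV} assumes the conclusion. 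The actual argument in \cite{MPRV} does not pass through moments of regeneration times; it exploits the specific structure of (MPRV$_\ell$)---that every cookie after the first at a site is balanced, so $(X_n\cdot\ell)_n$ behaves as a genuine martingale between arrivals at fresh sites---together with (UEL) to bound the occupation of levels directly, without presupposing finiteness of $E_0[\tau_2-\tau_1]$. That direct occupation-type argument is the missing ingredient in your sketch.
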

According to \cite[Th.\ 1.1]{MPRV} this result can also be extended to certain processes which do not fit into the framework considered here. For more recent developments in this direction see \cite{MP}.
\begin{remark}\label{pnc}{\bf ($d\ge 2$, positive and negative
    cookies)} {\rm Little is known about ballisticity outside of the
    setting of Theorem \ref{bal2}.  In \cite[Section 9]{KZ08} an
    example is given for an ERW in $d\ge 4$ which is ballistic in
    spite of $\delta=0$. For $d\ge 9$, \cite[Th.\ 2.2]{H} shows, using
    lace expansion, that if the drift induced by the first cookie is
    strong enough the first cookie can overrule the subsequent cookies
    and determine the direction of the velocity. On the other hand,
    for $d\ge 2$, if the first cookie is weak enough and all the
    subsequent cookies induce a drift in the opposite direction then
    their influence might win, see \cite[Lem.\ 2.4]{H}.}
\end{remark}
\section{Limit laws and functional limit theorems}\label{flt}
In this section we discuss limit laws and functional limit theorems
for ERWs under the averaged measure $P_0$. Quenched limit laws are mentioned in  Problem and Example \ref{pee} at the end of this section. 

\subsection{Results for $d=1$}\label{flt1} 
There is a rich variety of limit laws and limit processes for ERW in
$d=1$. The limit processes obtained so far in the ERW literature are
Brownian motion \cite{KZ08}, a certain class of Brownian
motions perturbed at extrema \cite{Do11, DK}, the running maximum of
Brownian motion \cite{DK}, as well as, under a different kind of
scaling, so-called excited Brownian motions \cite{RS}. As a new result
we prove here functional limit theorems for the model of Section
\ref{1.4} with $\delta\in (1,2)\cup(2,4]$ with limit processes which
are strictly stable with indices $\alpha\in [1/2,2]$ and skewness
parameter 1 (totally skewed to the right). The proof is based on
\cite{Li78}.

Let us outline the plan of this subsection. First, we state functional
limit theorems in the recurrent case. In the transient regime, we
start with  convergence of one-dimensional distributions.  Then
we recall and compare the notions of convergence in $J_1$ and $M_1$ on
the Skorokhod space.
Further, we state and prove, using \cite{Li78}, functional limit theorems
for transient ERWs. Finally, we consider convergence to excited
Brownian motion.

Let $D[0,\infty)$ be the Skorokhod space, i.e.\ the space of
right-continuous real-valued functions with left limits on
$[0,\infty)$ endowed with the standard Skorokhod topology (i.e.\
$J_1$, see definition (\ref{dj}) below).  Denote by
$\overset{J_1}{\Rightarrow}$ the weak convergence on $D[0,\infty)$ in
$J_1$. Unless stated otherwise, we shall assume that all processes
start at the origin at time $0$.

We begin with the recurrent case and define the candidate for the
limiting process, the so-called  {\em Brownian motion perturbed at extrema}
(\cite{Da96,PW97,CD99,D99}).  Let $\alpha,\beta\in(-\infty,1)$ and let $B=(B(t))_{t\ge 0}$ denote a standard
Brownian motion.  
Then 
\begin{equation}
  \label{abp}
  X_{\alpha,\beta}(t)=B(t)+\alpha\sup_{s\leq t}
X_{\alpha,\beta}(s)+\beta \inf_{s\leq t} X_{\alpha,\beta}(s)
\end{equation}
has a pathwise unique solution $X_{\alpha,\beta}=(X_{\alpha,\beta}(t))_{t\ge 0}$ that is adapted to the
filtration of $B$ (see e.g.\ \cite[Th.\ 2]{CD99}). It is called $(\alpha, \beta)$-perturbed Brownian motion.
In the 90's the main purpose of the study of RWs $(Y_n)_{n\ge 0}$  perturbed at extrema, as introduced in Section \ref{ext}, was to prove functional limit theorems for these walks. And indeed, for certain values of the parameters $p$ and $q$ one has 
\[\frac{Y_{[n\cdot]}}{\sqrt n} \overset{J_1}{\Rightarrow }X_{\al,\beta}(\cdot) \ \text{as
}n\to\infty, \]
where $\al=2-1/p$ and $\beta=2-1/(1-q)$, 
see e.g.\ \cite[Th.\ 1.2]{Da96} for an accessible statement and also \cite{D99}.
 
For the ERWs from Sections \ref{1.3} and \ref{1.4} there are the following results:  
\begin{theorem}{\bf ($d=1$, convergence to Brownian motion perturbed at extrema \cite[Th.\ 1, Rem.\  2]{Do11}, \cite[Th.\ 1]{DK})}
\label{ThERWRecLim}
Let $\delta\in[0,1)$.  Assume either 
\begin{equation}\label{DD}
  \text{{\rm (SE), (POS$_1$), (UEL)}, and}\quad \EE\left[\sup\{i\in\N:\ 
    |\omega(0,1,i)-1/2|\ge \epsilon\} \right]<\infty
\end{equation}
or {\rm (IID), (BD),} and {\rm (WEL).}
Then under $P_0,$ \[\dfrac{X_{[n\cdot]}}{\sqrt
  n}\overset{J_1}{\Rightarrow }X_{\delta,-\delta}(\cdot)\ \text{as
}n\to\infty. \]
\end{theorem}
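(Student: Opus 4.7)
The plan is to combine the Doob--Meyer decomposition of $X$ with an identification of the compensator as being, up to a small error, a linear function of the running extrema, and then to close the loop via pathwise uniqueness for the defining equation (\ref{abp}). Write $X_n=M_n^*+D_n$ where
\[
D_n:=\sum_{m=0}^{n-1}\bigl(2\om(X_m,1,\#\{k\le m:X_k=X_m\})-1\bigr),\qquad M_n^*:=X_n-D_n,
\]
so that $M_n^*$ is a $P_{0,\om}$-martingale with increments bounded by $2$, as already used in the sketch of Theorem~\ref{rt}. The strategy is: (i) prove a martingale functional CLT for $M_{[n\cdot]}^*/\sqrt n$; (ii) show that $D_{[nt]}=\delta(M_{[nt]}-m_{[nt]})+o(\sqrt n)$ in $P_0$-probability uniformly in $t$ on compacts, where $M_n:=\max_{k\le n}X_k$ and $m_n:=\min_{k\le n}X_k$; (iii) pass to subsequential limits and identify them via pathwise uniqueness of (\ref{abp}).

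For (i), the conditional variance of the step at time $m$ equals $1-(2\om(X_m,1,i_m)-1)^2$, which equals $1$ whenever a placebo cookie is consumed. The total number of non-placebo consumptions by time $n$ is bounded by $\sum_{z\in[m_n,M_n]}M(z)$, which is $O(R_n)$ under (BD) with $R_n:=M_n-m_n+1$, and also $O(R_n)$ $\PP$-a.s.\ under (\ref{DD}) by the ergodic theorem and the moment condition on $\sup\{i:|\om(0,1,i)-1/2|\ge\eps\}$. In the recurrent regime one has $R_n=O(\sqrt n\log n)$ in $P_0$-probability (an a priori estimate obtained via the martingale $M^*$ and a bootstrap), so the conditional quadratic variation of $M^*$ divided by $n$ tends to $1$ in probability; since the increments are bounded, the martingale FCLT (Lindeberg is trivial) yields $M_{[n\cdot]}^*/\sqrt n\overset{J_1}{\Rightarrow}B$ for a standard Brownian motion $B$.

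For (ii), partition $D_n=\sum_{z}\Delta_n(z)$ where $\Delta_n(z):=\sum_{i=1}^{N_n(z)}(2\om(z,1,i)-1)$ and $N_n(z):=\#\{k\le n:X_k=z\}$. For each $z$ with $N_n(z)\ge M(z)$, $\Delta_n(z)$ equals $\Delta_\infty(z):=\sum_{i\ge 1}(2\om(z,1,i)-1)$, and $(\Delta_\infty(z))_{z\in\Z}$ is stationary ergodic (by (SE) or (IID)) with $\EE[\Delta_\infty(0)]=\delta$ and integrable (bounded under (BD), $L^1$ under (\ref{DD})). The ergodic theorem applied along the range gives
\[
\sum_{z\in[m_n,M_n],\,N_n(z)\ge M(z)}\Delta_\infty(z)=\delta R_n+o(R_n)\qquad P_0\text{-a.s.\ on }\{R_n\to\infty\}.
\]
The contribution of unsaturated sites is bounded by $\sum_{z:\,0<N_n(z)<M(z)}M(z)$, which for a nearest-neighbor walk on $\Z$ is concentrated on sites within distance $\max_zM(z)$ of $M_n$ or $m_n$ and is $O(1)$ under (BD), respectively $o(R_n^{1/2})$ under (\ref{DD}) by the moment condition. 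Since $R_n=O(\sqrt n\log n)$, both error terms are $o(\sqrt n)$, proving (ii).

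Combining (i) and (ii), write
\[
\frac{X_{[nt]}}{\sqrt n}=\frac{M_{[nt]}^*}{\sqrt n}+\delta\sup_{s\le t}\frac{X_{[ns]}}{\sqrt n}-\delta\inf_{s\le t}\frac{X_{[ns]}}{\sqrt n}+\varepsilon_n(t),
\]
with $\sup_{t\le T}|\varepsilon_n(t)|\to 0$ in $P_0$-probability for every $T$. Tightness of $X_{[n\cdot]}/\sqrt n$ in $D[0,\infty)$ follows from tightness of $M_{[n\cdot]}^*/\sqrt n$ together with a priori sup-norm estimates exploiting $\delta<1$ (immediate by contraction for $\delta<1/2$, requiring a sharper separate-extrema analysis for $\delta\in[1/2,1)$). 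Any subsequential limit $Y$, jointly with some $\widetilde B\stackrel{\rm d}{=}B$, satisfies
\[
Y(t)=\widetilde B(t)+\delta\sup_{s\le t}Y(s)-\delta\inf_{s\le t}Y(s)
\]
by continuity of the sup and inf functionals on $C[0,T]$. Pathwise uniqueness for (\ref{abp}) (\cite[Th.\ 2]{CD99}) then identifies $Y$ with $X_{\delta,-\delta}$, and convergence of the full sequence in $J_1$ follows. The main obstacle is step (ii) under the weaker hypothesis (\ref{DD}): controlling the unsaturated contribution requires combining the moment condition on the cookie-stack depth with uniform estimates on the number of visits near the running extrema, and the recurrence ($\delta<1$) is used essentially to ensure that the range grows and that each site is revisited often enough to saturate its stack.
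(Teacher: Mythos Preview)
The paper does not give its own proof of this theorem; it is quoted from \cite{Do11} and \cite{DK}. Your overall architecture---Doob--Meyer decomposition $X_n=M_n^*+D_n$, martingale FCLT for $M^*_{[n\cdot]}/\sqrt n$, approximation of $D_n$ by $\delta$ times the range, and identification of subsequential limits via pathwise uniqueness for (\ref{abp})---is exactly the strategy of those references, so at the level of strategy your sketch is on target.

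There is, however, a genuine error in your step (ii). You assert that the unsaturated sites $\{z:0<N_n(z)<M(z)\}$ are ``concentrated on sites within distance $\max_zM(z)$ of $M_n$ or $m_n$'' and hence contribute $O(1)$ under (BD). This is false. Take the trajectory $0,1,2,\ldots,k$: at time $n=k$ one has $M_n=k$, $m_n=0$, and \emph{every} site $0,\ldots,k$ is unsaturated (each visited exactly once). More generally, any monotone run of length $\gg M$ produces a long stretch of unsaturated sites far from either extremum; at a typical time $n$ the number of unsaturated sites can be comparable to the range $R_n$, which is of order $\sqrt n$. The first cookie at $z>0$ is indeed consumed when $z$ is the running maximum, but the remaining $M(z)-1$ cookies are consumed on later visits, which need not occur promptly.

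What is actually needed (and is the substantial technical content of \cite{Do11,DK}) is to show that $\sup_{t\le T}\bigl|D_{[nt]}-\delta(M_{[nt]}-m_{[nt]})\bigr|=o_P(\sqrt n)$ despite the fact that the unsaturated set can be large. This is handled not by a pointwise-in-$n$ count of unsaturated sites but by more delicate arguments: a priori bounds on the range using $\delta<1$ (your bootstrap idea is correct here and genuinely uses $\delta<1$), together with control of the cookie consumption along the growth of the range rather than at fixed times, and---under (\ref{DD})---careful use of the moment condition combined with estimates on how quickly sites get revisited after first being reached. Your ergodic-theorem piece $\sum_{z\in[m_n,M_n]}\Delta_\infty(z)=\delta R_n+o(R_n)$ is fine; it is the passage from $\sum_z\Delta_n(z)$ to $\sum_z\Delta_\infty(z)$ that needs a real argument, and the one you give does not work.
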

\begin{remark}{\bf (Strips)} 
  {\rm A similar result holds for
    recurrent  ERWs on strips
    $\Z\times\{0,1,\dots,L-1\}$, $L\in\N$, under assumptions similar to 
    (\ref{DD}), see \cite[Th.\ 1]{Do11}.}
\end{remark}
\begin{theorem}{\bf ($d=1$, convergence to the running maximum of Brownian motion \cite[Th.\ 2]{DK})}
\label{ThERWCrit}
Assume {\rm (IID), (BD),} and {\rm (WEL)} and
let $\delta=1$ and $S(t):=\max_{s\le t}B(s)$, $t\ge 0$. Then there exists a
constant $b>0$ such that under $P_0,$
\begin{equation}\label{mex}
\frac{X_{[n\cdot]}}{b \sqrt{n}\log n}\overset{J_1}{\Rightarrow} S(\cdot)\
\text{as }n\to\infty.
\end{equation}
\end{theorem}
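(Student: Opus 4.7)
The plan is to combine the branching-process machinery from the proof of Theorem \ref{bal} with stable-law convergence at the critical value $\delta=1$, then invert the resulting hitting-time limit. Since $\delta=1$, Theorem \ref{rt} yields recurrence, so every $T_\ell$ is $P_0$-a.s.\ finite; setting $\oX_n:=\max_{k\le n}X_k$, we have $\{\oX_n\ge \ell\}=\{T_\ell\le n\}$. The idea is to establish a functional limit theorem for $(T_{[u\ell]})_{u\ge 0}$, invert to obtain one for $(\oX_{[nt]})_{t\ge 0}$, and separately show that the gap $\oX_n-X_n$ is of lower order than $\sqrt n\log n$.

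For the hitting-time FCLT, I would start from the coin-tossing representation used in the proof of Theorem \ref{bal}, $T_\ell-\ell=2\sum_{k=0}^{\ell-1}J^\downarrow_{\ell,k}+2\sum_{k<0}J^\downarrow_{\ell,k}$, with $(J^\downarrow_{\ell,\ell-k})_{0\le k\le \ell}\stackrel{\rm d}{=}(V_k)_{0\le k\le \ell}$. At $\delta=1$ the Bessel diffusion approximation $dY=(1-\delta)\,dt+\sqrt{2Y}\,dB$ degenerates to $dY=\sqrt{2Y}\,dB$, a squared Bessel process of dimension $0$, which is absorbed at $0$. Plugging $\delta=1$ into Theorem \ref{X} then gives
\[
P_0(\sigma^V>n)\sim \frac{C_1}{n},\qquad P_0(A^V>n)\sim \frac{C_2}{\sqrt n},
\]
so $A^V$ lies in the normal domain of attraction of a one-sided $1/2$-stable law, while $E[\sigma^V\wedge n]\sim C_1\log n$. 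Renewal theory for heavy-tailed interarrivals then shows that the number of completed $V$-life cycles up to step $\ell$ concentrates on $\ell/(C_1\log \ell)$, and summing the i.i.d.\ life-cycle areas via the stable functional limit theorem yields
\[
\left(\frac{(\log \ell)^2}{\ell^2}\sum_{k=0}^{[u\ell]}V_k\right)_{u\ge 0}\ \overset{J_1}{\Rightarrow}\ (\kappa_0\calz_u)_{u\ge 0},
\]
with $\calz$ a $1/2$-stable subordinator and $\kappa_0>0$ explicit in terms of $C_1,C_2$. The macroscopic jumps of $\calz$ come precisely from the atypically large $V$-cycles.

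The main obstacle, in my view, is to show, at the correct scale and despite recurrence, that both $\sum_{k<0}J^\downarrow_{\ell,k}$ and the oscillation $\oX_n-X_n$ are of lower order than $\sqrt n\log n$ (equivalently $\ell^2/(\log \ell)^2$ after inversion). The key observation is that every newly visited negative site still carries a fresh stack of cookies with non-negative average drift (and at most $M$ of them), so such sites act as a soft wall pushing the walker back towards $0$: a comparison with the corresponding branching-process dynamics for excursions into $\{k<0\}$ should give $|\min_{k\le n}X_k|=O_P(\log n)$, and hence $\sum_{k<0}J^\downarrow_{\ell,k}=o_P(\ell^2/(\log \ell)^2)$. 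An analogous downward-excursion estimate on the positive side, applied to excursions that each start at a new maximum and hence at a fresh cookie stack, should yield $\sup_{k\le n}(\oX_k-X_k)=o_P(\sqrt n\log n)$.

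With these two estimates in hand, the branching-process FCLT lifts to $((\log \ell)^2T_{[u\ell]}/\ell^2)_{u}\overset{J_1}{\Rightarrow}\kappa_0\calz$. Since inversion is $J_1$-continuous at strictly increasing subordinator paths, one obtains $(\oX_{[nt]}/(\sqrt n\log n))_t\overset{J_1}{\Rightarrow}b\,\calz^{-1}(\cdot)$ for an explicit $b>0$. By L\'evy's classical identity, the inverse of a $1/2$-stable subordinator is, up to a time change absorbable into $b$, the running maximum of a standard Brownian motion; combined with the negligibility of $\oX_n-X_n$, this produces (\ref{mex}).
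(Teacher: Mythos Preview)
Your high-level plan coincides with the sketch the paper gives right after the theorem: first establish (\ref{mex}) with the running maximum $\oX_{[n\cdot]}:=\max_{i\le[n\cdot]}X_i$ in place of $X_{[n\cdot]}$, then show that $\sup_{k\le n}(\oX_k-X_k)=o_P(\sqrt n\log n)$ (this is exactly what the paper attributes to \cite[Lem.~5.2]{DK}). Your identification of the inverse of a $1/2$-stable subordinator with the running maximum of Brownian motion via L\'evy's theorem is also correct, and plugging $\delta=1$ into Theorem~\ref{X} to get the tails of $\sigma^V$ and $A^V$ is the right starting point.

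The genuine gap is the sentence ``the branching-process FCLT lifts to $((\log\ell)^2 T_{[u\ell]}/\ell^2)_u\overset{J_1}{\Rightarrow}\kappa_0\calz$.'' The identity $(J^\downarrow_{n,n-k})_{0\le k\le n}\stackrel{\rm d}{=}(V_k)_{0\le k\le n}$ holds for each \emph{fixed} $n$; it does not couple the down-crossing arrays for different endpoints to a single realisation of $V$. Concretely, the process $u\mapsto\sum_{k\le[u\ell]}V_k$ (for which you claim a functional limit) is \emph{not} equal in law to $u\mapsto\sum_{k\le[u\ell]}J^\downarrow_{[u\ell],k}$, since the latter uses a different backward recursion, started from a different site, for each value of $u$. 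So the $V$-machinery delivers the one-dimensional law of $T_\ell$ but not the joint law of $(T_{[u_1\ell]},\ldots,T_{[u_m\ell]})$, and the functional statement for $T$ does not follow. Note also that Theorem~\ref{lin}, which the paper uses for $\delta>1$, is unavailable here because the cycle length $\sigma^V$ has infinite mean at $\delta=1$. Closing this gap requires genuinely new input (some form of asymptotic independence of the increments $T_{[u_2\ell]}-T_{[u_1\ell]}$, or a direct tightness-plus-identification argument for $\oX$); the paper does not supply it either, deferring entirely to \cite{DK}. A minor point: your auxiliary claim $|\min_{k\le n}X_k|=O_P(\log n)$ is much stronger than the $o_P(\sqrt n\log n)$ actually needed and would itself require justification.
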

\begin{problem}
  {\rm Replace the assumptions {\rm (IID), (BD),} and {\rm (WEL)} of Theorem \ref{ThERWCrit} by (\ref{DD}).  Does the result of
  Theorem~\ref{ThERWCrit} remain true? If yes, can one also remove
(POS$_1$)?}
\end{problem}
Observe that in Theorem~\ref{ThERWCrit} the limiting process is
transient while the original process is recurrent. This might seem
surprising.  For the proof of Theorem~\ref{ThERWCrit} one first shows
that (\ref{mex}) holds with $S_{[n\cdot]}:=\max_{\, 0\le i\le
  [n\cdot]}X_i$ instead of $X_{[n\cdot]}$.  Then it is argued
(\cite[Lem.\ 5.2]{DK}) that with probability approaching 1 the maximum
amount of ``backtracking'' of $X_j$ from $S_j$ for $j\le [Tn]$ is
negligible on the scale $\sqrt{n}\log n$ for every $T<\infty$.

We turn now to the transient case $\delta>1$. 
To describe limit laws of transient
ERWs we need the following notation.  For $\alpha\in (0,2]$ and $b>0$
denote by $Z_{\alpha,b}$ a random variable such that for all $u\in\R$,
  \begin{equation}
    \label{sl}
    \log E\left[e^{iuZ_{\alpha,b}}\right]=
    \begin{cases}
      -b|u|^\alpha(1-i\tan (\pi\alpha/2)\operatorname{sign} u),
      &\text{if } \alpha\ne
      1;\\[2mm]
      -b|u|\left(1+\dfrac{2i}{\pi}\log|u|\operatorname{sign}u\right),&\text{if }
      \alpha=1,
    \end{cases}
  \end{equation}
see e.g.\ \cite[(3.7.11)]{Du}.
  Here we agree to set $\operatorname{sign} 0$ to $0$. Observe also
  that $Z_{2,b}$ is just a centered normal random variable with
  variance $2b$. 
\begin{theorem}\label{llth}{\bf ($d=1$, convergence of one-dimensional distributions) 
} Assume {\rm (IID), (BD),} and {\rm (WEL)} and let 
$v$ be the speed of the ERW as in Theorem \ref{lln}. The following results hold under $P_0$.
  \begin{itemize}
  \item [(i)] If $\delta\in(1,2)$ then there is a constant $b>0$ such
    that as $n\to\infty$,
    \begin{equation*}
      \frac{T_n}{n^{2/\delta}}\Rightarrow
      Z_{\delta/2,b}\label{i}\quad\text{and}
      \quad\frac{X_n}{n^{\delta/2}}\Rightarrow
      (Z_{\delta/2,b})^{-\delta/2}.
    \end{equation*}
  \item [(ii)] If $\delta=2$ then there is a positive constant $c$
    such that
    \begin{equation}\label{ii}
      \frac{T_n}{n\log n}\to \frac1{c}\quad\text{and}\quad
      \frac{X_n}{n/\log n}\to
      c\ \ \text{in probability as }n\to\infty.
    \end{equation}
Moreover, there are a positive constant $b$ and
      functions $D(n)\sim \log n$ and $\Gamma(n)\sim 1/\log n$ such
      that as $n\to\infty$,
    \begin{equation}
      \frac{T_n-c^{-1}nD(n)}{n}\Rightarrow
      Z_{1,b}\label{iitx}\quad\text{and}\quad
      \frac{X_n-cn\Gamma(n)}{c^2n\log^{-2} n}\Rightarrow
      -Z_{1,b}.
    \end{equation}
\item [(iii)] If $\delta\in(2,4)$ then as $n\to\infty$,
    \begin{equation*}
       \frac{T_n-v^{-1}n}{n^{2/\delta}}\Rightarrow Z_{\delta/2,b}
\label{iii}\quad\text{and}
\quad\frac{X_n-vn}{v^{1+2/\delta}n^{2/\delta}}\Rightarrow
-Z_{\delta/2,b}.
    \end{equation*} 
  \item [(iv)] If $\delta=4$ then there is a constant $b>0$ such that
    as $n\to\infty$,
    \begin{equation*}
       \frac{T_n-v^{-1}n}{\sqrt{n\log n}}\Rightarrow Z_{2,b}
      \label{iv}\quad\text{and}\quad
      \frac{X_n-vn}{v^{3/2}\sqrt{n\log n}}\Rightarrow -Z_{2,b}.
    \end{equation*}
  \item [(v)] If $\delta>4$ then there is a constant $b>0$ such that
    as $n\to\infty$,
    \begin{equation*}
       \frac{T_n-v^{-1}n}{\sqrt{n}}\Rightarrow Z_{2,b}
      \label{v}\quad\text{and}
      \quad\frac{X_n-vn}{v^{3/2}\sqrt{n}}\Rightarrow -Z_{2,b}.
    \end{equation*}
    \end{itemize}
    Moreover, everywhere above $X_n$ can be replaced by $\inf_{i\ge n}X_i$
    or $\sup_{i\le n} X_i$.
  \end{theorem}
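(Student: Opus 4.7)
The plan is to reduce all five cases to classical limit theorems for sums of i.i.d.\ random variables via the regeneration structure. Since $\delta>1$, the walk is transient to the right by Theorem~\ref{rt}, so $P_0[A_1]=1$ and Lemma~\ref{rs} yields an i.i.d.\ sequence of inter-regeneration increments $(S_k,R_k):=(\tau_{k+1}-\tau_k,\,X_{\tau_{k+1}}-X_{\tau_k})$, $k\ge 1$, whose marginal tails are pinned down by Corollary~\ref{halifax}: $P_0[S_1>t]\sim C_3\,t^{-\delta/2}$ and $P_0[R_1>t]\sim C_1\,t^{-\delta}$. The initial segment $(\tau_1,X_{\tau_1})$ has a slightly different law but the same tail exponents, so it contributes only lower-order terms.

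First I would establish the limit laws for $T_n$. Introduce $N_n:=\max\{k\ge 0:X_{\tau_k}\le n\}$; since the walk has unit steps, $T_n=\tau_{N_n+1}-(\text{an overshoot of size at most }S_{N_n+1})$, so the asymptotics of $T_n$ are governed by $\Sigma^S_{N_n}:=\sum_{j=1}^{N_n}S_j$ while $n$ is close to $\Sigma^R_{N_n}:=\sum_{j=1}^{N_n}R_j$. When $\delta>2$, $E_0[R_1]<\infty$ and the strong law gives $N_n\sim n/E_0[R_1]=vn/E_0[S_1]$, so after a controlled correction the random index $N_n$ may be replaced by a deterministic one. The five parts then follow from the classical stable and normal limit theorems applied to $\Sigma^S$: part (i) corresponds to index $\delta/2\in(1/2,1)$ with no centering and limit law $Z_{\delta/2,b}$ totally right-skewed; part (iii) is the centered stable limit of index $\delta/2\in(1,2)$; part (iv) is the logarithmic boundary yielding a normal limit at scale $\sqrt{n\log n}$; and part (v) is the standard CLT. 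The critical case (ii), $\delta=2$, is the $\alpha=1$ boundary which requires the triangular-array stable limit theorem of \cite{Li78} with nonlinear centering $c^{-1}nD(n)$, $D(n)\sim\log n$; the in-probability statement follows from the corresponding weak law since $\log n$ is precisely the truncated-mean growth rate of $S_1$.

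The statements for $X_n$ then follow from those for $T_n$ by the duality $\{X_n\ge k\}=\{T_k\le n\}$ combined with a Taylor expansion around the deterministic scale: in the ballistic regime $\delta>2$, the heuristic $X_n\approx vn - v^2(T_{vn}-v^{-1}\cdot vn)$ produces the sign flip and the correct factors $v^{1+2/\delta}$ (respectively $v^{3/2}$), while in the non-ballistic transient regime $\delta\in(1,2]$ a direct monotone inversion of $T_n\sim n^{2/\delta}Z$ gives $X_n\sim n^{\delta/2}Z^{-\delta/2}$. Replacing $X_n$ by $\sup_{i\le n}X_i$ or $\inf_{i\ge n}X_i$ uses transience plus the tail bound on $S_k$: between successive regenerations the walk never drops below the previous regeneration level, so the fluctuation $\sup_{i\le n}X_i-\inf_{i\ge n}X_i$ is dominated by the single excursion straddling time $n$, whose spatial extent is of order $R_{N_n+1}=o_P(n^{\min(\delta/2,1)})$ --- strictly smaller than every scale appearing in the limits. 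I expect the main obstacle to be the critical cases $\delta\in\{2,4\}$: locating the precise centering functions $D(n)$ and $\Gamma(n)$ and identifying the logarithmic corrections require sharp asymptotics for truncated moments of $S_1$ that go beyond the pure tail equivalence furnished by Corollary~\ref{halifax}, and it is precisely the triangular-array machinery of \cite{Li78} that bridges this gap.
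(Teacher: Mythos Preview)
Your strategy---regeneration structure from Lemma~\ref{rs}, tail asymptotics from Corollary~\ref{halifax}, classical stable/normal limit theorems for the i.i.d.\ increments $(S_k,R_k)$, and then inversion $\{T_k\le n\}=\{\sup_{i\le n}X_i\ge k\}$---is exactly the route the paper takes (deferring the details to \cite{BS08b}, \cite{KM11}, \cite{KZ08}, and \cite{KKS75}).

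Two corrections are worth recording. First, \cite{Li78} is not the tool for the one-dimensional critical case $\delta=2$: that reference is Lindberger's functional limit theorem for cumulative processes and is used in the paper only for Theorems~\ref{1to2} and \ref{2up}. The fluctuation statement (\ref{iitx}) is instead obtained by following \cite[pp.\ 166--168]{KKS75}, with the nonlinear centering $D(n)$ coming from the standard $\alpha=1$ stable limit theorem for i.i.d.\ sums. Second, the paper flags a subtlety at $\delta=2$ that your outline does not isolate: the \emph{spatial} increments $R_k=X_{\tau_{k+1}}-X_{\tau_k}$ have tail exponent exactly $\delta=2$ and hence infinite variance, so the renewal count $N_n$ fluctuates on scale $\sqrt{n\log n}$ rather than $\sqrt n$ (combine \cite[Th.\ 6.3.1]{Wh02} with \cite[Ch.\ 9, Sec.\ 6, Th.\ 2]{GS69}). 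One must check that this extra randomness in the index is still $o(n)$ when passing from $\Sigma^S_{N_n}$ to a deterministically indexed sum---it is, but this is the step where your sketch ``the random index $N_n$ may be replaced by a deterministic one'' hides the actual work.
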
 
  \begin{proof} 
    The regeneration structure described in Lemma \ref{rs} and the
    estimates (\ref{tailx}) and (\ref{tailt}) play an important role
    in the proof.  Statements (i) and (\ref{ii}) were proven under
    additional assumptions, in particular (POS$_1$), in \cite[Th.\
    1.1]{BS08b}. These assumptions were removed in \cite[Rem.\
    9.2]{KM11}. Parts (iii) and (iv) were obtained in \cite[Th.\
    1.3]{KM11}. The second claim of part (v) is contained in
    \cite[Th.\ 3]{KZ08}. Although part (v) was not stated or proven in
    \cite{KM11}, its proof is almost the same as the one of part (iii)
    (see \cite[Proof of Th.\ 1.3, pp.\ 593--594]{KM11}). The only
    difference is that one can use standard limit theorems for square
    integrable random variables instead of those quoted in the
    original proof.

    The fluctuation results (\ref{iitx}) do not seem to appear
    anywhere in the ERW literature.  The proof of (\ref{iitx}) can be
    written along the lines of \cite[pp.\,166--168]{KKS75} taking into
    account \cite[Th.\ 2.1, 2.2, Lem.\ 9.1]{KM11}. The fact that
    $X_{\tau_n}-X_{\tau_{n-1}}$ has infinite second moment (see
    (\ref{tailx})) requires an application of a limit theorem from
    renewal theory, which is different from the one used in
    \cite[p.\,166, line 8 from the bottom]{KKS75}.  Namely, the
    deviations of the number of renewals up to time $n$ have
    fluctuations of order $\sqrt{n\log n}$ instead of $\sqrt{n}$
    (combine \cite[Th.\ 6.3.1]{Wh02} with \cite[Ch.\ 9, Sec.\ 6, Th.\
    2]{GS69}). Apart from this, the proof of
    \cite[pp.\,166--168]{KKS75} goes through essentially word for
    word.
   \end{proof}

   It is natural to expect that the same embedded regeneration
   structure, which was useful in the proof of Theorem \ref{llth},
   will allow us to obtain the corresponding functional limit
   theorems. Theorem \ref{llth} suggests that the limit processes
   should be stable processes, which need not be continuous, and this
   brings us to the question of choosing an appropriate topology on
   the Skorokhod space $D[0,\infty)$.

   In 1956, A.\,V.\ Skorokhod introduced 4 metrics on $D[0,1]$
   (\cite{Sk56}). These metrics generate the corresponding topologies
   on $D[0,1]$, which are called $J_1,\ M_1,\ J_2$, and $M_2$. One of
   them, namely $J_1$, is often called ``the (standard) Skorokhod
   topology'' and is probably the most widely used in the study of
   weak convergence of stochastic processes on the Skorokhod space.
   Nevertheless, when the candidate for the limiting process is not
   continuous, one should not necessarily expect the convergence in
   $J_1$ but only in one of the ``less demanding'' topologies, in our
   case $M_1$. This fact is well-known, for example, in the study of
   heavy traffic limits in queuing theory (see \cite{Wh02} and
   references therein).

Let $\Lambda$ denote the set of
continuous strictly increasing mappings of $[0,1]$ onto itself. Recall (\cite[Sec.\ 12]{Bil99}, \cite[Section 3.3 (3.2)]{Wh02})
that for $x_1,x_2\in D[0,1]$ the Skorokhod $J_1$-distance between
$x_1$ and $x_2$  is defined by
\begin{equation}\label{dj}
\rho_{J_1}(x_1,x_2):=\inf_{\lambda\in\Lambda}\left(\sup_{t\in[0,1]}
|x_1(t)-x_2(\lambda(t))|\vee\sup_{t\in
  [0,1]}|\lambda(t)-t|\right).
\end{equation}

  The Skorokhod $M_1$-distance $\rho_{M_1}(x_1,x_2)$ is informally
  defined as ``the distance between the completed graphs of $x_1$ and
  $x_2$, $\Gamma_{x_1}$ and $\Gamma_{x_2}$''. More precisely
  (\cite[Sec.\ 3.3]{Wh02}), for
  $x\in D[0,1]$ let \[\Gamma_x:=\{(z,t)\in\mathbb{R}\times[0,1]:\
  z=\alpha x(t-)+(1-\alpha)x(t)\ \text{for some }\alpha\in[0,1]\}\subset\R^2,
  \] where $x(0-):=x(0)$, and define the order on $\Gamma_x$ as
  follows: $(z_1,t_1)\le (z_2,t_2)$ if either $t_1<t_2$ or $t_1=t_2$
  and $|z_1-x(t_1-)|\le |z_2-x(t_2-)|$. Denote by $\Pi_x$ the set of all
  parametric representations of $\Gamma_x$, i.e.\ continuous
  non-decreasing (with respect to the above defined order on
  $\Gamma_x$) functions $(u,r)$ taking $[0,1]$ onto $\Gamma_x$. For
  $x_1,x_2\in
  D[0,1]$  let \[\rho_{M_1}(x_1,x_2):=\inf_{(u_i,r_i)\in\Pi_{x_i},\, i=1,2}
  \left(\sup_{t\in[0,1]}|u_1(t)-u_2(t)|\vee
  \sup_{t\in[0,1]}|r_1(t)-r_2(t)|\right).\] 
Clearly, $J_1$-convergence
  implies $M_1$-convergence. The converse is not true.  For both
  examples on Figure~\ref{mj}\footnote{Figure~\ref{mj}:
    $x=\dfrac{1}{4}\,\1_{[0,1/2)}+\dfrac{3}{4}\,\1_{[1/2,1]}$. Left:
    $x_n=\dfrac14\,\1_{[0,1/2-1/n)}+\dfrac12\,\1_{[1/2-2/n,1/2-1/n)}+
    \dfrac34\,\1_{[1/2-1/n,1]}$.  Right: $x_n(t)=
  \begin{cases}
     1/4-1/n+2t/(n-2),&\text{ if }0\le t<1/2-1/n;\\
     3/4+n(t-1/2)/2,&\text{ if }1/2-1/n\le t<1/2;\\
     3/4+2(t-1/2)/n,&\text{ if }1/2\le t\le 1.
   \end{cases}$} 
we have $\rho_{M_1}(x_n,x)\to 0$ but
  $\rho_{J_1}(x_n,x)\not\to 0$ as $n\to\infty$. 
\begin{figure}[h]
  \centering
\includegraphics[height=7cm]{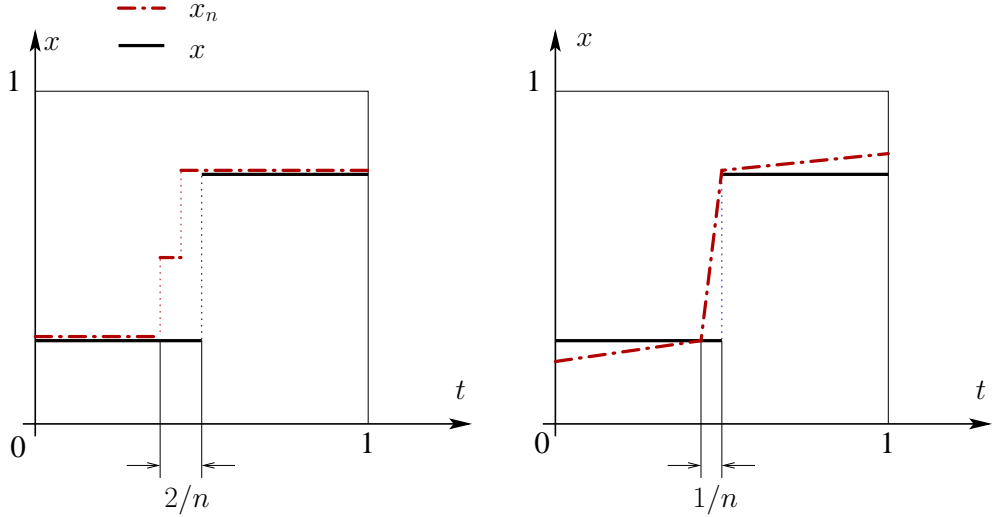}
\caption[]{Unmatched jump of the limiting process.}
\label{mj}
\end{figure}
In the left picture, a
  single jump of the limiting function is achieved by a double
  jump in the approximating functions. The right picture illustrates
  the creation of an unmatched jump in the limiting function by
  continuous approximating functions. Neither situation is an obstacle
  for the $M_1$-convergence but both prevent $J_1$-convergence.

Note however (\cite[Sec.\ 12.4]{Wh02}) that 
\begin{equation}\label{imp}
\mbox{if
$\rho_{M_1}(x_n,x)\to 0$ and $x\in C[0,1]$ then $\rho_{J_1}(x_n,x)\to
0$.}
\end{equation}

  Under natural assumptions (see \cite[Sec.\  13.6]{Wh02}) the inverse map
  is continuous with respect to $M_1$. Consider the (right continuous)
  inverses of the functions in Figure~\ref{mj} (see Figure~\ref{inv})
  and notice that in the left picture of Figure~\ref{inv} we have
  $\rho_{J_1}(x_n^{-1},x^{-1})\to 0$ (and also uniform convergence) and in the right picture only $\rho_{M_1}(x_n^{-1},x^{-1})\to 0$
  as $n\to\infty$ (but neither uniform nor $J_1$-convergence). 
\begin{figure}[h]
  \centering
\includegraphics[height=6.3cm]{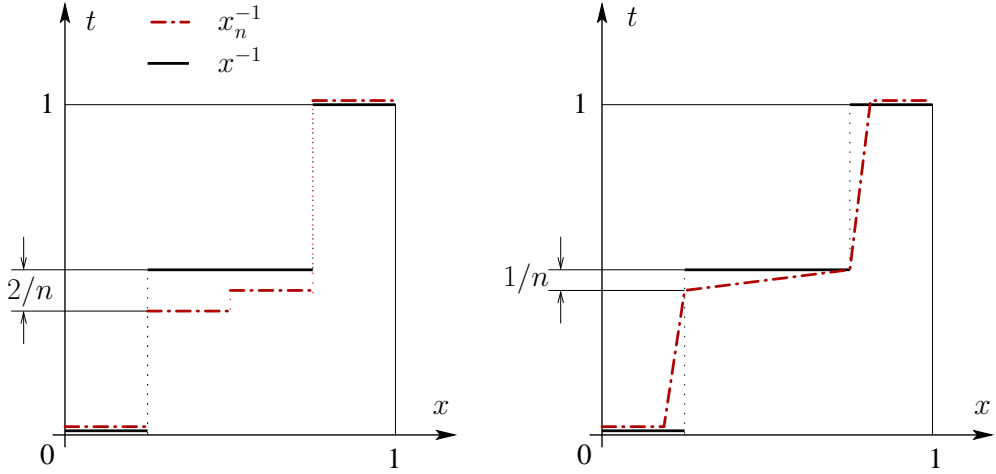}
\caption{The convergence of inverse mappings.}
\label{inv}
\end{figure}
This example
  illustrates that ``strengthening'' of convergence might or might
  not occur when taking the inverse (compare the statements of
  Theorems~\ref{1to2} and \ref{2up} for $\delta\in(2,4)$ below).

We defined the $M_1$-distance on $D[0,1]$. This definition extends
to $D[0,T]$, $T>0$, in an obvious way. For $x_1,x_2\in D[0,\infty)$ we let
$\rho_{M_1}^T(x_1,x_2)$ denote the $M_1$-distance between the
restrictions of $x_1$ and $x_2$ to the space $D[0,T]$. Then (see \cite[Sec.\
3.3, (3.6)]{Wh02})
\begin{equation*}
 \rho_{M_1}^\infty(x_1,x_2):= \int_0^\infty
e^{-T}(\rho_{M_1}^T(x_1,x_2)\wedge 1)\,dT. 
\end{equation*}

The distance $\rho_{J_1}^\infty(x_1,x_2)$ can be defined by
replacing $M_1$ with $J_1$ everywhere in the preceding paragraph. 
For further information about the space $D[0,\infty)$ with metric
$\rho^\infty_{M_1}$ the reader is referred to \cite[Sec.\ 12.9]{Wh02}.

We shall return to ERWs. Using the notation introduced in
Theorem~\ref{llth} define processes
$\eta_n=(\eta_n(t))_{t\ge 0}$ and $\xi_n=(\xi_n(t))_{t\ge 0}$ 
as follows.
\begin{alignat}{4}
&\mbox{If $\delta\in(1,2)$ then}\quad&
\eta_n(t)&:=\dfrac{T_{[nt]}}{n^{2/\delta}},\quad&
\xi_n(t)&:=\dfrac{X_{[nt]}}{n^{\delta/2}}.\tag{i}\displaybreak[2]\\
&\mbox{If $\delta=2$
  then}\quad& 
\eta_n(t)&:=\dfrac{T_{[nt]}-c^{-1}[nt]D(n)}{n},\quad&
\xi_n(t)&:=\dfrac{X_{[nt]}-c[nt]\Gamma(n)}{c^2n\log^{-2} n}.\tag{ii}\displaybreak[2]\\
&\mbox{If $\delta\in(2,4)$ then}\quad& 
 \eta_n(t)&:=\dfrac{T_{[nt]}-[nt]/v}{n^{2/\delta}},\quad&
\xi_n(t)&:=\dfrac{X_{[nt]}-[nt]v}{v^{1+2/\delta}n^{2/\delta}}.\tag{iii}\displaybreak[2]\\
&\mbox{If $\delta=4$
  then}\quad& 
\eta_n(t)&:=\dfrac{T_{[nt]}-[nt]/v}{\sqrt{n\log n}},\quad&
\xi_n(t)&:=\dfrac{X_{[nt]}-[nt]v}{v^{3/2}\sqrt{n\log n}}.\tag{iv}\displaybreak[2]\\
&\mbox{If $\delta>4$
  then}\quad& 
\eta_n(t)&:=\dfrac{T_{[nt]}-[nt]/v}{\sqrt{n}},\quad&
\xi_n(t)&:=\dfrac{X_{[nt]}-[nt]v}{v^{3/2}\sqrt{n}}.\tag{v}
\end{alignat}
Note that $\eta_n(1)$ and $\xi_n(1)$ are the random variables considered in Theorem \ref{llth}.
  Recall the random variables
$Z_{\delta/2,b}$, $b>0$, which were defined in (\ref{sl}). 
\begin{theorem}\label{1to2} {\bf ($d=1$, functional limit theorem)} Assume {\rm (IID), (BD),} and {\rm (WEL)}
and let $\delta\in(1,2)$. Then $\xi_n\overset{J_1}{\Rightarrow} \xi$ as
  $n\to\infty$ under $P_0$.  Here $\xi$ is the inverse of a stable
  subordinator $\eta$ for which $\eta(1)\overset{d}{=}Z_{\delta/2,b}$,
  and $b$ is the same as in part (i) of Theorem~\ref{llth}.
\end{theorem}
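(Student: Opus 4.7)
The plan is to lift the one-dimensional convergence $T_n/n^{2/\delta}\Rightarrow Z_{\delta/2,b}$ from Theorem \ref{llth}(i) to a functional limit theorem for the rescaled hitting-time process on $D[0,\infty)$, and then to invert. The driving tools will be the i.i.d.\ regeneration decomposition of Lemma \ref{rs} and the sharp tail asymptotics of Corollary \ref{halifax}. Since $\delta>1$, the walk is $P_0$-a.s.\ transient to $+\infty$, so the regeneration times $(\tau_k)_{k\ge 1}$ exist $P_0$-a.s., and we shall work throughout under $P_0$.

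The first step is a functional CLT for the regeneration sums. Set $\mu:=E_0[X_{\tau_2}-X_{\tau_1}\mid A_1]$, which is finite by \eqref{tailx}. By Lemma \ref{rs} the pairs $(\tau_{k+1}-\tau_k,\,X_{\tau_{k+1}}-X_{\tau_k})$, $k\ge 1$, are i.i.d., and by \eqref{tailt} the temporal increment $\tau_2-\tau_1$ lies in the domain of normal attraction of a positive $(\delta/2)$-stable law, with $\delta/2\in(1/2,1)$. The classical functional limit theorem for partial sums of nonnegative i.i.d.\ variables with regularly varying tails (see \cite{Wh02}) then yields
\[
\frac{\tau_{[ns]}}{n^{2/\delta}}\ \overset{J_1}{\Rightarrow}\ \widetilde\eta(s),
\]
where $\widetilde\eta$ is a $(\delta/2)$-stable subordinator with $\widetilde\eta(1)\stackrel{\mathrm{d}}{=}Z_{\delta/2,b'}$ for some $b'>0$. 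To pass from $\tau$ to the hitting times $T$, note that the spatial steps $X_{\tau_{k+1}}-X_{\tau_k}$ are i.i.d.\ of finite mean $\mu$, so $\nu_n:=\max\{k\ge 1\mid X_{\tau_k}\le n\}$ satisfies $\nu_{[nt]}/n\to t/\mu$ uniformly on compacts, $P_0$-a.s. Combining $\tau_{\nu_n}\le T_n\le \tau_{\nu_n+1}$ with a $J_1$-composition-continuity argument (continuous nondecreasing time change; cf.\ \cite{Wh02}) and using \eqref{tailt} to show that the overshoot $\tau_{\nu_{[nt]}+1}-\tau_{\nu_{[nt]}}$ is $o_P(n^{2/\delta})$ uniformly on compact intervals, we obtain
\[
\frac{T_{[ns]}}{n^{2/\delta}}\ \overset{J_1}{\Rightarrow}\ \widetilde\eta(s/\mu)=:\eta(s).
\]
The self-similarity relation $\widetilde\eta(s)\stackrel{\mathrm{d}}{=}s^{2/\delta}\widetilde\eta(1)$ together with \eqref{sl} identifies $\eta$ as a $(\delta/2)$-stable subordinator with $\eta(1)\stackrel{\mathrm{d}}{=}Z_{\delta/2,b}$ for $b=b'/\mu$; comparison with Theorem \ref{llth}(i) shows that this is the $b$ appearing there. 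After rescaling the time argument and using the self-similarity of $\eta$, the same convergence reads
\[
\phi_n(u):=\frac{T_{[un^{\delta/2}]}}{n}\ \overset{J_1}{\Rightarrow}\ \eta(u),
\]
which is the form best suited for inversion.

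Next we invert. Let $X^{\max}_m:=\sup_{i\le m}X_i$. Since the walk is transient to $+\infty$, $X^{\max}_m=\sup\{k\ge 0\mid T_k\le m\}$ is the right-continuous inverse of $(T_k)_{k\ge 0}$, and hence
\[
\phi_n^{-1}(t):=\inf\{u\ge 0\mid \phi_n(u)>t\}=\frac{X^{\max}_{[nt]}+1}{n^{\delta/2}}.
\]
Every $(\delta/2)$-stable subordinator with $\delta/2\in(0,1)$ is $P_0$-a.s.\ strictly increasing, so the inversion map is continuous at $\eta$ in the $M_1$-topology (\cite{Wh02}), giving $\phi_n^{-1}\Rightarrow \eta^{-1}$ in $M_1$. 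Because $\eta^{-1}$ is continuous (flat on the intervals spanned by the jumps of $\eta$ but without jumps of its own), the criterion \eqref{imp} upgrades this to $J_1$-convergence. It remains to replace $X^{\max}$ by $X$: between consecutive regenerations the deviation $X^{\max}_m-X_m$ is bounded by $X_{\tau_{\nu_m+1}}-X_{\tau_{\nu_m}}$, so $\sup_{m\le nT}(X^{\max}_m-X_m)$ is dominated by the maximum of $O(n^{\delta/2})$ i.i.d.\ spatial regeneration steps with tail index $\delta>1$ by \eqref{tailx}. This maximum is $O_P(n^{1/2})=o(n^{\delta/2})$ precisely because $\delta>1$, so $X_{[n\cdot]}$ and $X^{\max}_{[n\cdot]}$ share the same $J_1$-limit and $\xi_n\overset{J_1}{\Rightarrow}\xi$ as claimed.

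The principal difficulty is the inversion step together with its topological subtleties: one must pass from $J_1$-convergence of $\phi_n$ through the right-continuous inverse to a limit whose trajectories have a dense set of jumps, while simultaneously controlling the running-maximum-minus-position gap on the precise scale $n^{\delta/2}$. Both ingredients hinge on the critical separation between the spatial tail index $\delta$ and the temporal tail index $\delta/2$ furnished by Corollary \ref{halifax}.
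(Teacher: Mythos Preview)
Your overall architecture---functional limit theorem for the regeneration-time partial sums, random time change to the hitting-time process, inversion, upgrade to $J_1$ via continuity of $\eta^{-1}$, and finally replacement of the running maximum by $X$---is the same as the paper's. The last two steps are carried out essentially verbatim, and your final estimate $\max_{k\le Cn^{\delta/2}}(X_{\tau_{k+1}}-X_{\tau_k})=O_P(n^{1/2})=o_P(n^{\delta/2})$ is correct and matches the paper's bound.

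There is, however, a genuine gap in the passage from $\tau_{[ns]}/n^{2/\delta}$ to $T_{[ns]}/n^{2/\delta}$. Your claim that the overshoot $\tau_{\nu_{[nt]}+1}-\tau_{\nu_{[nt]}}$ is $o_P(n^{2/\delta})$ \emph{uniformly} on compact $t$-intervals is false. As $t$ ranges over $[0,T]$ the index $\nu_{[nt]}$ sweeps through all integers up to $\nu_{[nT]}\sim nT/\mu$, so the uniform overshoot equals $\max_{k\le Cn}(\tau_{k+1}-\tau_k)$. By \eqref{tailt} these i.i.d.\ increments have tail index $\delta/2<1$, whence
\[
P_0\Big[\max_{k\le Cn}(\tau_{k+1}-\tau_k)>\varepsilon n^{2/\delta}\Big]\;\longrightarrow\;1-\exp\!\big(-CC_3\varepsilon^{-\delta/2}\big)\;>\;0,
\]
so the maximum is genuinely of order $n^{2/\delta}$, not $o_P(n^{2/\delta})$. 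The sup-norm sandwich therefore fails, and you have not established $J_1$-convergence of $T_{[n\cdot]}/n^{2/\delta}$.

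The paper handles exactly this obstacle by invoking Lindberger's Theorem~\ref{lin}: with $\zeta_i=X_{\tau_{i+1}}-X_{\tau_i}$ and $w(t)=T_{[t]+X_{\tau_1}}-\tau_1$ one has $M^{(2)}_n(w)\equiv 0$ by monotonicity of $w$, so Theorem~\ref{lin} delivers $M_1$-convergence of $T_{[n\cdot]+X_{\tau_1}}/n^{2/\delta}$ directly (the paper never claims $J_1$ here; cf.\ Remark~\ref{t1to2}). Your argument can be repaired in the same spirit: the composition step correctly gives $\tau_{\nu_{[n\cdot]}}/n^{2/\delta}\Rightarrow\eta$ and $\tau_{\nu_{[n\cdot]}+1}/n^{2/\delta}\Rightarrow\eta$ in $J_1$, hence in $M_1$; since all three processes and the limit are nondecreasing, the monotone $M_1$-sandwich (pointwise convergence at the a.s.\ dense set of continuity points of $\eta$) yields $T_{[n\cdot]}/n^{2/\delta}\Rightarrow\eta$ in $M_1$. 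From there your inversion step already only uses $M_1$-continuity of the inverse map, and the upgrade to $J_1$ via \eqref{imp} and continuity of $\eta^{-1}$ goes through unchanged.
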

We denote by $\overset{M_1}{\Rightarrow} $ the weak convergence on
$D[0,\infty)$ in $M_1$.
\begin{theorem}
  \label{2up} {\bf ($d=1$, functional limit theorem)} Assume {\rm (IID), (BD),} and {\rm (WEL)}.
If $\delta\in(2,4)$ then
  $\xi_n\overset{M_1}{\Rightarrow} \xi$ as $n\to\infty$ under $P_0$. Here $\xi$
  is a stable process with index $\delta/2$ such that
  $\xi(1)\overset{d}{=} -Z_{\delta/2,b}$. 
  If $\delta\ge 4$ then $\xi_n\overset{J_1}{\Rightarrow} \sqrt{2b}\,B
  \ \text{ as }n\to\infty$. The constant $b$ is the same as in the
  corresponding part of Theorem~\ref{llth}.
\end{theorem}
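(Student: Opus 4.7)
My plan is to combine the regeneration structure of Lemma~\ref{rs} with a functional limit theorem for partial sums in the stable domain of attraction, and then transfer the resulting convergence from the hitting time process to the walk via an inversion argument.

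First, I would decompose $T_{[nt]}$ using the i.i.d.\ regeneration blocks. By Lemma~\ref{rs} with $\ell=1$, the vectors $(X_n - X_{\tau_k})_{\tau_k \le n \le \tau_{k+1}}$ for $k \ge 1$ are i.i.d.\ under $P_0[\,\cdot\mid A_1]$, and Corollary~\ref{halifax} identifies the tails $P_0(\tau_{k+1}-\tau_k > n) \sim C_3\, n^{-\delta/2}$ and $P_0(X_{\tau_{k+1}}-X_{\tau_k} > n) \sim C_1\, n^{-\delta}$. Since $\delta > 2$, both block lengths have finite mean, the speed is $v = E_0[X_{\tau_2}-X_{\tau_1}\mid A_1]/E_0[\tau_2-\tau_1\mid A_1]$ by (\ref{rudi}); the variance of $X_{\tau_{k+1}}-X_{\tau_k}$ is always finite, while that of $\tau_{k+1}-\tau_k$ is finite iff $\delta > 4$. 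The pre-regeneration prefix contributes only $O(1)$ to $T_{[nt]}$ and is negligible on the scales below.

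Second, I would invoke the functional limit theorem of \cite{Li78} for centered partial sums in the stable domain of attraction, applied to the i.i.d.\ sequence $(\tau_{k+1}-\tau_k)_{k \ge 1}$. For $\delta \in (2,4)$ the one-sided tail behavior places these increments in the domain of attraction of a totally skewed strictly $(\delta/2)$-stable law with normalization $n^{2/\delta}$; this yields $\eta_n \overset{J_1}{\Rightarrow} \eta$, where $\eta$ is the corresponding stable process with $\eta(1) \overset{\mathrm{d}}{=} Z_{\delta/2,b}$ for the $b$ of Theorem~\ref{llth}(iii). For $\delta=4$ a standard truncation handles the borderline normalization $\sqrt{n\log n}$ and produces Brownian motion, and for $\delta>4$ classical Donsker with normalization $\sqrt n$ suffices. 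Interpolating linearly between the partial sums at the times $\tau_k$ and absorbing the $o(a_n)$ within-block fluctuations then gives the functional limit theorem for $\eta_n$ as defined in the statement.

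Third, I would invert. Set $S_n := \sup_{k \le n} X_k$ and $W_n(t) := T_{[nt]}/n$; transience of $X$ to $+\infty$ makes $W_n$ nondecreasing, and $W_n(t) \to t/v$ uniformly on compacts with centered-scaled fluctuation $n(W_n(t)-t/v)/a_n = \eta_n(t)$, where $a_n$ is the normalization appropriate to the regime. The right-continuous inverse of $W_n$ is essentially $S_{[ns]}/n$. Whitt's inverse-map theorem in the $M_1$ topology \cite[Sec.\ 13.6]{Wh02} applies because the limit $t \mapsto t/v$ is strictly increasing and continuous, and combined with the first-order expansion $W_n^{-1}(s) = vs - v\, a_n^{-1}\eta_n(vs) + o(a_n^{-1})$ it yields
\[
\frac{S_{[nt]}-[nt]v}{a_n}\ \overset{M_1}{\Rightarrow}\ -v\,\eta(v\,\cdot).
\]
By self-similarity of $\eta$ with index $\delta/2$, $\eta(v\,\cdot) \overset{\mathrm{d}}{=} v^{2/\delta}\eta(\cdot)$, so dividing further by $v^{1+2/\delta}$ makes the right-hand side agree in law with $-\eta$, matching the desired limit $\xi$.

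Finally, I would pass from $S_{[nt]}$ to $X_{[nt]}$ and upgrade the topology when $\delta \ge 4$. The overshoot $S_n - X_n$ is dominated by the maximum backward displacement $\max_{\tau_k \le j \le \tau_{k+1}}(X_{\tau_{k+1}} - X_j)$ inside the active regeneration block, whose tails are no heavier than those of $X_{\tau_{k+1}}-X_{\tau_k}$, i.e.\ $n^{-\delta}$; hence $\sup_{k \le n}(S_k - X_k) = o_P(a_n)$ in all three regimes, so $\xi_n$ inherits the $M_1$ (resp.\ $J_1$) limit from the corresponding functional of $S_{[nt]}$. When $\delta \ge 4$ the limit $\sqrt{2b}\,B$ is continuous, and the criterion (\ref{imp}) then upgrades the $M_1$ convergence automatically to $J_1$. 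The principal obstacle is the inversion step: for $\delta \in (2,4)$ the limit $\eta$ has one-sided (upward) jumps, whose images under inversion are vertical flat stretches in $W_n^{-1}$, which forces the $M_1$ topology and genuinely excludes $J_1$. Verifying the hypotheses of Whitt's inversion theorem in this setting, and cleanly coupling the i.i.d.\ partial-sum approximation to the interpolated process on which the FCLT is applied, is the main technical burden of the argument.
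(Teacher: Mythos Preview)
Your route differs from the paper's in a key way. The paper applies Lindberger's theorem (Theorem~\ref{lin}) \emph{directly} to the centered walk: it takes the clock to be the \emph{temporal} regeneration gaps $\zeta_i=\tau_{i+1}-\tau_i$ and the cumulative process to be $w(t)=X_{[t]+\tau_1}-v[t]-X_{\tau_1}$. Convergence of $w(s_{[n\cdot]})/a(n)$ at regeneration times follows from the standard $J_1$ FCLT for heavy-tailed i.i.d.\ sums, and the transfer to continuous time is obtained by verifying Lindberger's $M^{(2)}$ condition via two short lemmas showing $M^{(2)}_n(w)\le 4(X_{\tau_{n+1}}-X_{\tau_n})$ and $\max_{m\le n}(X_{\tau_{m+1}}-X_{\tau_m})/a(n)\to 0$ in probability. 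This yields $\xi_n\overset{M_1}{\Rightarrow}\xi$ in one stroke, with no inversion.

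Your plan instead mirrors the paper's proof of Theorem~\ref{1to2}: establish a FCLT for $\eta_n$ first, then invert. This can be made to work, but two of your intermediate claims need repair. First, the ``within-block fluctuations'' of $T_{[nt]}$ are \emph{not} $o(a_n)$ --- a single block contributes $\tau_{k+1}-\tau_k$, which is $O(a_n)$ with non-vanishing probability; what actually makes your Step~2 go through is again Lindberger (now with the spatial clock $\zeta_i=X_{\tau_{i+1}}-X_{\tau_i}$), and the $M^{(2)}$ verification yields only $M_1$ convergence of $\eta_n$, not the $J_1$ you assert (cf.\ Remark~\ref{t1to2} and the paper's remark on convergence of $\eta_n$). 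Second, the inversion you need is the \emph{centered} inverse map of \cite[Sec.~13.7]{Wh02}, not the uncentered one in Sec.~13.6 that you cite; its hypotheses are met here, so you do recover the $M_1$ limit for $S_{[n\cdot]}$ and then for $X_{[n\cdot]}$. The trade-off: your route is conceptually modular (hitting times, then inversion) but costs two applications of non-trivial machinery, whereas the paper's single direct application of Lindberger to the walk itself is shorter and sidesteps the centered-inversion issue entirely.
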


\begin{remark}
{\rm {\bf (case $\delta=2$)}
  The case $\delta=2$ should have been included in
  Theorem~\ref{2up}. However, the results of \cite{Li78} do not
  cover this case, and a separate proof is needed.
}
\end{remark}

\begin{remark} {\rm {\bf (no $J_1$-convergence)} In the first part of Theorem~\ref{2up}
    ($\delta\in(2,4)$) the convergence in $M_1$ can not be replaced
    with convergence in $J_1$. The intuitive explanation is that
    the jumps of the limiting process $\xi$ are unmatched by those of
    the approximating processes $\xi_n$ since the jumps of $\xi_n$
    are bounded by $2/n^{2/\delta}$. This phenomenon was illustrated
    by examples in Figure~\ref{mj} except that the jumps in the
    limiting process $\xi$ are due to centering and, therefore, are
    negative. See also \cite[Section 3.3 and Chapter 6]{Wh02}. 
  }
\end{remark}

\begin{remark}
{\rm {\bf (convergence of $\eta_n$)}
 The weak convergence in $M_1$ under $P_0$ of the processes $\eta_n$
  to the stable process $\eta$ which is characterized by the
  corresponding one-dimensional distribution of Theorem~\ref{llth} can
  also be established (see \cite[Th.\ 2(i) with $g(t)\equiv 1,\ q=1,
  f(t)=h(t)=t$]{Li78} for $\delta>2$ and Remark~\ref{t1to2} below for
  $\delta\in(1,2)$), but, since our main interest is in $\xi_n$, we
  omit the corresponding results.  }
\end{remark}

For the proofs of Theorems \ref{1to2} and \ref{2up} we shall need the following result from \cite{Li78}.

\begin{theorem}\label{lin}
{\bf (\cite[Th.\ 1, Rem.\  2]{Li78})} 
Let $(\zeta_i)_{i\in\N}$ be i.i.d.\ random variables such that
$P[\zeta_1\le 0]=0$, and set $s_0:=0,\ s_n:=\sum_{i=1}^n\zeta_i$ for
$n\in\N$.  Suppose that $\mu:=E[\zeta_1]<\infty$ and the
$D[0,\infty)$-valued process $w=(w(t))_{t\ge 0}$ satisfies: $w(0)=0$
and
\begin{equation}\label{C1}
\begin{array}{l}
  (\zeta_n,w(s_{n-1}+\cdot\zeta_n)-w(s_{n-1})), \ n\ge 1,\ \text{are i.i.d.}\\
  \text{random variables with values in }[0,\infty)\times D[0,1],
\end{array}
\end{equation}
and for some norming constants $a(n)$, $n\in
\N$, \[\dfrac{w(s_{[n\cdot]}) }{a(n)}\overset{M_1}{\Rightarrow}
Y(\cdot)\qquad\mbox{as $n\to\infty$},\] where $Y$ is a stable process
with index $\alpha$.  Moreover, define
\begin{equation*}
 M_n^{(2)}(w):=\Bigg(\sup\limits_{0=t_0<t_1<t_2<t_3=1}\sum_{i=1}^3|w(s_{n-1}+t_i\zeta_n)-w(s_{n-1}+t_{i-1}\zeta_n)|\Bigg)-
  |w(s_n)-w(s_{n-1})|.
\end{equation*}
Then \[ \dfrac{w(n\cdot)}{\mu^{-1/\alpha}a(n)}\overset{M_1}{\Rightarrow}
Y(\cdot)
\quad\text{if and only if}\quad
\dfrac{M_{[n\cdot]}^{(2)}(w)}{a(n)}\overset{M_1}{\Rightarrow}0.\]
\end{theorem}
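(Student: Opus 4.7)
The approach is a random time-change plus converging-together argument, relying on the characterization of $M_1$-convergence by monotonicity-failure moduli. By the strong law of large numbers, $\phi_n(t):=s_{[nt]}/n\to\mu t$ uniformly on compacts almost surely, and by the $1/\alpha$-self-similarity of the stable limit, $Y(\cdot/\mu)\stackrel{d}{=}\mu^{-1/\alpha}Y(\cdot)$ as $D[0,\infty)$-valued processes. Writing $\tilde w_n(t):=w(s_{[nt]})/a(n)$ and $w_n(t):=w(nt)/a(n)$, the target conclusion $w_n\overset{M_1}{\Rightarrow}\mu^{-1/\alpha}Y$ is equivalent to $w_n(\mu\cdot)\overset{M_1}{\Rightarrow}Y$ (since a deterministic time scaling is $M_1$-continuous). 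Because $\tilde w_n\overset{M_1}{\Rightarrow}Y$ by hypothesis, the converging-together principle in $(D[0,\infty),\rho^\infty_{M_1})$ reduces the whole statement to
\[
\rho_{M_1}^T\bigl(w_n(\mu\cdot),\tilde w_n\bigr)\to 0\ \text{in probability for every }T>0\ \iff\ M_{[n\cdot]}^{(2)}(w)/a(n)\overset{M_1}{\Rightarrow}0.
\]

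For the ``$\Leftarrow$'' direction I would argue cycle by cycle. For fixed $t\le T$, set $\kappa=\kappa_n(t):=[nt]$; both $n\mu t$ and $s_\kappa$ lie inside (or within $o_P(n)$ of) the renewal interval $I_\kappa=[s_\kappa,s_{\kappa+1}]$, by the uniform convergence of $\phi_n$. The key deterministic observation is that within a single cycle the $M_1$-distance between $w|_{I_\kappa}$ and its endpoint chord is controlled, up to a universal constant, by $M_{\kappa+1}^{(2)}(w)$: the three-piece supremum in the definition of $M_n^{(2)}$ captures exactly the non-monotone excursion of $w$ over $I_\kappa$ sitting beyond the jump $|w(s_{\kappa+1})-w(s_\kappa)|$, and this is precisely what the $M_1$-metric detects once one is allowed a monotone re-parametrization of the completed graph. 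Summing the single-cycle bounds, with uniformity in $t\le T$ provided by the i.i.d.\ cycle structure \eqref{C1}, yields
\[
\rho_{M_1}^T\bigl(w_n(\mu\cdot),\tilde w_n\bigr)\ \le\ C\sup_{k\le [nT]+1}\frac{M_k^{(2)}(w)}{a(n)}+o_P(1),
\]
and the right-hand side vanishes in probability by hypothesis.

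For the ``$\Rightarrow$'' direction I would argue by contradiction. Suppose $\rho_{M_1}^T\bigl(w_n(\mu\cdot),\tilde w_n\bigr)\to 0$ in probability while, along some subsequence, $\sup_{k\le[nT]}M_k^{(2)}(w)/a(n)\not\to 0$ in probability. The i.i.d.\ cycle structure \eqref{C1} then forces a positive density of cycles containing a genuine non-monotone excursion of size at least $\varepsilon a(n)$ for some $\varepsilon>0$. Such excursions appear in $w_n(\mu\cdot)$ but, by construction, not in $\tilde w_n$, and because the $M_1$-metric \emph{does} distinguish a chord from a chord augmented by a non-monotone excursion exceeding it, this contradicts the assumed vanishing of the $M_1$-distance. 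A standard tightness-plus-uniqueness argument in $(D[0,\infty),\rho_{M_1}^\infty)$ makes this contradiction quantitative.

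The main obstacle is the deterministic single-cycle comparison underlying the ``$\Leftarrow$'' direction: one must identify $M_n^{(2)}$, up to a universal constant, with the $M_1$-distance from a path to its chord on a single cycle, i.e.\ match the 3-chord functional $M_n^{(2)}$ to Whitt's $w''$-modulus for $M_1$ rather than the usual $w'$-modulus of $J_1$. This is the content of the deterministic lemmas in \cite{Li78} and is what makes the theorem a clean if-and-only-if; once it is in hand, the renewal time change, self-similarity, and converging-together steps are comparatively standard.
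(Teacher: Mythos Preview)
The paper does not prove this theorem: it is quoted verbatim from \cite[Th.\ 1, Rem.\ 2]{Li78} and used as a black box in the proofs of Theorems~\ref{1to2} and~\ref{2up}. So there is no ``paper's own proof'' to compare against.

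That said, your sketch is a plausible outline of Lindberger's argument. The reduction via the strong law $s_{[nt]}/n\to\mu t$, self-similarity of the stable limit, and the converging-together theorem is standard and correct. You are also right that the crux is the deterministic identification of $M_n^{(2)}$ with (a constant multiple of) the $M_1$-oscillation modulus over a single cycle: the three-point partition in the definition of $M_n^{(2)}$ is precisely Skorokhod's $M$-oscillation functional, which measures the failure of a path to lie between its endpoint values. This is indeed what \cite{Li78} establishes. Your ``$\Rightarrow$'' direction is the weaker part of the sketch: the claim that a non-vanishing $\sup_{k\le[nT]}M_k^{(2)}(w)/a(n)$ forces a ``positive density'' of bad cycles is not justified---a single cycle with a large non-monotone excursion already suffices to spoil $M_1$-closeness, and the i.i.d.\ structure is used rather to get tightness/uniformity, not density. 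But since the paper only \emph{cites} this result, a sketch at this level is all one could reasonably expect here.
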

\begin{proof}[Proof of Theorem~\ref{1to2}]
  By (\ref{r2}) of Theorem \ref{rt} the ERW is transient to the right.
  Therefore, the regeneration structure described in Lemma \ref{rs}
  exists and we may set $\zeta_i=X_{\tau_{i+1}}-X_{\tau_{i}}$ for
  $i\in\N$ and $w(t)=T_{[t]+X_{\tau_1}}-\tau_1$ for $t\in [0,\infty)$
  and have that $(\zeta_i)_{i\ge 1}$ is i.i.d.\ and
  $\mu=E_0[\zeta_i]<\infty$. Moreover, (\ref{C1}) is
  satisfied. Therefore, by (\ref{tailx}) and \cite[Th.\
  4.5.3]{Wh02}, \[\frac{w(s_{[n\cdot]})
  }{n^{2/\delta}}=\frac{\tau_{[n\cdot]+1}-\tau_1}{n^{2/\delta}}\overset{J_1}
  {\Rightarrow} Y(\cdot),\] where $Y$ is a stable subordinator.
  Moreover, $M_{[nt]}^{(2)}(w)\equiv 0$, since $w(t)$ is monotone. By
  Theorem~\ref{lin} we conclude that
  $(n/\mu)^{-2/\delta}(T_{[n\cdot]+X_{\tau_1}}-\tau_1)\overset{M_1}{\Rightarrow}
  Y(\cdot)$ as $n\to\infty$. Since $\tau_1$ is $P_0$-a.s.\ finite, the
  convergence-together theorem \cite[Th.\ 11.4.7]{Wh02} implies that
  \begin{equation}\label{etm}
    \frac{T_{[n\cdot]+X_{\tau_1}}}{(n/\mu)^{2/\delta}}
    \overset{M_1}{\Rightarrow}Y(\cdot).
  \end{equation}
  Taking the inverses (see \cite[Cor.\ 13.6.3]{Wh02}), we get
  $n^{-\delta/2}(S_{[n\cdot]}-X_{\tau_1})\overset{M_1}{\Rightarrow}\mu
  Y^{-1}(\cdot)$, where $S_n=\max_{0\le j\le n}X_j$. Since $\tau_1$ is
  $P_0$-a.s.\ finite, we conclude that
  $n^{-\delta/2}S_{[n\cdot]}\overset{M_1}{\Rightarrow}\mu
  Y^{-1}(\cdot)$. Next we note that $Y^{-1}$ has continuous
  trajectories (\cite[Lem.\ III.17]{Ber96}). This allows us to strengthen the
  convergence to $J_1$ (see (\ref{imp})), i.e.\
  $n^{-\delta/2}S_{[n\cdot]}\overset{J_1}{\Rightarrow}\mu
  Y^{-1}$. Part (i) of Theorem~\ref{llth} identifies the
  one-dimensional distributions of the limiting processes.

  Finally, we would like to replace the maximal process $S$ with the
  original walk $X$. It is enough to consider restrictions of both
  processes to intervals $[0,T]$, $T>0$, (\cite[Th.\ 16.7]{Bil99}). To
  simplify notation, we consider without loss of generality the time
  interval $[0,1]$ instead of $[0,T]$. The tail estimate
  (\ref{tailt}) for the variables of the i.i.d.\ sequence
  $(\tau_{i+1}-\tau_i),\ i\ge 1$, implies that for every $\nu>0$ there
  is $K>0$ such that $P_0[\tau_{[Kn^{\delta/2}]}\le n]<\nu$ for all
  $n\in\N$. For every $\epsilon>0$ we have
  \begin{align*}
\lefteqn{P_0\left[\max_{0\le t \le 1}(S_{[nt]}-X_{[nt]})>\epsilon
      n^{\delta/2}\right]\le \nu+ P_0\left[\max_{0\le m \le
        n}(S_{m}-X_{m})>\epsilon n^{\delta/2},\
      \tau_{[Kn^{\delta/2}]}> n\right]}\hspace*{30mm}\displaybreak[3]\\
&\le\nu+P_0\left[\max_{1\le i\le
        [Kn^{\delta/2}]}(X_{\tau_i}-X_{\tau_{i-1}})>\epsilon
      n^{\delta/2}\right]\displaybreak[3]\\
&\le \nu+P_0[X_{\tau_1}>\epsilon
    n^{\delta/2}]+P_0\left[\max_{2\le i\le
        [Kn^{\delta/2}]}(X_{\tau_i}-X_{\tau_{i-1}})>\epsilon
      n^{\delta/2}\right]\displaybreak[3]\\
&\stackrel{(\ref{tailx})}{\le} 2\nu+1-\left(1-\frac{2C_1}{(\epsilon
        n^{\delta/2})^\delta}\right)^{Kn^{\delta/2}}<3\nu
  \end{align*}
  for all sufficiently large $n$. Now apply the convergence-together theorem.
\end{proof}

\begin{remark}\label{t1to2}{\rm {\bf (convergence of $\eta_n$ for
      $\delta\in(1,2)$)} From (\ref{etm}) it is only one step from the
    conclusion that $\eta_n\overset{M_1}{\Rightarrow}\eta$, where
    $\eta$ is a stable subordinator with
    $\eta(1)\overset{d}{=}Z_{\delta/2,b}$. Indeed, a straightforward
    computation shows that for every $T>0$ the $J_1$- (and, therefore,
    $M_1$-) distance between $n^{-2/\delta}T_{[n\cdot]+X_{\tau_1}}$
    and $n^{-2/\delta}T_{[n\cdot]}$ on $D[0,T]$ converges to $0$ in
    $P_0$-probability as $n\to\infty$. The claim follows by the
    convergence-together theorem and part (i) of Theorem~\ref{llth}. }
\end{remark}
\begin{proof}[Proof of Theorem~\ref{2up}] The statement of
  Theorem~\ref{2up} for $\delta>4$ is contained in \cite[Th.\
  3]{KZ08}. Thus, we assume that $\delta\in(2,4]$ (even though the
  proof works for all $\delta>2$). Set
  $\zeta_i=\tau_{i+1}-\tau_i$, $i\ge 1$, and
  $w(t)=X_{[t]+\tau_1}-v[t]-X_{\tau_1}$, $t\ge 0$. By Lemma \ref{rs}
  $(\zeta_i)_{i\ge 1}$ is i.i.d.\ and (\ref{C1}) holds. Moreover, $\mu=E_0[\zeta_i]<\infty$ by
  (\ref{tailt}). 

    By Theorem \ref{lln2}, $E_0[X_{\tau_2}-X_{\tau_1}]=\mu v$. Let $a(n):=n^{2/\delta}$
  if $\delta\in(2,4)$ and $a(n):=\sqrt{n\log n}$ if $\delta=4$. We shall show
    the weak convergence in $J_1$ of 
\begin{equation}\label{mag}
\frac{w(s_{[n\cdot]})
    }{a(n)}=\frac{X_{\tau_{[n\cdot]+1}}-v(\tau_{[n\cdot]+1}-\tau_1)-X_{\tau_1}}
    {a(n)}.
\end{equation}
 Indeed, by (\ref{tailx}), (\ref{tailt}), \cite[Th.\
    4.5.3]{Wh02}, and \cite[Ch.\ 9, Sec.\ 6, Th.\ 2]{GS69}
    \begin{equation*}
      \frac{X_{\tau_{[n\cdot]+1}}-X_{\tau_1}-\mu v[n\cdot]}
      {a(n)}\overset{J_1}{\Rightarrow}0\quad \text{and}\quad 
      \frac{\tau_{[n\cdot]+1}-\tau_1-\mu[n\cdot]}
      {a(n)}\overset{J_1}{\Rightarrow}\tilde{Y}(\cdot),
    \end{equation*}
    where $\tilde{Y}$ is a stable process with index $\delta/2$. The
    convergence-together theorem implies
    that the process in (\ref{mag}) converges in $J_1$ to $Y:=-v\tilde{Y}$.
    Finally, we need to check the condition
    $(a(n))^{-1}M^{(2)}_{[n\cdot]}(w)\overset{M_1}{\Rightarrow}0$. 
    The
    following two simple lemmas accomplish the task. (We state these
    lemmas for all $\delta>2$ setting
    $a(n)=\sqrt{n}$ when $\delta>4$.)
    \begin{lemma}\label{M} Let $\delta>2$. Then, under the conditions stated above,
      \[M^{(2)}_n(w)\le 4(X_{\tau_{n+1}}-X_{\tau_n})\quad\text{for all $n\ge 1$}.\]
    \end{lemma}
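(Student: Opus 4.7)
The plan is to decompose $w(s_{n-1}+t\zeta_n)$ additively into an ``$X$-part'' that the regeneration structure confines to a short interval and a monotone ``drift part,'' and then estimate the three-point sup term by term. Concretely, since $s_{n-1}=\tau_n-\tau_1$ and $s_n=\tau_{n+1}-\tau_1$, one writes
\[
w(s_{n-1}+t\zeta_n) \;=\; \phi(t)+\psi(t),
\]
where $\phi(t):=X_{[s_{n-1}+t\zeta_n]+\tau_1}-X_{\tau_1}$ and $\psi(t):=-v\,[s_{n-1}+t\zeta_n]$. Introduce the three-point sup $V_3(f):=\sup_{0=t_0<t_1<t_2<t_3=1}\sum_{i=1}^3|f(t_i)-f(t_{i-1})|$, which is subadditive by the term-by-term triangle inequality, so that $V_3(\phi+\psi)\le V_3(\phi)+V_3(\psi)$.

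The main observation comes from Lemma \ref{rs}: inside the regeneration excursion $[\tau_n,\tau_{n+1}]$ the walk never exits $[X_{\tau_n},X_{\tau_{n+1}}]$. Hence $\phi$ takes all of its values in an interval of length $\Delta:=X_{\tau_{n+1}}-X_{\tau_n}$, so each of the three absolute differences in the defining sum for $V_3(\phi)$ is at most $\Delta$, giving $V_3(\phi)\le 3\Delta$. On the other hand, since $\delta>2$ implies $v>0$ by Theorem \ref{bal}, the step function $\psi$ is non-increasing, and the three-point sum telescopes to $V_3(\psi)=\psi(0)-\psi(1)=v\zeta_n$.

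Combining the two estimates by subadditivity yields $V_3\bigl(w(s_{n-1}+\cdot\,\zeta_n)\bigr)\le 3\Delta+v\zeta_n$. Since $w(s_n)-w(s_{n-1})=\Delta-v\zeta_n$, subtracting and splitting cases on the sign of $\Delta-v\zeta_n$ gives
\[
M_n^{(2)}(w) \;\le\; 3\Delta+v\zeta_n-|\Delta-v\zeta_n| \;\le\; 4\Delta,
\]
which is the stated bound. There is no genuine obstacle here; the whole content is the single conceptual point that any ``non-monotonicity'' of $w$ on the interval $[s_{n-1},s_n]$ must come from $\phi$, and the regeneration structure traps $\phi$ inside an interval of length $\Delta$.
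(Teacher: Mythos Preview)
Your proof is correct and follows essentially the same route as the paper: the decomposition $\phi+\psi$ is exactly the paper's $w_1-w_2$ (restricted to $[s_{n-1},s_n]$ and shifted by a constant), and the two key inputs---monotonicity of the drift part so its three increments telescope, and the regeneration structure confining the walk part to an interval of length $\Delta$---are identical. The only cosmetic difference is that the paper applies the reverse triangle inequality to $|w(s_n)-w(s_{n-1})|$, which makes the $w_2$-contributions cancel immediately and yields $\sum_{i=1}^3|w_1(\cdot)|+|w_1(s_n)-w_1(s_{n-1})|\le 4\Delta$ without your case split on the sign of $\Delta-v\zeta_n$.
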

    \begin{proof}
 Let $w(t)=w_1(t)-w_2(t)$, where
      $w_1(t):=X_{[t]+\tau_1}$ and $w_2(t):=v[t]+X_{\tau_1}$. Notice
      that the latter process is increasing. Then for $n\in\N$ and
      $0=t_0<t_1<t_2<t_3=1$,
    \begin{eqnarray*}
      \lefteqn{\left(\sum_{i=1}^3|w(s_{n-1}+t_i\zeta_n)-w(s_{n-1}+t_{i-1}\zeta_n)|\right)-
      |w(s_n)-w(s_{n-1})|}\\
    &\le&
      \left(\sum_{i=1}^3|w_1(s_{n-1}+t_i\zeta_n)-w_1(s_{n-1}+t_{i-1}\zeta_n)|
        +w_2(s_{n-1}+t_i\zeta_n)-w_2(s_{n-1}+t_{i-1}\zeta_n)\right)\\
    &&\mbox{}-
      \left((w_2(s_n)-w_2(s_{n-1})- |w_1(s_n)-w_1(s_{n-1})|\right) \ \le\
      4|w_1(s_n)-w_1(s_{n-1})|.
    \end{eqnarray*}
This implies the desired estimate.
    \end{proof}
    \begin{lemma}
      \label{2} Let $\delta>2$. Then, under the conditions stated above,
 \[\max_{1\le m\le
        n}\frac{X_{\tau_{m+1}}-X_{\tau_m}}{a(n)}\to 0\ \ \text{ as $n\to\infty$ in
        $P_0$-probability.}\]
    \end{lemma}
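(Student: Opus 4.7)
The plan is to use the regeneration structure together with the tail estimate (\ref{tailx}) via a simple union bound. By Lemma~\ref{rs}, the increments $X_{\tau_{m+1}}-X_{\tau_m},\ m\ge 1,$ are i.i.d.\ under $P_0$, and their common tail satisfies $P_0[X_{\tau_2}-X_{\tau_1}>t]\sim C_1 t^{-\delta}$ as $t\to\infty$ by Corollary~\ref{halifax}. Since the maximum of $n$ i.i.d.\ random variables with a tail of regular variation index $-\delta$ is of order $n^{1/\delta}$, and in each of the three relevant regimes $a(n)$ grows strictly faster than $n^{1/\delta}$ (namely $n^{2/\delta}\gg n^{1/\delta}$ when $\delta\in(2,4)$; $\sqrt{n\log n}\gg n^{1/4}$ when $\delta=4$; and $\sqrt n\gg n^{1/\delta}$ when $\delta>4$), the claimed convergence to $0$ in probability should follow immediately.

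Concretely, I would fix $\epsilon>0$ and estimate, for all $n$ large enough,
\[
P_0\!\left[\max_{1\le m\le n}\frac{X_{\tau_{m+1}}-X_{\tau_m}}{a(n)}>\epsilon\right]
\ \le\ n\,P_0\!\left[X_{\tau_2}-X_{\tau_1}>\epsilon\, a(n)\right]
\ \le\ \frac{2 C_1}{\epsilon^\delta}\cdot\frac{n}{a(n)^\delta},
\]
where in the first inequality I use that the $X_{\tau_{m+1}}-X_{\tau_m}$ are identically distributed (with the same law as $X_{\tau_2}-X_{\tau_1}$, the first increment being handled by a separate single term of the same bound or by Lemma~\ref{rs}), and in the second inequality I invoke (\ref{tailx}).

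It then remains to verify that $n/a(n)^\delta\to 0$ in each regime. For $\delta\in(2,4)$, $a(n)^\delta=n^2$, giving $n/a(n)^\delta=1/n\to 0$. For $\delta=4$, $a(n)^\delta=(n\log n)^2$, giving $1/(n\log^2 n)\to 0$. For $\delta>4$, $a(n)^\delta=n^{\delta/2}$, giving $n^{1-\delta/2}\to 0$ since $\delta>2$. This completes the plan; no single step is expected to be a genuine obstacle, the whole argument being a direct application of the Potter-type tail bound (\ref{tailx}) to a union bound over $n$ i.i.d.\ regeneration increments.
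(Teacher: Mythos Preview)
Your argument is correct and essentially identical to the paper's: both use the i.i.d.\ regeneration increments together with the tail bound (\ref{tailx}) and the growth of $a(n)$, the only cosmetic difference being that the paper bounds $P_0[\max>\epsilon]$ by $1-(1-p)^n$ with $p=P_0[X_{\tau_2}-X_{\tau_1}>\epsilon\,a(n)]$ while you use the cruder union bound $np$, which leads to the same verification that $n/a(n)^\delta\to 0$.
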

    \begin{proof}
      For every $\epsilon>0,$
     \begin{eqnarray*}
        P_0\left[\max_{1\le m\le
            n}\frac{X_{\tau_{m+1}}-X_{\tau_m}}{a(n)}>\epsilon\right]&\le&
        1-\left(1-P_0 \left[X_{\tau_2}-X_{\tau_1}>\epsilon
            a(n)\right]\right)^n\\ 
        &\overset{(\ref{tailx})}{\le}&
        1-\left(1-\frac{2C_1}{(\epsilon a(n))^{\delta}}\right)^n\to
        0\ \ \text{as }n\to\infty.
      \end{eqnarray*}
    \end{proof}
    By Theorem~\ref{lin} we obtain
    $(a(n))^{-1}(X_{[n\cdot]+\tau_1}-v[n\cdot]-X_{\tau_1})\overset{M_1}
    {\Rightarrow}
    \mu^{-2/\delta}Y(\cdot)$ as $n\to\infty$. Since
    $\tau_1$ is a.s.\ finite and $X$ moves in unit steps, we conclude
    that 
    \[\frac{X_{[n\cdot]}-v[n\cdot]}{a(n)}\overset{M_1}
    {\Rightarrow} \mu^{-2/\delta}Y(\cdot).\] Parts (iii) and (iv) of
    Theorem~\ref{llth} identify the one-dimensional distribution of
    the limiting stable process. If $\delta=4$ the process $Y$ is a
    constant multiple of the standard Brownian motion and, therefore,
    is continuous. By (6.6), for this case we can replace
    $M_1$- with $J_1$-convergence. This completes the proof of
    Theorem~\ref{2up}.
\end{proof}
Finally we quote a functional limit theorem in a different spirit, which can be found in \cite{RS}. Here $\PP$ is not fixed but scaled as well. Roughly speaking, the cookies are being diluted, becoming more and more like placebo cookies while the total drift stored in a cookie stack remains the same.
\begin{theorem}\label{eb}{\bf ($d=1$, excited Brownian motion as limit of ERW \cite[Th.\ 1.4]{RS})}
Assume that $\varphi:\R\to\R$ is Lipschitz continuous and bounded by some finite constant $C$. Define for all $ k\ge C, k\in\N,$ a spatially homogeneous environment $\om_k\in\Om$ by setting for all $z\in\Z, i\in\N,$
\[\om_k(z,1,i)=1-\om_k(z,-1,i)=\frac 12\left(1+\frac 1{2k}\varphi\left(\frac i{2k}\right)\right)\]
and let $(X_{k,n})_{n\ge 0}$ be an ERW in the deterministic environment $\om_k$. 
Furthermore, let $(Y(t))_{t\ge 0}$ be the solution, called {\em excited Brownian motion}, to the stochastic differential equation
\[dY(t)=dB(t)+\varphi(L(t,Y(t)))\ dt,\quad Y_0=0,\]
where 
$L$ is the local time process of $Y$.
Then as $k\to\infty$,
\[\frac{X_{k,[4k^2\cdot]}}{2k}\stackrel{J_1}{\Rightarrow}Y(\cdot) .\]
\end{theorem}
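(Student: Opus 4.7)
The plan is to write $X_{k,n}$ as martingale plus accumulated drift, rescale diffusively, prove tightness of the rescaled paths, and identify each subsequential limit as the unique solution of the SDE defining excited Brownian motion. Throughout let $\widetilde X_k(t):=X_{k,[4k^2t]}/(2k)$, $i_j:=\#\{m\le j\mid X_{k,m}=X_{k,j}\}$, and $\Phi(u):=\int_0^u\varphi(s)\,ds$. Since $2\om_k(X_{k,j},1,i_j)-1=\varphi(i_j/(2k))/(2k)$, the Doob--Meyer decomposition of the submartingale $X_{k,\cdot}$ gives $X_{k,n}=M^{(k)}_n+D^{(k)}_n$ with
\[D^{(k)}_n=\sum_{j=0}^{n-1}\tfrac{1}{2k}\varphi(i_j/(2k)),\qquad\langle M^{(k)}\rangle_n=n(1-O(k^{-2})).\]

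Tightness of $(\widetilde X_k)_{k\ge C}$ in $(D[0,\infty),J_1)$ is straightforward: $|\varphi|\le C$ gives $|D^{(k)}_n|\le Cn/(2k)$, so $\widetilde D_k(t):=D^{(k)}_{[4k^2t]}/(2k)$ is uniformly $C$-Lipschitz in $t$; and the bounded-increment martingale $\widetilde M_k(t):=M^{(k)}_{[4k^2t]}/(2k)$ has quadratic variation $t(1+o(1))$, so the martingale functional CLT yields $\widetilde M_k\overset{J_1}{\Rightarrow}B$ on each finite horizon. Hence $(\widetilde M_k,\widetilde D_k,\widetilde X_k)$ is tight and, along any subsequence, converges jointly to $(B,A,Y)$ with $Y=B+A$ and $A$ continuous of finite variation.

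To identify $A$, reorganize the drift by sites. Let $V^{(k)}_n(z):=\#\{j<n\mid X_{k,j}=z\}$ and let $\ell_k(t,y):=V^{(k)}_{[4k^2t]}([2ky])/(2k)$ be the rescaled occupation density. Summing in $z$ and replacing the inner Riemann sum by its integral gives
\[\widetilde D_k(t)=\sum_z\frac{1}{4k^2}\sum_{i=1}^{V^{(k)}_{[4k^2t]}(z)}\varphi(i/(2k))=\int_\R\Phi(\ell_k(t,y))\,dy+o(1),\]
the error being at most $\|\varphi\|_\infty/(2k)$ per visited site. Granting joint convergence $(\widetilde X_k,\ell_k)\Rightarrow(Y,L)$ with $L$ the semimartingale local time of $Y$, continuity of $\Phi$ and dominated convergence give $A(t)=\int_\R\Phi(L(t,y))\,dy$, and a standard application of the occupation-time formula to $g(y,s):=\varphi(L(s,y))$ yields $\int_\R\Phi(L(t,y))\,dy=\int_0^t\varphi(L(s,Y(s)))\,ds$. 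Thus $Y$ solves $dY=dB+\varphi(L(t,Y(t)))\,dt$, and pathwise uniqueness for this SDE, established in \cite{RS11,RS}, forces $Y$ to be excited Brownian motion and eliminates the need to pass to subsequences.

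The main obstacle is the joint convergence of the rescaled occupation densities $\ell_k$ to $L$. For the underlying symmetric simple random walk (the case $\varphi\equiv 0$) this is classical Ray--Knight material. The excitation changes one-step transition probabilities by only $O(1/k)$, so a natural route is to couple $X_{k,\cdot}$ step by step to a symmetric random walk $\overline X^{(k)}$, bound the discrepancy on time scales of order $k^2$ uniformly in $k$, and transfer the local-time convergence from $\overline X^{(k)}$ to $X_{k,\cdot}$. A second, more technical point is the uniform spatial regularity of $\ell_k(t,\cdot)$ needed to turn the discrete sum into the integral uniformly on compact sets; Ray--Knight-type modulus-of-continuity estimates for random-walk local times should suffice, and an additional compactness argument should provide joint tightness of $(\widetilde X_k,\ell_k)$ in a Polish space of paths paired with Hölder-continuous occupation densities.
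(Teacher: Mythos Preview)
The paper does not give its own proof of this theorem; it is quoted as a result from Raimond and Schapira \cite{RS} without argument. So there is nothing to compare against directly, and I can only comment on your proposal on its own merits.

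Your outline follows the natural route---Doob--Meyer decomposition, tightness via the martingale FCLT plus the Lipschitz bound on the rescaled drift, rewriting the accumulated drift as $\int_\R\Phi(\ell_k(t,y))\,dy$, and identifying the limit via pathwise uniqueness for the limiting SDE. The identity $\int_\R\Phi(L(t,y))\,dy=\int_0^t\varphi(L(s,Y_s))\,ds$ is correct: since $s\mapsto L(s,y)$ is continuous and nondecreasing, $\int_0^t\varphi(L(s,y))\,d_sL(s,y)=\Phi(L(t,y))$ for each $y$, and integrating in $y$ together with the (time-dependent) occupation-time formula gives the claim.

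There is, however, a genuine gap at the step you yourself flag. The step-by-step coupling you propose does not deliver what is needed: over $4k^2$ steps the ERW and the coupled simple walk disagree on order $k$ steps, so after rescaling by $1/(2k)$ the two paths differ by an $O(1)$ quantity---this discrepancy is precisely the drift $\widetilde D_k$. Two processes that differ by an $O(1)$ finite-variation term can have local-time fields that differ by $O(1)$ as well, so you cannot simply transfer local-time convergence from the simple walk to $X_{k,\cdot}$. What is actually required is a direct tightness argument for $(\ell_k(t,\cdot))_k$ in a space of continuous functions of $y$ (uniform spatial modulus of continuity for the rescaled occupation counts of the ERW itself), together with the verification that any joint subsequential limit $(Y,L)$ satisfies the occupation-time identity $\int_0^t g(Y_s)\,ds=\int_\R g(y)L(t,y)\,dy$, which pins $L$ down as the semimartingale local time of $Y$. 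This is essentially how \cite{RS} proceed, and it is substantially more work than the coupling sketch suggests.
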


\subsection{Results for $d\ge 1$.
}\label{flt2up}
The law of large numbers considered in Theorem \ref{bal2} 
can be complemented by an averaged central limit theorem.
\begin{theorem}\label{bwl} {\bf ({$d\ge 2$, convergence to Brownian motion \cite[Th.\ 1 (ii)]{BR07}, \cite[Th.\ 1.2 (ii)]{MPRV}})}
Let $\ell\in \R^d\backslash\{0\}$ and assume {\rm (MPRV$_\ell$), (IID)} and {\rm (UEL)}.
Then there exists a non-degenerate
    $d\times d$ matrix $G$ such that with respect
    to $P_0$, 
\[\frac{X_{[n\cdot]}- [n\cdot]
    v}{\sqrt{n}}\overset{J_1}{\Rightarrow} B_G(\cdot)\quad\text{
    as $n\to \infty$},
\]
where $B_G$ is the $d$-dimensional Brownian motion
    with covariance matrix $G$.
 \end{theorem}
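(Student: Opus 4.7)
The plan is to combine the regeneration structure of Lemma \ref{rs} with second-moment estimates on regeneration blocks, apply a multidimensional Donsker theorem to the i.i.d.\ blocks, and then transfer the resulting invariance principle from the regeneration-indexed walk back to the original time-indexed walk. First I would note that under {\rm (MPRV$_\ell$)} the condition {\rm (POS$_\ell$)} holds with $\delta\cdot\ell>0$, so Theorem \ref{rt2} gives $P_0[A_\ell]=1$. Hence Lemma \ref{rs} applies and the blocks
\[
\Delta_k:=\bigl(X_{\tau_{k+1}}-X_{\tau_k},\ \tau_{k+1}-\tau_k\bigr),\quad k\ge 1,
\]
form an i.i.d.\ sequence of $\Z^d\times\N$-valued random vectors. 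Theorem \ref{bal2} provides $E_0[\tau_2-\tau_1]<\infty$ and identifies the speed as $v=E_0[X_{\tau_2}-X_{\tau_1}]/E_0[\tau_2-\tau_1]$.

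The main obstacle will be upgrading the first-moment control of Theorem \ref{bal2} to the $L^2$ bounds $E_0[|X_{\tau_2}-X_{\tau_1}|^2]<\infty$ and $E_0[(\tau_2-\tau_1)^2]<\infty$. For this I would follow the strategy of \cite{MPRV}: combine the range lower bound \cite[Prop.\ 4.1]{MPRV} with the martingale arguments underlying Theorem \ref{rt2}. Because {\rm (MPRV$_\ell$)} forces every cookie on a freshly visited site to induce a drift of at least $\la$ in direction $\ell$, a linear-in-time lower bound on the range translates into a ballistic lower bound on $X_n\cdot\ell$ with good control on the undershoot, and combined with {\rm (UEL)} to control backtracking this should give tail bounds on $\tau_2-\tau_1$ that decay faster than any polynomial and in particular yield the required second moments.

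Given these moment bounds, the multidimensional Donsker theorem applied to the centred i.i.d.\ vectors $\Delta_k-E_0[\Delta_k]$ produces a joint Brownian limit for $(X_{\tau_{[n\cdot]+1}},\tau_{[n\cdot]+1})$ after centering and $\sqrt{n}$-scaling. A standard time-change argument as in \cite[Section 3.2]{Zei04} then converts this into the stated invariance principle: write $[n\cdot]=\tau_{K(n\cdot)}+R(n\cdot)$, where $K(t)$ counts regeneration times up to $t$; control the remainder $R$ uniformly on compacts by a maximal inequality built from the $L^2$-bound on $\tau_2-\tau_1$; and invoke continuity of the inverse time-change at the continuous limit to promote the convergence to $J_1$-convergence of $(X_{[n\cdot]}-[n\cdot]v)/\sqrt{n}$ towards a Brownian motion with covariance matrix
\[
G=\frac{1}{E_0[\tau_2-\tau_1]}\,\mathrm{Cov}_0\bigl(X_{\tau_2}-X_{\tau_1}-v(\tau_2-\tau_1)\bigr).
\]
Finally, non-degeneracy of $G$ should follow from {\rm (UEL)}: the uniform ellipticity constant $\kappa$ ensures that every nearest-neighbor step at every site occurs with positive probability, so that for any unit vector $u\in\R^d$ the variance of $(X_{\tau_2}-X_{\tau_1}-v(\tau_2-\tau_1))\cdot u$ is strictly positive, forcing $G$ to be positive definite.
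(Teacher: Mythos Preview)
The paper does not prove this theorem: it is stated as a survey result and attributed to \cite[Th.\ 1 (ii)]{BR07} and \cite[Th.\ 1.2 (ii)]{MPRV}, with no proof or sketch given in the text. Your proposal is therefore not competing with anything in the paper itself.

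That said, your outline is the correct one and matches the strategy of the cited references: transience via Theorem~\ref{rt2}, regeneration structure from Lemma~\ref{rs}, moment bounds on the regeneration blocks, a multidimensional Donsker theorem for the i.i.d.\ increments, and a time-change to pass from regeneration indices to real time. The identification of $G$ and the non-degeneracy argument via {\rm (UEL)} are also right. The one place where your sketch is optimistic is the claim that the tails of $\tau_2-\tau_1$ ``decay faster than any polynomial'': in \cite{BR07} the bound obtained is polynomial of sufficiently high order to give a second moment, while \cite{MPRV} proves finiteness of all moments; either suffices, but the actual arguments (range lower bounds combined with martingale estimates) are the substantial part of those papers and not something that follows easily from what is written in the present survey. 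If you were to write a full proof here you would essentially be reproducing the core of \cite{MPRV}.
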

A central limit theorem in the original model (BW) was also obtained in \cite[Th.\ 2.2]{vdHH12} by the expansion technique mentioned in Remarks \ref{exp} and \ref{pnc}.
\begin{pe}\label{pee}{\rm ($d\ge 1$, {\bf quenched CLT}) Find measures $\PP$ (not including those which model RWRE)
such that for $\PP$-almost all $\om$ the distribution  under $P_{0,\om}$ of $T_n$ or $X_n$, properly centered and normalized as needed, converges weakly to some non-degenerate distribution as $n\to\infty$. 

(a) Here is a somewhat degenerate example for $d=1$: Assume (IID), $\si^2:=\EE[\om(0,1,1)$
$\om(0,-1,1)]>0$ and let $\PP$-a.s.\ $\om(0,1,i)=1$ for $i\in\{2,3\}$. It is not difficult to see that for $\PP$-almost all $\om$  the random variables $\Delta_k:=T_{k+1}-T_{k},$ $k\ge 0,$ are  independent under $P_{0,\om}$ with $P_{0,\om}[\Delta_k=1]=\om(k,1,1)$ and
$P_{0,\om}[\Delta_k=3]=\om(k,-1,1)$ for $k\ge 1$ and $P_{0,\om}[T_1=2j+1]=\om(-j,1,1)\prod_{k=0}^{j-1}\om(-k,-1,1)$ for $j\in\N_0$. 
By the Lindeberg-Feller central limit theorem (see e.g.\ \cite[Th.\ 3.4.5]{Du} with $X_{n,k}:=(\Delta_k-E_{0,\om}[\Delta_k])/\sqrt{n}$) we have $\PP$-a.s.\ the following convergence in $P_{0,\om}$-distribution:
\[\frac{T_n-E_{0,\om}[T_n]}{2\si\sqrt{n}}\ \Rightarrow\ Z\quad\mbox{as $n\to\infty$, where $Z\sim\mathcal N(0,1)$}.\]
Note that here the centering is random, i.e.\ depends on $\om$.

(b) Consider measures $\PP$ under which not only the stacks of cookies are i.i.d.\ like in (IID) but also the cookies within the stacks, i.e.\ under which the $2d$-valued vectors $\om(z,\cdot,i), z\in\Z^d, i\in\N,$ are i.i.d.. Obviously, under the averaged measure $P_0$ this ERW has the same distribution as a standard RW with i.i.d.\ increments distributed according to $(\EE[(\om(x,e,i)])_{e\in\mathcal E}$ and thus satisfies a central limit theorem, unless in degenerate cases. Nevertheless it is not obvious that 
the distribution of $X$ under $P_{x,\om}$ satisfies for $\PP$-almost all $\om$  a central limit theorem as well. A similar problem  has been intensively studied in the setting of RWs in space-time random environments, see e.g.\ \cite{DL09} and the references therein. The difference between this well-established model and the new model proposed here is that in the new model the transition probabilities would not depend on the absolute time but on the local time.
}
\end{pe}

\section{Further limit theorems}
In \cite{R04} a large deviation principle for a large class of self-interacting RWs on $\Z^d, d\ge 1$, including some ERWs, is proved.  A stronger result for  ERWs in the setting  discussed in Section \ref{1.4}, whose proof uses branching processes, is given in \cite{P}. 
\begin{theorem}{\bf ($d=1$, large deviation principle under the
    averaged measure \cite{R04}, \cite{P})} Assume {\rm (IID), (BD)} and {\rm (WEL)}. Then there is a continuous, convex function $I:[-1,1]\to[0,\infty)$ such that for any Borel set $B\subseteq [-1,1]$,
\[-\inf_{x\in B^\circ}I(x)\leq  \liminf_{n\to\infty}P_0\left[\frac{X_n}{n}\in B\right]\leq  \limsup_{n\to\infty}P_0\left[\frac{X_n}{n}\in B\right]\le -\inf_{x\in \bar B}I(x),\]
where $B^\circ$ denotes the interior and $\bar B$ the closure of $B$. Moreover, if $\delta>2$ and $v$ denotes the speed of the walk then $I(x)=0$ for all $x\in[0,v]$. More precisely, for all $x\in(0,v)$,
\[\lim_{n\to\infty}\frac{\log P_0[X_n/n<x]}{\log n}=1-\frac{\delta}{2}.\]
\end{theorem}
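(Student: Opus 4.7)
The plan is to prove the result in two stages: first, establish the LDP using the general framework of \cite{R04} for self-interacting random walks; second, refine the description of $I$ on $[0,v]$ via the branching-process/regeneration techniques of \cite{P} underlying the tail estimates of Section \ref{sbal}.

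For the LDP itself, under (IID), (BD) and (WEL) the ERW is a self-interacting walk of the type handled by \cite{R04}, which yields a process-level LDP and, by contraction, an LDP for $X_n/n$ at rate $n$ with a convex rate function $I : [-1,1] \to [0,\infty]$. Finiteness of $I$ on all of $[-1,1]$ follows from (WEL) combined with (BD): for any target velocity $x \in (-1,1)$ one can assemble trajectories of length $n$ with empirical velocity within $\epsilon$ of $x$ and $P_0$-probability at least $e^{-Cn}$, using straight right- and left-runs in proportions $(1+x)/2$ and $(1-x)/2$ (after the first $M$ visits a site produces uniform $\pm 1$ transitions, so each segment has exponentially small but positive probability). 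Continuity of $I$ on $[-1,1]$ then follows from convexity combined with pointwise finiteness.

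For the assertion $I \equiv 0$ on $[0,v]$ and the finer polynomial rate, we invoke the regeneration structure (Lemma \ref{rs}) and the tail estimate (\ref{tailt}) from Corollary \ref{halifax}, namely $P_0[\tau_2 - \tau_1 > t] \sim C_3 t^{-\delta/2}$. Fix $x \in (0, v)$. Up to $O(1)$ boundary effects, the event $\{X_n < nx\}$ equals $\{T_{[nx]} > n\}$. The regeneration structure decomposes $T_{[nx]}$ as the sum of $\tau_1$ and a random number of i.i.d.\ cycle lengths $(\tau_{i+1} - \tau_i)_{i \geq 1}$. Since $\delta > 2$, each cycle length has finite mean by (\ref{tailt}), and by (\ref{rudi}) the typical value of $T_{[nx]}$ is approximately $nx/v$, strictly less than $n$. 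Thus $\{T_{[nx]} > n\}$ is a large deviation for a heavy-tailed renewal sum whose target exceeds the mean. The one-big-jump principle for positive i.i.d.\ random variables with regularly varying tails yields
\[
P_0[T_{[nx]} > n] \sim [nx] \cdot P_0[\tau_2 - \tau_1 > n(1 - x/v)] \sim c_{x}\, n^{1 - \delta/2},
\]
and taking logarithms divided by $\log n$ delivers the displayed limit $1 - \delta/2$. In particular the decay is sub-exponential throughout $[0,v)$, forcing $I \equiv 0$ there by the LDP upper bound; extension to the endpoints $0$ and $v$ follows from continuity of $I$ together with the law of large numbers.

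The main obstacle will be making the one-big-jump asymptotic rigorous in this heavy-tailed setting. The upper bound uses truncation combined with a union bound, with the bulk of the sum controlled by Chebyshev or a weak LLN (finite mean holds for $\delta > 2$ and finite variance for $\delta > 4$; the intermediate range $\delta \in (2,4]$ requires the classical heavy-tailed toolkit). The matching lower bound requires an explicit construction of the event on which exactly one cycle is anomalously long, contributing the $n^{-\delta/2}$ factor, while the remaining $[nx]-1$ cycles sum to approximately $[nx]/v$ by the weak LLN. Treating the non-stationary first cycle $\tau_1$ and the randomness of the number of cycles needed to reach level $[nx]$ are standard renewal-theory complications, handled using that $\tau_1$ is $P_0$-a.s.\ finite by transience (Theorem \ref{rt}).
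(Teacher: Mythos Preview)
The paper does not supply its own proof of this theorem; it is stated as a result from \cite{R04} and \cite{P} and is followed only by the remark ``For further properties of the rate function $I$ see \cite[Lem.\ 5.1]{P}.'' So there is no proof in the paper to compare your proposal against. Your outline is broadly in the spirit of the arguments in \cite{P}: the LDP comes from a general framework (though in \cite{P} the LDP for $X_n/n$ is derived via an LDP for the hitting times, not by contraction from \cite{R04}), and the slowdown estimate comes from the regeneration structure together with the tail asymptotics of Corollary~\ref{halifax}.

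A few points in your sketch would need tightening before it stands on its own. First, the identification of $\{X_n<nx\}$ with $\{T_{[nx]}>n\}$ is not an $O(1)$ matter: $T_{[nx]}>n$ is equivalent to $\max_{i\le n}X_i<[nx]$, not to $X_n<nx$, and controlling the gap requires a separate argument that large backtracking from the running maximum is polynomially negligible. Second, $T_{[nx]}$ is not a sum of a deterministic number of i.i.d.\ regeneration increments; the number of regenerations needed to reach level $[nx]$ is itself random (approximately $[nx]/E_0[X_{\tau_2}-X_{\tau_1}]$, not $[nx]$), and although you flag this as a ``standard complication'' it does enter the constant in the one-big-jump heuristic and must be handled to get the exponent right. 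Third, your finiteness argument for $I$ on $(-1,1)$ relies on building straight runs, but under {\rm (WEL)} alone there is no guarantee that in a $\PP$-typical environment both $\omega(z,1,i)>0$ and $\omega(z,-1,i)>0$ hold at every $(z,i)$; one needs the boundedness {\rm (BD)} to reach placebo cookies after at most $M$ visits, and the argument should be phrased accordingly.
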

For further properties of the rate function $I$ see \cite[Lem.\ 5.1]{P}.

The maximum local time of transient ERW under the conditions of Section \ref{1.4}
is considered in \cite{RR}. In particular, the following is shown.
\begin{theorem}{\bf ($d=1$, maximum local time \cite[Th.\ 1.2 (ii), Cor.\ 1.4]{RR})}\\
Assume {\rm (IID), (BD)} and {\rm (WEL)} and let $\delta>1$. Denote by
$\xi_n^*:=\max_{x\in\Z}\#\{i\in\{0,\ldots, n\}\mid X_i=n\}$
the largest number of visits to a single site by time $n$. Then $P_0$-a.s.\ for all $\al>1/\delta$, 
\[\lim_{n\to\infty}\frac{\xi_n^*}{n^{1/(2\vee\delta)}\ (\log n)^\al}=0\quad\mbox{and}\quad
\lim_{n\to\infty}\frac{(\log n)^\al\ \xi_n^*}{n^{1/(2\vee\delta)}}=\infty.\]
\end{theorem}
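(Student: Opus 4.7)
The plan is to leverage the branching-process embedding from the sketch of Theorem \ref{bal}. By Remark \ref{d0} we may assume $\delta>1$, so the ERW is transient to the right (Theorem \ref{rt}). The crucial observation is that the zeros of the branching process $V$ correspond exactly to the regeneration sites of the ERW (Lemma \ref{rs}): indeed, $V_{N-x}=J_{N,x}^\downarrow=0$ iff the walker, after first arriving at $x$, never backtracks before time $T_N$, which by transience is the definition of $x$ being a regeneration site. Using (\ref{st}) and the distributional identity $(J^\downarrow_{N,N},\dots,J^\downarrow_{N,0})\stackrel{\mathrm{d}}{=}(V_0,\dots,V_N)$, the local time at site $x$ equals $2V_{N-x}+O(1)$ up to a neighbour term, so
\[
\xi_n^*\ =\ O(1)\,+\,2\max_{1\le k\le R_n+1}M^V_k \qquad P_0\text{-a.s.,}
\]
where $R_n:=\sup\{k\ge 0\mid\tau_k\le n\}$ is the number of completed regeneration blocks by time $n$ and $M^V_k$ is the maximum of $V$ during its $k$th excursion from $0$.

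The key analytic input is the single-excursion tail
\[
P_0[M^V>m]\ \asymp\ m^{-\delta}\qquad\text{as }m\to\infty.
\]
Heuristically this follows from the Bessel-type diffusion $Y$ with $dY=(1-\delta)\,dt+\sqrt{2Y}\,dB$ approximating $V$: since $Y$ is self-similar with exponent one, an excursion of $Y$ reaching height $m$ has length $O(m)$ (not $O(m^2)$), consistent with the $t^{-\delta}$ tail of $\sigma^V$ from Theorem \ref{X}, and the scale function $s(y)=y^\delta$ gives $P_y[Y\text{ hits }m\text{ before }0]=(y/m)^\delta$. To transfer this to the discrete process $V$, I would exhibit a martingale $h(V_k)$ with $h(v)\sim v^\delta$ as $v\to\infty$, apply optional stopping at $\inf\{k:V_k\ge m\}\wedge\sigma^V$, and control correction terms using the drift $E_0[V_{k+1}-V_k\mid V_k]=1-\delta$ for $V_k\ge M$, the conditional variance $\mathrm{Var}(V_{k+1}\mid V_k)\sim 2V_k$ read off from (\ref{defV}), and (WEL), which guarantees $P_0[V_1\ge 1]>0$ so that a positive fraction of excursions are nondegenerate.

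Granted this tail, an extreme-value argument finishes. The $M^V_k$ are i.i.d.\ (by the regeneration structure of $V$) with tail $m^{-\delta}$, so the first and second Borel--Cantelli lemmas yield, $P_0$-a.s.\ for every $\alpha>1/\delta$,
\[
K^{1/\delta}/(\log K)^\alpha\ \le\ \max_{1\le k\le K}M^V_k\ \le\ K^{1/\delta}(\log K)^\alpha\quad\text{eventually in }K.
\]
It then remains to substitute $K=R_n$. For $\delta>2$, Corollary \ref{halifax} implies that $\tau_2-\tau_1$ has finite mean, whence the strong law yields $R_n/n\to 1/E_0[\tau_2-\tau_1]>0$ $P_0$-a.s., so $R_n^{1/\delta}\sim \mathrm{const}\cdot n^{1/\delta}$. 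For $\delta\in(1,2)$, $\tau_2-\tau_1$ lies in the domain of attraction of a positive stable law of index $\delta/2\in(1/2,1)$, and Chung-type iterated-logarithm estimates give $P_0$-a.s.\ $R_n=n^{\delta/2}\cdot(\log\log n)^{\pm O(1)}$, which is subpolylogarithmic, so $R_n^{1/\delta}\asymp n^{1/2}$ up to $(\log n)^{o(1)}$. In both regimes substituting and absorbing the log-log losses into the $(\log n)^\alpha$ corrections gives the announced conclusion; the critical case $\delta=2$, where $R_n\sim n/\log n$ by Theorem \ref{llth}(\ref{ii}), needs slightly sharper concentration than the crude Borel--Cantelli bound to match every $\alpha>1/2$ on the lower side.

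The hardest step is the sharp tail $P_0[M^V>m]\asymp m^{-\delta}$. The theorem sits exactly at the Borel--Cantelli threshold for tails of this order, so even an $\varepsilon$-loss in the exponent would force a strictly larger $\alpha$ and destroy the conclusion for $\alpha\in(1/\delta,1/\delta+\varepsilon)$. This calls either for a careful discrete optional-stopping computation controlling the non-Geometric first-$M$ offspring corrections in (\ref{defV}), or for a uniform-error refinement of the diffusion approximation underlying Theorem \ref{X} strong enough to preserve the precise power-law tail of the excursion-maximum of $V$.
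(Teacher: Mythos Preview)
The paper does not actually prove this theorem: it is quoted from \cite{RR}, and the only comment offered is that ``the proof uses branching processes.'' So there is no detailed argument in the paper to compare against; your branching-process approach is precisely the method the paper attributes to \cite{RR}, and the outline---reduce $\xi_n^*$ to excursion maxima $M^V_k$ of the process $V$, establish $P_0[M^V>m]\asymp m^{-\delta}$, then run an extreme-value/Borel--Cantelli argument along $R_n$ excursions---is the right strategy.

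A few points deserve care. First, the identity $(J^\downarrow_{N,N},\dots,J^\downarrow_{N,0})\stackrel{d}{=}(V_0,\dots,V_N)$ is only distributional, whereas you use it pathwise when matching zeros of $V$ to regeneration sites and writing $\xi_n^*$ in terms of $M^V_k$; this is easily repaired by working directly with $\tilde V_k:=J^\downarrow_{N,N-k}$, which has the same law as $V$ and for which the zeros \emph{are} the regeneration sites. Second, for the upper bound you do not need any control on $R_n$ beyond the trivial $R_n\le n$: choosing $\alpha''\in(1/\delta,\alpha)$ in the extreme-value bound already gives $\xi_n^*\le C\,R_n^{1/\delta}(\log R_n)^{\alpha''}\le C\,n^{1/\delta}(\log n)^{\alpha''}=o(n^{1/(2\vee\delta)}(\log n)^\alpha)$ once you observe that $R_n^{1/\delta}\le n^{1/(2\vee\delta)}$ holds automatically (trivially for $\delta\ge 2$; for $\delta<2$ because $R_n\le S_n+1$ and $S_n\le n$ still gives $R_n^{1/\delta}\le n^{1/\delta}\le n^{1/2}$---actually one needs $R_n\lesssim n^{\delta/2}$ here, which does require the a.s.\ upper LIL for the stable renewal process). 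Third, and this is the real content beyond the $M^V$ tail: for $\delta\in(1,2]$ the lower bound genuinely needs an a.s.\ lower bound on $R_n$ of the form $R_n\ge n^{\delta/2}/(\log n)^{o(1)}$. The ``Chung-type'' estimates you invoke do exist for heavy-tailed renewal processes (they amount to one-sided LILs for stable subordinators), but they should be cited explicitly; for $\delta=2$ in particular, where the inter-regeneration tail is $\sim C t^{-1}$, you are at the boundary of the stable regime and the fluctuations of $R_n$ around $n/\log n$ need to be controlled to within a power of $\log\log n$, which is more delicate than you indicate. Your identification of the sharp tail $P_0[M^V>m]\asymp m^{-\delta}$ as the crux is correct, and your proposed Lyapunov-function/optional-stopping route is the standard way to get it.
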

Thus  $(\xi_n^*)_{n\ge 0}$ behaves  in the non-ballistic transient regime  $\delta\in (1,2]$ similarly to its counterpart for the simple symmetric RW whereas in the ballistic regime $\delta>2$ the exact value of $\delta$ plays an important role.
The proof uses branching processes.\vspace*{3mm}

{\bf Acknowledgment:} E. Kosygina would like to thank Thomas Mountford
for discussions.  Her work was partially supported by
the CUNY Research Foundation, PSC-CUNY award \# 64603-00-42, and by
the Simons Foundation, Collaboration Grant in Mathematics 209493.  M.\
Zerner's work was supported by the European Research
Council, Starting Grant 208417-NCIRW.


\vspace*{3mm}

{\sc \small
\begin{tabular}{ll}
Department of Mathematics& \hspace*{20mm}Mathematisches Institut\\
Baruch College, Box B6-230& \hspace*{20mm}Universit\"at T\"ubingen\\
One Bernard Baruch Way&\hspace*{20mm}Auf der Morgenstelle 10\\
New York, NY 10010, USA&\hspace*{20mm}72076 T\"ubingen, Germany\\
{\verb+elena.kosygina@baruch.cuny.edu+}& \hspace*{20mm}{\verb+martin.zerner@uni-tuebingen.de+}
\end{tabular}
}
\end{document}